\newcommand{\E}{\mathbb{E}}
\newcommand{\Var}{\mathrm{Var}}
\newcommand{\Prob}{\mathbb{P}}
\newcommand{\Cov}{\mathrm{Cov}}
\newcommand{\Yione}{Y_i(1)}
\newcommand{\Yizero}{Y_i(0)}
\newcommand{\T}{\text{T}}
\newcommand{\taui}{\tau_i}
\newcommand{\taubar}{\widehat{\widebar{\tau}}_{\text{avg}}}
\newcommand{\ate}{\bar{\tau}}
\newcommand{\sigmafield}{\mathcal{F}_{i-1}}
\newcommand{\snipw}{\widehat{\tau}_{\textsc{IPW}}}
\newcommand{\snaipw}{\widehat{\tau}_{\textsc{AIPW}}}
\newcommand{\tnaipw}{\widehat{\psi}_{\textsc{AIPW}}}
\newcommand{\vipw}{\mathrm{V}^{\mathrm{IPW}}}
\newcommand{\vaipw}{\mathrm{V}^{\mathrm{AIPW}}}
\newcommand{\vipwstrong}{\mathrm{V}^{\text{IPW}}_{\text{strong}}}
\newcommand{\vipwweak}{\mathrm{V}^{\text{IPW}}_{\text{weak}}}
\newcommand{\vaipwstrong}{\mathrm{V}^{\text{AIPW}}_{\text{strong}}}
\newcommand{\vaipwweak}{\mathrm{V}^{\text{AIPW}}_{\text{weak}}}
\newcommand{\vipwefron}{V^{\text{IPW}}_{\text{Efron}}}
\newcommand{\vaipwefron}{V^{\text{AIPW}}_{\text{Efron}}}
\newcommand{\vipwwei}{V^{\text{IPW}}_{\text{Wei}}}
\newcommand{\vaipwwei}{V^{\text{AIPW}}_{\text{Wei}}}
\newcommand{\ze}{\zeta}
\newcommand{\vipwstronghat}{V^{\text{IPW}}_{\text{strong}}}
\newcommand{\vipwweakhat}{V^{\text{IPW}}_{\text{weak}}}
\newcommand{\vaipwstronghat}{V^{\text{AIPW}}_{\text{strong}}}
\newcommand{\vaipwweakhat}{V^{\text{AIPW}}_{\text{weak}}}
\newcommand{\widetildepest}{\overline{p}}
\newcommand{\m}{m}
\newcommand{\Ybar}{\widebar{Y}}
\newcommand{\Yvar}{\sigma^2}
\newcommand{\Ycov}{\sigma_{01}} 
\newcommand{\moment}{m^2}
\newcommand{\crossmoment}{m_{01}}
\newcommand{\pstar}{p^\star}
\newcommand{\ponestar}{p_1^\star}
\newcommand{\ptwostar}{p_2^\star} 
\newcommand{\mhat}{\widehat{m}}
\theoremstyle{plain}
\newtheorem{theorem}{Theorem}
\newtheorem{lemma}{Lemma}
\newtheorem{definition}{Definition}
\newtheorem{remark}{Remark}
\newtheorem{assumption}{Assumption}
\colorlet{shadecolor}{gray!10}
\colorlet{lightyellow}{yellow!10}
\title{\LARGE \textbf{Design Stability in Adaptive Experiments: \\ Implications for Treatment Effect Estimation}}
\author{
\large 
Saikat Sengupta$^{\dagger}$\\
\normalsize \href{mailto:mb2419@isical.ac.in}{mb2419@isical.ac.in} \\
\and
Koulik Khamaru$^{\ddagger}$\\
\normalsize \href{mailto:kk1241@stat.rutgers.edu}{kk1241@stat.rutgers.edu} \\
\and
Suvrojit Ghosh$^{\ddagger}$\\
\normalsize \href{mailto:sg1565@scarletmail.rutgers.edu}{sg1565@scarletmail.rutgers.edu} \\
\and
Tirthankar Dasgupta$^{\ddagger}$\\
\normalsize \href{mailto:tirthankar.dasgupta@rutgers.edu}{tirthankar.dasgupta@rutgers.edu} \\[0.3cm]
\normalsize $^{\dagger}$ Indian Statistical Institute, Kolkata \quad
\normalsize $^{\ddagger}$ Department of Statistics, Rutgers University
}
\date{\today}
\begin{document}
\maketitle

\begin{abstract}
We study the problem of estimating the average treatment effect (\textsc{ATE}) under sequentially adaptive treatment assignment mechanisms. In contrast to classical completely randomized designs, we consider a setting in which the probability of assigning treatment to each experimental unit may depend on prior assignments and observed outcomes. Within the potential outcomes framework~\cite{Neyman1923}, we propose and analyze two natural estimators for the \textsc{ATE}: the inverse propensity weighted (IPW) estimator and an augmented IPW (AIPW) estimator. The cornerstone of our analysis is the concept of \emph{design stability}, which requires that as the number of units grows, either the assignment probabilities converge, or sample averages of the inverse propensity scores and of the inverse complement propensity scores converge in probability to fixed, non-random limits. Our main results establish central limit theorems for both the IPW and AIPW estimators under \emph{design stability} and provide explicit expressions for their asymptotic variances. We further propose estimators for these variances, enabling the construction of asymptotically valid confidence intervals. Finally, we illustrate our theoretical results in the context of Wei’s adaptive coin design~\cite{Wei1978-gd} and Efron’s biased coin design~\cite{Efron1971-yh}, highlighting the applicability of our results to sequential experimental designs with adaptive randomization.
\end{abstract}

\textbf{Keywords:} Average treatment effect; Sequential treatment assignment; Design stability; Adaptive designs; IPW estimator; AIPW estimator.

\section{Introduction}
\label{sec:intro}
Estimating the average treatment effect is a foundational problem in causal inference, especially when evaluating interventions in fields such as healthcare~\cite{Shen2020-zr}, education~\cite{Kaplan2016-ok}, public policy~\cite{Finkelstein2020-jw}, development economics~\cite{Imbens2004-nc}, and digital experimentation~\cite{Shi2023-at}. Traditional methods often assume simple randomized designs with independent and identically distributed (i.i.d.) units and fixed treatment assignment probabilities. However, many real-world experiments depart from this idealized setting: units often arrive sequentially, treatment assignments may adapt over time based on previous allocations, observed outcomes, or covariate information, and the study population is finite. Such sequential finite-population setups are common in adaptive clinical trials, online A/B testing, and policy evaluations. In these scenarios, adaptively assigning treatments can lead to complex dependencies among units, causing traditional ATE estimators to become biased or inefficient and undermining the applicability of conventional asymptotic results. Despite its practical importance, this setting remains methodologically less explored. 

This paper develops a general framework for ATE estimation and inference under sequential designs in finite populations. We study the asymptotic behavior of the average treatment effect estimators under a class of sequential Bernoulli assignment mechanisms, in which the probability of assigning treatment to unit $i$ may depend on the observed history up to that point. %Formally, $\Prob (K_i = 1 \mid \sigmafield) = p_i,$ $p_i \in \sigmafield,$
%\begin{align*}
%\Prob (K_i = 1 \mid \sigmafield) = p_i,  
%\end{align*} 
%where $K_i = 1$ indicates that the $i$th unit is assigned to treatment, and $\sigmafield$ denotes the sigma-field generated by the treatment--control assignments and outcomes of the first $(i-1)$ units. 
Two classic examples of such assignments are Wei's adaptive coin design~\cite{Wei1978-gd}, and Efron's biased coin design~\cite{Efron1971-yh}. More general classes of adaptive assignments in clinical trials can be found in \cite{bb691c50-25c9-325f-b1ee-0cc92167cbd1}. %In the former, the assignment probabilities are expressed as a non-increasing function of the relative imbalance until the previous step, whereas in the latter design these probabilities take constant values that depend on the treatment--control imbalance up to the previous step (for example, fixed assignment probabilities like $\eta$, (1-$\eta$), $\tfrac{1}{2}\bigr)$.

Our contributions proceed in four parts. First, we analyze the \emph{inverse propensity weighted} (IPW) estimator for the average treatment effect and propose an improvement via a finite-population analogue of its \emph{augmented version} (AIPW), commonly used in model-based frameworks. Second, for both estimators, we establish central limit theorems under general sequential designs that satisfy a newly defined property called \emph{design stability}. Third, under two different forms of \emph{design stability - strong and weak} - we derive estimators of the asymptotic variances of the average treatment effect estimators. These variance estimators, and the corresponding confidence intervals for the ATE, are conservative in that they are asymptotically positively biased, leading to asymptotic overcoverage of the confidence intervals. However, the biases vanish under certain forms of treatment effect homogeneity, yielding correct asymptotic coverage of the confidence intervals. Finally, we specialize these results to the two concrete experimental designs mentioned above, arguing that one of them (Wei's adaptive coin design~\cite{Wei1978-gd}) satisfies the \emph{strong design stability} condition, whereas the other (Efron's biased coin design~\cite{Efron1971-yh}) satisfies the \emph{weak design stability} condition.

The remainder of the paper is organized as follows. Section~\ref{sec:related_work} reviews relevant prior work. Section~\ref{sec:problem_desc} formally defines the problem, introduces the potential outcomes framework, describes the sequential assignment structure, and presents the estimators of interest. Section~\ref{sec:results} presents the main theoretical results, including central limit theorems for adaptive designs and conservative asymptotic variance estimators for confidence interval construction. These results are then specialized in Section~\ref{sec:application} to two widely used adaptive treatment assignment mechanisms. Sections~\ref{sec:wei} and~\ref{sec:efron} examine the \emph{stability} of these designs and present simulation studies illustrating the finite-sample performance of the proposed estimators and supporting the theoretical findings. Section~\ref{sec:clinical_trial} briefly discusses extensions to more general adaptive clinical trial designs, while Section~\ref{sec:discussion} concludes with a discussion and directions for future research. Proofs of all main and auxiliary results are provided in the Supplementary Material.

%%%%%%%%%%%%%%%%%%%%%%%%%%%%%%%%%%%%%%%%%%%%%%%%%%%%%%%%%%%%%%%%%%%%%%%%%%%%%%%%%%%%%%%%%%%%%%%%%%%%%%%%%%%%%%%%%%%%%%%%%%%%%%%%%%%%%%%%

\section{Related work}
\label{sec:related_work}
%\textcolor{red}{All citations in red in this section are incorrect citations.}
Causal inference in experimental settings is typically framed within two paradigms. The infinite superpopulation perspective views study units as random draws from an underlying population, with randomness arising from the data-generating process. In contrast, the finite-population or design-based perspective treats the set of units as fixed, with uncertainty introduced solely through the experimental design. This latter view, combined with the potential outcomes formulation, traces back to~\cite{Neyman1923}, who conceptualized each unit’s treatment and control responses as fixed quantities and attributed randomness entirely to randomization. While classical asymptotic theory~\cite[e.g.,][]{lehmann1999, vandervaart2000} is often aligned with the superpopulation framework, many applications, particularly randomized trials and survey sampling, are more naturally analyzed from the finite-population perspective~\cite[e.g.,][]{fisher1935, cochran1977, rosenbaum2002, imbens2015}.

The study of asymptotic normality in causal inference can be traced back to results on simple random sampling. Classical central limit theorems were established by~\cite{madow1948},~\cite{erdos1959},~\cite{hajek1960}, with convenient formulations presented in~\cite{lehmann1975, lehmann1999}. These sampling-based central limit theorems can also be viewed as special cases of the more general results for rank statistics~\cite{wald1944, Noether1949-br, Fraser1956-qf, hajek1961}. Further foundational work includes the theory of U-statistics developed by~\cite{Hoeffding1963-tc} and the weak convergence results of~\cite{1177692698}, which laid the groundwork for modern asymptotic theory in survey sampling and experimental design. Because treatment and control groups in randomized experiments correspond to simple random samples from the finite set of experimental units, these sampling-based central limit theorems are directly applicable to the difference-in-means estimator of the average treatment effect. This connection underlies much of the early asymptotic justification in randomization-based causal inference~\cite[e.g.,][]{Liu2014-kk, Ding2018-lr, Fogarty2018, li2017general}. %Beyond the difference-in-means, however, the situation becomes more subtle.

In modern applications such as adaptive clinical trials~\cite{rosenberger2016, Hu2006-vc, bb691c50-25c9-325f-b1ee-0cc92167cbd1}, online A/B tests~\cite{article}, and adaptive policy experiments~\cite{ATHEY201773}, treatment assignments may evolve in response to interim data, violating the independence assumptions of static designs. Such sequential mechanisms introduce dependence across units, requiring martingale‐based techniques~\cite[Chapter 3]{Hall1980-ny} in place of classical i.i.d. arguments.

There is a growing literature on inference under adaptive treatment assignment and bandit-style designs adopts stability-type assumptions and develops martingale-based central limit theorems, leading to methodological parallels with our work; examples include \cite{Kato2020-qo}, \cite{pmlr-v236-cook24a}, \cite{NEURIPS2021_eff30581}, \cite{Hadad2021-jg}, and \cite{NEURIPS2020_6fd86e0a, Zhang2021-zy}. There are, however, several fundamental differences. First, all of these works operate under a superpopulation framework, where randomness in observed outcomes arises from assumed probabilistic outcome models, whereas we adopt a finite-population perspective in which the sole source of randomness is the known probabilistic treatment assignment mechanism. This distinction leads to substantively different assumptions and technical arguments. Second, while some of the above papers (\cite{Hadad2021-jg}; \cite{Zhang2021-zy}) explicitly use a potential outcomes framework, others do not. Third, despite superficial similarities in the notion of ``stability,” the bandit algorithms studied in \cite{NEURIPS2020_6fd86e0a, Zhang2021-zy} are designed to eventually commit to a single arm, in sharp contrast to our setting, where long-run balance across treatment arms, which is central to randomized experimentation, is the primary objective.

The present work addresses this gap, establishing central limit theorems for IPW and AIPW-inspired estimators in finite-population under broad sequential designs in a purely design-based inferential framework, where the potential outcomes are assumed fixed.

%Despite progress, rigorous theory for finite‐population central limit theorems under general sequential Bernoulli assignments remains scarce. Existing results focus on fixed‐probability designs \cite{li2017general} or assume infinite‐population models for adaptive schemes \cite{hadad2021, zhao2022} \textcolor{red}{Zhao}. 

%%%%%%%%%%%%%%%%%%%%%%%%%%%%%%%%%%%%%%%%%%%%%%%%%%%%%%%%%%%%%%%%%%%%%%%%%%%%%%%%%%%%%%%%%%%%%%%%%%%%%%%%%%%%%%%%%%%%%%%%%%%%%%%%%%%%%%%%

\section{Problem Description}
\label{sec:problem_desc}
Consider a study with $N$ experimental units, indexed by $i = 1, \ldots, N$. We adopt the potential outcomes framework, introduced by \cite{Neyman1923} and later formalized by \cite{Rubin1974-gr}. For the $i$th unit, the outcome of interest $Y_i$ is characterized by two potential outcomes: $\Yizero$ under control and $\Yione$ under treatment. The individual treatment effect is defined as $\taui = \Yione - \Yizero$, and our target parameter is the average treatment effect (\textsc{ATE}), defined as
\begin{align}
\label{eqn:ATE}
 \ate = \frac{1}{N} \sum_{i=1}^N \taui \qquad \qquad (\textsc{ATE}).
\end{align}
%\textcolor{red}{Fullstop before or after (ATE)?}

\medskip
%\vspace{10pt}
Assumptions of homogeneity of unit-level treatment effects $\tau_1, \ldots, \tau_N$ play important roles in finite-population causal inference. For example, the assumption that the $\tau_i$'s are the same for $i=1, \ldots, N$, or equivalently,
\begin{align}
\label{eqn:additive-model}
    \Yione = \Yizero + \tau \quad \text{for all } i,
\end{align}
for some constant $\tau \in \mathbb{R}$, is referred to as \emph{additivity} of potential outcomes and is standard in literature~\cite[e.g.,][Chapter 6]{imbens2015}. Here we introduce the following definition that generalizes the concept of treatment effect homogeneity.

\begin{definition}[Generalized treatment effect homogeneity]
\label{definition:additive-model}
Potential outcomes $(Y_i(0), Y_i(1))$, $i=1, \ldots, N$ are said to satisfy generalized treatment effect homogeneity if
\begin{align}
\label{eqn:gen-additive-model}
    \Yione - \overline{Y}_N(1) = \lambda(\Yizero - \overline{Y}_N(0)) \quad \text{for all } i,
\end{align}
for some constant $\lambda \in \mathbb{R}$, where $\overline{Y}_N(\ell) = \tfrac{1}{N}\sum_{i=1}^N Y_i(\ell)$ for $\ell \in \{0,1\}$.
\end{definition}
It is easy to see that additivity~\eqref{eqn:additive-model} implies generalized treatment effect homogeneity~\eqref{eqn:gen-additive-model}. Another sufficient condition for~\eqref{eqn:gen-additive-model} is additivity of potential outcomes on a log-scale, that is,
\begin{align}
\label{eqn:log-additive-model}
    \Yione = c \Yizero \quad \text{for all } i,
\end{align}
for some constant $c \in \mathbb{R}$. We will see that the conditions~\eqref{eqn:additive-model}-\eqref{eqn:log-additive-model} play important roles in the inference problem to be discussed.

\medskip
In the classical randomized treatment allocation design, a pre-defined constant number $N_1$ of the 
$N$ units are assigned to treatment, with the subset selected uniformly at random~\cite{Rubin1974-gr}. Formally, let $\mathbf{K} = (K_1, K_2, \ldots, K_N)^\T \in \{0,1\}^N$ denote the random assignment vector, where $K_i = 1$ if $i$th unit is assigned to the treatment group and $K_i = 0$ otherwise. A simple random sample of size $N_1$ is chosen from the finite population using the assignment vector $\mathbf{K}$, where $\Prob (\mathbf{K}=\mathbf{k}) = \binom{N}{N_1}^{-1}$ for all $\mathbf{k} \in \{0,1\}^N$ satisfying $\mathbf{1}_N^\T\mathbf{k} = N_1$. Given a treatment assignment vector $\mathbf{k} \in \{0,1\}^N$, the observed data $\{Y_i\}_{i=1}^N$ are the realized potential outcomes, where each unit’s outcome corresponds to its assigned treatment or control, defined by
\begin{align}
\label{eqn:potential_outcomes}
   Y_i =  K_i \Yione  +  (1 - K_i) \Yizero  \qquad \text{for} \qquad 
   i = 1, \ldots ,N.
\end{align} A natural estimator of the ATE under the randomized treatment assignment described above is the difference in sample means between the treatment and control groups, i.e.
\begin{align*}
    \taubar 
    &= \frac{1}{N_1} \sum_{i=1}^N K_i Y_i 
       - \frac{1}{N_0} \sum_{i=1}^N (1-K_i) Y_i, \qquad(\textsc{difference-in-means estimator})
\end{align*}
where $N_1 = \sum_{i=1}^N K_i \text{ and } N_0 = N - N_1.$  This estimator is unbiased and satisfies a central limit theorem as $N$ grows to infinity~\cite{li2017general}. We note that the difference-in-means estimator is a special case of the Horvitz–Thompson type estimator \cite{horvitz1952}, also known as the inverse propensity weighted (IPW) estimator, defined as
    \begin{align}
        \label{eqn:IPW}
        \snipw &= \frac{1}{N} \sum_{i=1}^N \left\{\frac{ K_iY_i}{p_i} -  \frac{(1 - K_i)Y_i}{1 - p_i}\right\}.
    \end{align}    
In particular, when $p_i = N_1/N$ for all $i$, this estimator reduces to the usual difference-in-means estimator.

%\textcolor{red}{@TD Sir, could you kindly add the point you mentioned during the Zoom call about the difference-of-means estimator in adaptive designs? I can’t quite recall it.} \TD{@Saikat: See page 5}

\medskip
In contrast to the classical randomized treatment assignment, we consider in this paper a \emph{sequential treatment assignment} mechanism, in which the probability of assigning the $i$th unit to treatment is adaptive. In other words, for each unit $i \in \{1,2,\ldots, N\}$, the assignment indicator $K_i$, conditional on the past history, follows a Bernoulli distribution with success probability $p_i$, which we refer to as the \emph{inclusion probability}. The inclusion probability $p_i$ is not fixed; rather, it is a measurable function of the prior assignment history and outcomes. Formally, we define a sequence of increasing sigma-fields $\{ \sigmafield \}_{i \geq 1}$, where $\sigmafield = \sigma (K_1, Y_1, \ldots, K_{i-1}, Y_{i - 1})$ represents the cumulative information available after assigning treatment or control and observing the outcome of the $(i-1)$th unit. The assignment mechanism is such that
\begin{align}
\label{eqn:sequential-assign}
  p_i \in \sigmafield \quad \text{and} \quad   \Prob(K_i = 1 \mid \sigmafield) = p_i.  \qquad (\textsc{Sequential treatment assignment})
\end{align} In words, the probability of assigning treatment to the $i$th unit may depend on all previous assignments and observations up to stage $(i - 1)$ \emph{in an arbitrary way}; we call this assignment a \emph{sequential treatment assignment}. As an example, one may choose $p_i$ to promote relative balance between the numbers of treatment and control assignments. In this case, $p_i$ can be defined as the complement of the moving average of past assignments: 
\begin{align*}
p_i = 1 - \frac{\sum_{j=1}^{i-1} K_j}{i - 1} \qquad \text{for \;\;  $i \geq 2,$ \;\;\;\;  and} \qquad  p_1 = \frac{1}{2}. 
\end{align*}In this assignment, if the previous  units have mostly been assigned treatment, the chance of assigning treatment to the next unit will be lowered and vice versa~\cite{Wei1978-gd}. 

\medskip
In this paper, we aim to develop estimators, establish their asymptotics, and provide valid inference for the ATE under the sequential treatment assignment scheme~\eqref{eqn:sequential-assign}. %Under \eqref{eqn:sequential-assign}, the number of units assigned to treatment and control units ($N_1$ and $N_0$ respectively) are not predetermined; rather are random variables. Thus, the expectation of the  difference-in-means estimator does not exist because both the denominator terms $N_1$ and $N_0$ have mass at zero with positive probabilities. T
We first consider the Horvitz–Thompson type estimator defined in~\eqref{eqn:IPW} as a natural unbiased estimator of the average treatment effect. In our setting, unlike a static completely randomized experiment, the $p_i$'s will not be equal and will depend on the past history.

It is well known that IPW estimators suffer from inflated 
variance when the probabilities approach extremes \cite{tsiatis2006}. In a model-based setting, this limitation of IPW estimator is mitigated by the AIPW estimator \cite{robins1994} via model augmentation, offering double robustness. In our setting, where no probability model for the potential outcomes is assumed, we propose the following finite-population model-free version of the AIPW estimator:

    \begin{align}
        \label{eqn:AIPW}
        \snaipw &= \frac{1}{N} \sum_{i=1}^N \left[\left\{  \frac{K_i(Y_i - \widehat{Y}_{i-1}(1)) }{p_i} + \widehat{Y}_{i-1}(1) \right\}
- \left\{\frac{(1 - K_i)(Y_i - \widehat{Y}_{i-1}(0)) }{1 - p_i} + \widehat{Y}_{i-1}(0) \right\}\right], 
    \end{align} where $\widehat{Y}_{1}(0) = \widehat{Y}_{1}(1) = 0$, and for $i \geq 2$
\begin{align}
\label{eqn:yi-hat}
   \widehat{Y}_{i-1}(0) = \frac{1}{i-1} \sum_{j = 1}^{i - 1} \frac{(1 - K_j)Y_j }{1 - p_j}, \qquad \text{and} \qquad \widehat{Y}_{i-1}(1) = \frac{1}{i-1} \sum_{j = 1}^{i - 1} \frac{K_jY_j }{p_j}.
\end{align}
Note that for large $N$, the weighted average $\widehat{Y}_{N}(\ell)$ serves as an intuitive estimator of $\overline{Y}_N(\ell)$ for $\ell \in \{0,1\}$.
In this sense, $\snaipw$ is directly motivated from the classical AIPW estimator~\cite{robins1994}. We formalize this intuition in a later theorem on the behavior of the AIPW estimator $\snaipw$ (see Theorem~\ref{Thm:aipw}).

%%%%%%%%%%%%%%%%%%%%%%%%%%%%%%%%%%%%%%%%%%%%%%%%%%%%%%%%%%%%%%%%%%% 
%%%%%%%%%%%%%%%%%%%%%%%%%%%%%%%%%%%%%%%%%%%%%%%%%%%%%%%%%%%%%%%%%%% 

\section{Main Results}\label{sec:results}
This section presents our main theoretical contributions. We establish the asymptotic normality of the estimators 
$\snipw$ and $\snaipw$ under the sequential experimental designs in~\eqref{eqn:sequential-assign}, 
and derive conservative variance estimators that facilitate the construction of asymptotically valid confidence intervals for the average treatment effect $\ate$. Our analysis proceeds in two steps: first, we prove central limit theorems for both estimators; 
second, we propose conservative estimators of their asymptotic variances. 
Together, these results enable the construction of asymptotically valid confidence intervals for $\ate$.

\medskip
The foundation of our analysis rests on a structural condition that we call \emph{design stability}. We consider two notions of design stability (a) \emph{strong stability}, and (b) \emph{weak stability}. Strong design stability requires that the assignment probabilities themselves converge asymptotically, ensuring that the design does not drift in the limit. Weak design stability, however, relaxes this by requiring only that the sample averages of the inverse propensity scores and of the inverse complement propensity scores converge in probability to finite, non-random limits. At a high level, both forms of stability ensure that the cumulative effect of sequential randomization does not induce excessive variability in the long run.

\begin{definition}[Strong design stability]
\label{def:design-stability}
    A sequential design with inclusion probabilities $\{ p_i \}_{i \geq 1}$ is said to be strongly stable if there exists a \emph{non-random} scalar $\pstar \in (0,1)$ such that
    %\begin{align}
    %   \frac{1}{N} \sum_{i = 1}^N \frac{1}{p_i} \stackrel{\mathbb{P}}{\rightarrow} \frac{1}{\ponestar} \qquad \text{and} \qquad \frac{1}{N} \sum_{i = 1}^N \frac{1}{1 - p_i} \stackrel{\mathbb{P}}{\rightarrow} \frac{1}{\ptwostar}
    %\end{align}
    \begin{align}
        p_i \stackrel{\mathbb{P}}{\rightarrow} \pstar.
    \end{align}
\end{definition}

%\textcolor{red}{It sounds nicer to use strong and weak stability instead of stability and weak stability. Please change this throughout the paper.}

Although the notion of strong design stability is intuitive, it is not satisfied by several popular designs. A concrete example is Efron’s biased coin design~\cite{Efron1971-yh}, which enforces balance between treatment and control assignments. Fortunately, Definition~\ref{def:design-stability} can be relaxed so that, even if a design is not stable in the strong sense, central limit theorems for the IPW and AIPW estimators may still hold under weaker regularity conditions. This motivates the following notion of weak stability.

%Interestingly, definition~\eqref{def:design-stability} can be relaxed. Even if a given design is not stable in the sense above, central limit theorems for the IPW and AIPW estimators may still hold under milder regularity conditions. Specifically, it suffices that the sample averages of the inverse propensity score and of the inverse of its complement converge in probability to finite, non-random limits. Formally, we require that there exist non-random constants $\ponestar, \ptwostar \in (0,1)$ such that
%\begin{align}
%\label{eqn:modified_stability}
%\frac{1}{N} \sum_{i = 1}^N \frac{1}{p_i} \stackrel{\mathbb{P}}{\rightarrow} \frac{1}{\ponestar} \qquad \text{and} \qquad \frac{1}{N} \sum_{i = 1}^N \frac{1}{1 - p_i} \stackrel{\mathbb{P}}{\rightarrow} \frac{1}{\ptwostar}.
%\end{align}
%An example satisfying this weaker property is Efron’s biased coin design, discussed in detail in Section~\ref{sec:efron}.

\begin{definition}[Weak design stability]
\label{def:weak-design-stability}
    A sequential design with inclusion probabilities $\{ p_i \}_{i \geq 1}$ is said to be weakly stable if there exists \emph{non-random} scalars $\ponestar, \ptwostar \in (0,1)$ such that
    %\begin{align}
    %   \frac{1}{N} \sum_{i = 1}^N \frac{1}{p_i} \stackrel{\mathbb{P}}{\rightarrow} \frac{1}{\ponestar} \qquad \text{and} \qquad \frac{1}{N} \sum_{i = 1}^N \frac{1}{1 - p_i} \stackrel{\mathbb{P}}{\rightarrow} \frac{1}{1 - \ptwostar}
    %\end{align}
    \begin{align}
\label{eqn:modified_stability}
\frac{1}{N} \sum_{i = 1}^N \frac{1}{p_i} \stackrel{\mathbb{P}}{\rightarrow} \frac{1}{\ponestar} \qquad \text{and} \qquad \frac{1}{N} \sum_{i = 1}^N \frac{1}{1 - p_i} \stackrel{\mathbb{P}}{\rightarrow} \frac{1}{1 - \ptwostar}.
\end{align}
\end{definition}

%\textcolor{red}{I changed the $p_2^\star$ notion to $1 - p_2^\star$, this looks more natural. Please change this in all proofs in all other places.}

\begin{remark}
Technically, the inclusion probabilities can be viewed as a triangular array 
\(\{p_{i,N} : 1 \le i \le N,\; N \ge 1\}\), which induces a sequence of estimands 
\(\{\bar{\tau}_N\}_{N \ge 1}\) and, a sequence of estimators 
\(\{\widehat{\tau}_N\}_{N \ge 1}\). Analysis for a fixed \(N\) is referred to as finite-sample, 
whereas statements concerning the limiting behavior as \(N \to \infty\) are termed asymptotic. Although our framework is asymptotic, most of our results are derived from finite-sample arguments 
and are interpreted through this asymptotic lens. For notational simplicity, we suppress the 
dependence on \(N\) whenever no ambiguity arises.
\end{remark}

Suppose strong stability holds, Since, by assumption, the inclusion probabilities are bounded away from both \(0\) and \(1\), Slutsky’s theorem implies
$\frac{1}{p_i} \stackrel{\mathbb{P}}{\rightarrow} \frac{1}{\pstar}$ and
$\frac{1}{1-p_i} \stackrel{\mathbb{P}}{\rightarrow} \frac{1}{1-\pstar}$
    and hence,
    $\frac{1}{N}\sum_{i=1}^N \frac{1}{p_i} \stackrel{\mathbb{P}}{\rightarrow} \frac{1}{\pstar}$
and $\frac{1}{N}\sum_{i=1}^N \frac{1}{1-p_i} \stackrel{\mathbb{P}}{\rightarrow} \frac{1}{1-\pstar}$,
    which shows that strong stability implies weak stability with \(\ponestar = \ptwostar = \pstar\). The tradeoff between these two stability conditions becomes apparent when estimating the asymptotic variance of our estimators to construct confidence intervals for the ATE. While variance estimators can be constructed in a completely data-dependent manner under strong stability, weak stability requires additional restrictions (see Theorems~\ref{corr:vipw_est_weak} and~\ref{corr:vaipw_est_weak} for more details). 
%Interestingly, definition~\eqref{def:design-stability} can be relaxed. Even if a given design is not stable in the sense above, central limit theorems for the IPW and AIPW estimators may still hold under milder regularity conditions. Specifically, it suffices that the sample averages of the inverse propensity score and of the inverse of its complement converge in probability to finite, non-random limits. Formally, we require that there exist non-random constants $\ponestar, \ptwostar \in (0,1)$ such that
%\begin{align}
%\label{eqn:modified_stability}
%\frac{1}{N} \sum_{i = 1}^N \frac{1}{p_i} \stackrel{\mathbb{P}}{\rightarrow} \frac{1}{\ponestar} \qquad \text{and} \qquad \frac{1}{N} \sum_{i = 1}^N \frac{1}{1 - p_i} \stackrel{\mathbb{P}}{\rightarrow} \frac{1}{\ptwostar}.
%\end{align}
%An example satisfying this weaker property is Efron’s biased coin design, discussed in detail in Section~\ref{sec:efron}.

%It is instructive to note that design stability is automatically satisfied when $p_i \in [\delta, 1-\delta]$ for some $\delta > 0$ and $p_i \stackrel{\mathbb{P}}{\rightarrow} p^\star$. However, the converse is not true: design stability alone does not guarantee $p_i \stackrel{\mathbb{P}}{\rightarrow} p^\star$. A concrete example is provided by Efron's biased coin design, which we examine in detail in Section~\ref{sec:efron}.
%%%%%%%%%%%%%%%%%%%%%%%%%%%%%%%%%%%%%%%%%%%%%%%%%%%%%%%%%%%%%%%%%%% 
%%%%%%%%%%%%%%%%%%%%%%%%%%%%%%%%%%%%%%%%%%%%%%%%%%%%%%%%%%%%%%%%%%% 

\subsection{The IPW estimator}
\label{sec:ipw_results}
We now turn to the asymptotic behavior of the IPW estimator~$\snipw$, as defined in~\eqref{eqn:IPW}. To derive our main result, we impose a positivity condition on the inclusion probabilities along with uniform boundedness and natural moment conditions on the potential outcomes. 
\begin{assumption}
\label{assn:IPW}
The inclusion probabilities and potential outcomes satisfy the following regularity conditions:
\begin{enumerate}[label=(\alph*)]
    \item There exists $\delta \in (0,1)$ such that $p_i \in [\delta, 1 - \delta]$ for all $i \geq 1$.     
    \item There exists a constant $M > 0$ such that 
    \begin{align*}
        |Y_i(\ell)| \leq M \quad \text{for all } i \geq 1 \text{ and } \ell \in \{0,1\}.
    \end{align*}
    
    \item  The following limits exist:
    \begin{align*}
        \lim_{N \to \infty} \frac{1}{N} \sum_{i=1}^N \Yizero^2 = \moment_{0},   \quad 
        \lim_{N \to \infty} \frac{1}{N} \sum_{i=1}^N \Yione^2 = \moment_{1},  \quad \text{and} \quad
        \lim_{N \to \infty} \frac{1}{N} \sum_{i=1}^N \Yizero \Yione = \crossmoment,
    \end{align*}
    where $\moment_{0}, \moment_{1} > 0$ and $\crossmoment \in \mathbb{R}$.
\end{enumerate}
\end{assumption} 
The first condition in Assumption~\ref{assn:IPW} ensures that the IPW estimator~\eqref{eqn:IPW} is well defined, the second condition is a uniform bound on the potential outcomes, and the third assumption ensures that the limiting asymptotic variance of $\snipw$ exists. With this set-up, we have the following guarantees on the asympotic behavior of $\snipw$.

\begin{theorem}
\label{Thm:ipw}
Suppose Assumption~\ref{assn:IPW} holds, and the sequential design with inclusion probabilities $\{p_i\}_{i \geq 1}$ is either strongly or weakly stable in the sense of Definition~\ref{def:design-stability} or Definition~\ref{def:weak-design-stability}, respectively. Then the IPW estimator~\eqref{eqn:IPW} satisfies
\begin{equation}
\sqrt{N}\left(\snipw - \ate\right) \xrightarrow{d} \mathcal{N}\left(0, \vipw\right),
\label{cltipw}
\end{equation}
with asymptotic variance
\begin{align}
\vipw =
\begin{cases}
\vipwstrong = \moment_{0}\dfrac{\pstar}{1-\pstar} + \moment_{1}\dfrac{1-\pstar}{\pstar} + 2\crossmoment 
& \text{(strong design stability)}, \\[1.2em]
\vipwweak = \moment_{0}\dfrac{\ptwostar}{1-\ptwostar} + \moment_{1}\dfrac{1-\ponestar}{\ponestar} + 2\crossmoment 
& \text{(weak design stability)}.
\end{cases}
\label{eqn:vipw_cases}
\end{align}
\end{theorem}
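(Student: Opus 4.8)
The plan is to cast $\sqrt{N}(\snipw - \ate)$ as a normalized sum of martingale differences and then invoke a martingale central limit theorem \cite[Chapter 3]{Hall1980-ny}. Writing $D_i = K_iY_i/p_i - (1-K_i)Y_i/(1-p_i)$ for the $i$th summand of \eqref{eqn:IPW} and using $K_iY_i = K_i\Yione$ and $(1-K_i)Y_i = (1-K_i)\Yizero$ (since $K_i^2 = K_i$ and $K_i(1-K_i)=0$), we have $D_i = K_i\Yione/p_i - (1-K_i)\Yizero/(1-p_i)$. Because $\Yione,\Yizero$ are fixed and $p_i \in \sigmafield$ with $\Prob(K_i=1\mid\sigmafield)=p_i$, a one-line computation gives $\E[D_i \mid \sigmafield] = \Yione - \Yizero = \taui$. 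Hence $\xi_i := D_i - \taui$ satisfies $\E[\xi_i \mid \sigmafield]=0$, so $\{\xi_i, \mathcal{F}_i\}$ is a martingale difference sequence and $\sqrt{N}(\snipw-\ate) = N^{-1/2}\sum_{i=1}^N \xi_i$. It therefore suffices to verify the two hypotheses of the martingale CLT: convergence of the predictable quadratic variation to a deterministic limit, and a conditional Lindeberg condition.

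Second, I would compute the predictable quadratic variation. Using $K_i^2=K_i$ once more, $\E[D_i^2\mid\sigmafield] = \Yione^2/p_i + \Yizero^2/(1-p_i)$, so that
\[
\E[\xi_i^2\mid\sigmafield] = \Yione^2\,\frac{1-p_i}{p_i} + \Yizero^2\,\frac{p_i}{1-p_i} + 2\,\Yione\,\Yizero .
\]
Summing and dividing by $N$, the cross term $\tfrac{2}{N}\sum_i \Yione\Yizero$ converges to $2\crossmoment$ by Assumption~\ref{assn:IPW}(c). The Lindeberg condition is essentially free: Assumption~\ref{assn:IPW}(a)--(b) keep $p_i$ inside $[\delta,1-\delta]$ and $|Y_i(\ell)|\le M$, hence $|\xi_i|\le C$ for a universal constant $C$; the rescaled increments $\xi_i/\sqrt{N}$ are then uniformly $O(N^{-1/2})$, so the conditional Lindeberg sum vanishes for large $N$ (indeed a crude Lyapunov bound with fourth moments suffices). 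Thus the whole CLT reduces to showing $\tfrac1N\sum_i\E[\xi_i^2\mid\sigmafield] \xrightarrow{p} \vipw$.

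Third, the strong-stability case is the clean one. Since $g(p)=\tfrac{1-p}{p}$ and $h(p)=\tfrac{p}{1-p}$ are bounded and continuous on $[\delta,1-\delta]$ and $p_i\xrightarrow{p}\pstar$, we get $g(p_i)\xrightarrow{p} g(\pstar)$; combined with $\Yione^2\le M^2$ and Assumption~\ref{assn:IPW}(c), a bounded-convergence/Ces\`aro argument (bounding $\tfrac1N\sum_i \Yione^2\,\E|g(p_i)-g(\pstar)|$) gives $\tfrac1N\sum_i \Yione^2 g(p_i)\xrightarrow{p} \moment_{1}\,g(\pstar)$ and likewise for the control term. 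This recovers exactly $\vipwstrong$, and the martingale CLT closes this case.

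Fourth --- and this is where I expect the main difficulty --- is the weak-stability case. Here Definition~\ref{def:weak-design-stability} controls only the \emph{unweighted} averages $\tfrac1N\sum_i 1/p_i$ and $\tfrac1N\sum_i 1/(1-p_i)$, whereas the predictable variation involves the \emph{outcome-weighted} averages $\tfrac1N\sum_i \Yione^2/p_i$ and $\tfrac1N\sum_i \Yizero^2/(1-p_i)$. To land on $\vipwweak$ one must show these factorize in the limit, e.g.\ $\tfrac1N\sum_i \Yione^2/p_i \xrightarrow{p} \moment_{1}/\ponestar$. This does \emph{not} follow from the unweighted limits alone: a fixed weight sequence $\{\Yione^2\}$ that co-varies with $\{1/p_i\}$ along the index can break the factorization. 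I would therefore concentrate effort here, exploiting that $p_i$ is $\sigmafield$-measurable and cannot depend on the present $Y_i(\ell)$, together with a summation-by-parts/decorrelation argument (or an auxiliary regularity condition on the joint behaviour of $\{Y_i(\ell)\}$ and $\{p_i\}$, of the kind the concrete designs of Sections~\ref{sec:wei}--\ref{sec:efron} are shown to satisfy). Once the predictable variation is pinned to the deterministic constant $\vipwweak$, the same martingale CLT yields \eqref{cltipw}.
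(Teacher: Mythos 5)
Your martingale decomposition, conditional-variance computation, and Lindeberg check coincide exactly with the paper's proof: the paper writes $\sqrt{N}(\snipw - \ate) = \sum_{i=1}^N \xi_i$ with $\xi_i = \tfrac{K_i-p_i}{\sqrt{N}}\bigl(\tfrac{\Yizero}{1-p_i} + \tfrac{\Yione}{p_i}\bigr)$, which is algebraically identical to your $(D_i - \taui)/\sqrt{N}$, verifies the same zero-conditional-mean and bounded-increment properties, and closes the strong-stability case with precisely your argument: continuous mapping applied to $p_i/(1-p_i)$, followed by a Ces\`aro/dominated-convergence lemma (the paper's Lemma~\ref{lemma:aibi}) to pass from $p_i \xrightarrow{p} \pstar$ to convergence of the outcome-weighted averages. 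Up to and including the strong case, your proposal is complete and is essentially the paper's proof.

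Your hesitation in the weak-stability case is not a defect of your proposal; it exposes a genuine gap in the paper. The paper closes that case by invoking its Lemma~\ref{lemma:aiibii}, which asserts exactly the factorization you doubt: if bounded random variables satisfy $\tfrac1n\sum_{i} a_i \xrightarrow{p} a^*$ and bounded reals satisfy $\tfrac1n\sum_{i} b_i \to b^*$, then $\tfrac1n\sum_{i} a_ib_i \xrightarrow{p} a^*b^*$. That lemma is false as stated: its proof relies on the inequality $\bigl|\tfrac1n\sum_{i}(a_i-a^*)b_i\bigr| \le L\bigl|\tfrac1n\sum_{i}(a_i-a^*)\bigr|$, which is not a consequence of the triangle inequality (one only obtains $\tfrac{L}{n}\sum_{i}|a_i-a^*|$, and the hypothesis controls the signed average, not the average of absolute deviations). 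Your ``co-varying weights'' worry yields a concrete counterexample to the weak-stability half of Theorem~\ref{Thm:ipw} itself. Take the deterministic (hence trivially sequential) design $p_i = 1/3$ for odd $i$ and $p_i = 2/3$ for even $i$, with potential outcomes $\Yione = 1$, $\Yizero = 0$ for odd $i$ and $\Yione = 0$, $\Yizero = 1$ for even $i$. Assumption~\ref{assn:IPW} holds, and the design is weakly stable with $\ponestar = 4/9$, $\ptwostar = 5/9$, $\moment_0 = \moment_1 = 1/2$, $\crossmoment = 0$, so the theorem claims $\vipwweak = 5/4$; but the total conditional variance is
\begin{align*}
\frac1N\sum_{i=1}^N \frac{p_i}{1-p_i}\Yizero^2 + \frac1N\sum_{i=1}^N \frac{1-p_i}{p_i}\Yione^2 + \frac2N\sum_{i=1}^N \Yizero\Yione \;\to\; 1 + 1 + 0 = 2,
\end{align*}
so in fact $\sqrt{N}(\snipw - \ate) \xrightarrow{d} \mathcal{N}(0,2)$. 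Hence the weak-stability conclusion cannot be derived from Definition~\ref{def:weak-design-stability} and Assumption~\ref{assn:IPW} alone; it requires exactly the kind of additional decoupling or regularity condition you propose (for Efron's design, a mixing argument for the imbalance chain, whose law is unrelated to the fixed outcome sequence, would supply it). You correctly located the one step of this proof that does not go through.
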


%\newcommand{\m}{m}
%\textcolor{red}{Above looks good?}
\begin{remark}
The proof of Theorem~\ref{Thm:ipw} proceeds by rewriting the centered and scaled IPW estimator as a sum of martingale difference terms. This representation allows us to apply the martingale central limit theorem \cite[Chapter 3]{Hall1980-ny}. We establish unbiasedness by verifying that each summand has zero conditional mean, and then compute the conditional variance, which converges under both strongly stable and weakly stable designs to the stated asymptotic variance. Finally, we check the Lindeberg condition, ensuring that the contribution of large deviations vanishes. Together, these steps yield asymptotic normality of the IPW estimator with the asymptotic variance given in~\eqref{eqn:vipw_cases}. Refer to Section~\ref{sec:Proof-of-Thm:ipw} of the Supplementary Material for detailed proof. %\textcolor{red}{TD Comment: Now we need to refer to the supplementary material.}
\end{remark}

\begin{remark}
It is worth noting that our stability definitions provide two distinct mechanisms to ensure that the random conditional variance converges in probability to a deterministic limit. As seen in the proof of Theorem \ref{Thm:ipw}, this conditional variance is 
\begin{align*}
    V_N = \frac{1}{N}\sum_{i=1}^N\frac{p_i}{1-p_i}\Yizero^2 + \frac{1}{N}\sum_{i=1}^N\frac{1-p_i}{p_i}\Yione^2 + \frac{2}{N}\sum_{i=1}^N\Yizero\Yione.
\end{align*}

Strong stability requires $p_i$ to converge in probability to fixed limit $p^\star$. Intuitively, the design asymptotically ``freezes'' as every term in the sum $V_N$ becomes constant. On the other hand, weak stability permits $p_i$ to fluctuate but requires the sample means of $N^{-1}\sum_{i=1}^N p_i^{-1}$ and $N^{-1}\sum_{i=1}^N (1-p_i)^{-1}$ to converge in probability to some fixed limits $1/\ponestar$ and $1/\ptwostar$. Intuitively, this acts as a Law of Large Numbers for the variance process. While individual weights $p_i/(1-p_i)$ and $(1-p_i)/p_i$ remain volatile, their average converges settles to a fixed constant. This ensures that the ``total precision'' of the experiment becomes predictable, satisfying the martingale CLT conditional variance stabilization condition without requiring the assignment algorithm to commit to a specific allocation.
\end{remark}

%A few comments regarding Theorem~\ref{Thm:ipw} are in order. First, we consider that the design is strongly stable in sense of Definition~\eqref{def:design-stability}. 
\medskip
Having established the asymptotic normality of the IPW estimator, we next construct confidence intervals for the average treatment effect $\ate$. This, in turn, requires estimation of the asymptotic variance $\vipw$. First, we consider that the design is strongly stable in the sense of Definition~\ref{def:design-stability}, and assume that $p^\star$ is known (which is the case in our illustrative example on strongly stable designs). To estimate $\vipwstrong$ in~\eqref{eqn:vipw_cases}, we must estimate $\moment_0$, $\moment_1$, and $\crossmoment$. While obtaining consistent estimators of $\moment_0$ and $\moment_1$ under strong stability is straightforward, the cross-moment term $\crossmoment$ cannot be estimated from the observed outcomes without additional assumptions, as only one potential outcome is observed for each unit. To address this problem, we apply the Cauchy–Schwarz inequality to obtain $|\crossmoment| \leq \m_{0} \m_{1}$, leading to the conservative variance estimator:
\begin{align}
\widehat{\vipwstronghat} = \left(\widehat{\m}_0 \sqrt{\frac{p^\star}{1-p^\star}} + \widehat{\m}_1 \sqrt{\frac{1-p^\star}{p^\star}}\right)^2,
\label{eqn:vipw_liberal}
\end{align}
where $\widehat{\m}_0$ and $\widehat{\m}_1$ are estimators of $m_0$ and $m_1$ that are consistent under strong stability. We propose the following intuitive estimators for $\moment_0$ and $\moment_1$
\begin{align}
\label{eqn:mhats}
%\widehat{p}^\star = \frac{1}{N}\sum_{i=1}^N p_i, \qquad  
\widehat{m}^2_{0} = \frac{1}{\max\{N_0,1\}}\sum_{i=1}^N (1-K_i)Y_i^2,\qquad \text{and} \qquad
\widehat{m}^2_{1} = \frac{1}{\max\{N_1,1\}}\sum_{i=1}^N K_iY_i^2, \qquad    
\end{align}
where $N_1 = \sum_{i = 1}^N K_i$ and $N_0 = N -N_1$. The variance estimator in~\eqref{eqn:vipw_liberal}, which incorporates the estimators $\widehat{m}_{0}^2$ and $\widehat{m}_{1}^2$ defined in~\eqref{eqn:mhats}, is consistent when the potential outcomes are additive on a log-scale, that is, satisfy \eqref{eqn:log-additive-model} and has an asymptotic positive bias otherwise. Hence, we obtain the following theorem.

%Standard arguments establish that $\widehat{p}^\star \stackrel{\mathbb{P}}{\rightarrow} \pstar$, $\widehat{m}_{0}^2 \stackrel{\mathbb{P}}{\rightarrow} \moment_{0}$, and $\widehat{m}^2_1 \stackrel{\mathbb{P}}{\rightarrow} \moment_{1}$ (see Theorem~\ref{corr:vipw_est}). The fundamental challenge lies in estimating the cross-moment term 
%$\crossmoment$ which involves the product of both potential outcomes for each unit. Since only one potential outcome is observed per unit, $\crossmoment$ is not directly estimable without additional assumptions. To address this identification problem, 

%\TD{This estimator is consistent when the potential outcomes are additive, i.e., satisfy
%\begin{align}
%\label{eqn:additive-model}
%    \Yione = \Yizero + \tau \quad \text{for} \quad i \in \{1,2, \hdots, N\},
%\end{align}
%and conservative otherwise. Thus, we have the following corollary.}

%\medskip

%\newcommand{\pstar}{p^\star} 

\begin{theorem}\label{corr:vipw_est}
For strongly stable designs (Definition~\ref{def:design-stability}), the estimators $\widehat{m}_0^2$ and $\widehat{m}_1^2$ defined in~\eqref{eqn:mhats} 
are consistent for $\moment_{0}$ and $\moment_{1},$ respectively. Furthermore, the variance estimator $\widehat{\vipwstronghat}$ given by~\eqref{eqn:vipw_liberal} provides a conservative estimate of $\vipwstrong$, 
and is consistent when the potential outcomes are additive on a log scale, that is, satisfy~\eqref{eqn:log-additive-model}. 
\end{theorem}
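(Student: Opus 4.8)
\emph{The plan is} to separate the statement into three claims and tackle them in sequence: (i) consistency of $\widehat{m}_0^2$ and $\widehat{m}_1^2$ for $\moment_0$ and $\moment_1$; (ii) asymptotic conservativeness of $\widehat{\vipwstronghat}$ for $\vipwstrong$; and (iii) exact consistency under log-additivity~\eqref{eqn:log-additive-model}. Claim (i) is the engine: once $\widehat{m}_\ell^2 \xrightarrow{p} \moment_\ell$, taking positive square roots gives $\widehat{m}_\ell \xrightarrow{p} \m_\ell$, and expanding the square in~\eqref{eqn:vipw_liberal}---the cross term collapses because $\sqrt{\pstar/(1-\pstar)}\,\sqrt{(1-\pstar)/\pstar}=1$---yields, by the continuous mapping theorem,
\begin{align*}
\widehat{\vipwstronghat} \xrightarrow{p} \moment_0\frac{\pstar}{1-\pstar} + \moment_1\frac{1-\pstar}{\pstar} + 2\,\m_0\m_1.
\end{align*}
Comparing with the expression for $\vipwstrong$ in~\eqref{eqn:vipw_cases}, the entire limiting bias is $2(\m_0\m_1 - \crossmoment)$, so claims (ii) and (iii) reduce to controlling this single quantity.

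To establish (i), I would use $K_iY_i^2 = K_i\Yione^2$ and $(1-K_i)Y_i^2=(1-K_i)\Yizero^2$ to write $\widehat{m}_1^2 = \big(\tfrac1N\sum_i K_i\Yione^2\big)/\big(\tfrac1N\sum_i K_i\big)$ for $N$ large (where the truncation $\max\{N_1,1\}$ is inactive), and similarly for $\widehat{m}_0^2$. Since $\pstar>0$, Slutsky's theorem reduces everything to showing the numerator converges to $\pstar\moment_1$ and the denominator to $\pstar$. Splitting the numerator as $\tfrac1N\sum_i(K_i-p_i)\Yione^2 + \tfrac1N\sum_i p_i\Yione^2$, the first sum is an average of bounded martingale differences with respect to $\{\sigmafield\}$ (here $\Yione$ is a fixed constant and $p_i\in\sigmafield$), whose variance is $O(1/N)$ by orthogonality and so vanishes in $L^2$; in the second sum I would further replace $p_i$ by $\pstar$, the remainder being bounded by $M^2\cdot\tfrac1N\sum_i|p_i-\pstar|$.

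The main obstacle is precisely this last term: I must upgrade the marginal convergence $p_i\xrightarrow{p}\pstar$ granted by strong stability to the averaged statement $\tfrac1N\sum_i|p_i-\pstar|\xrightarrow{p}0$. I would obtain it from bounded convergence---since $|p_i-\pstar|\le 1$ and $p_i\xrightarrow{p}\pstar$ force $\E|p_i-\pstar|\to 0$---followed by the fact that Ces\`aro averages of a null sequence are null, so that $\E\big[\tfrac1N\sum_i|p_i-\pstar|\big]\to 0$ and $L^1$ convergence gives the claim. This is the only place where strong stability, rather than the mere boundedness of Assumption~\ref{assn:IPW}(a), is used; the denominator $\tfrac1N\sum_i K_i\xrightarrow{p}\pstar$ follows from the identical decomposition with $\Yione^2$ replaced by $1$.

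Finally, for (ii) and (iii) I would control the bias $2(\m_0\m_1-\crossmoment)$ directly. Applying Cauchy--Schwarz to $\tfrac1N\sum_i\Yizero\Yione$ and passing to the limit gives $|\crossmoment|\le\m_0\m_1$, so the bias is nonnegative and $\widehat{\vipwstronghat}$ overcovers, proving (ii). Under log-additivity $\Yione=c\Yizero$, Assumption~\ref{assn:IPW}(c) yields $\moment_1=c^2\moment_0$ and $\crossmoment=c\,\moment_0$, hence $\m_0\m_1=|c|\,\moment_0$ and the bias equals $(|c|-c)\moment_0$, which vanishes for the natural positive multiplicative effect $c>0$ (equality in Cauchy--Schwarz), giving exact consistency and proving (iii).
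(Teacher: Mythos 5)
Your proposal is correct and follows essentially the same route as the paper's proof: the same split of $\tfrac1N\sum_i K_i\Yione^2$ into a martingale-difference part (killed by an $L^2$/Chebyshev bound) plus a Ces\`aro part, the same Slutsky step against $\max\{N_1,1\}/N \xrightarrow{p} p^\star$, and the same Cauchy--Schwarz bound $|m_{01}| \le m_0 m_1$ for conservativeness; the paper merely packages your inlined Ces\`aro and cross-average arguments as standalone lemmas (its Lemmas on Ces\`aro means in probability and on averages $\tfrac1n\sum_i a_ib_i$). The one substantive difference is your endgame for claim (iii): the paper argues abstractly that consistency forces equality in Cauchy--Schwarz and hence proportionality $\Yione = c\,\Yizero$, without tracking signs, whereas your explicit computation of the limiting bias $2(m_0m_1 - m_{01}) = 2(|c|-c)\,m^2_0$ shows that the bias vanishes only when $c>0$. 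This is a genuine refinement: under the paper's stated log-additive model with arbitrary $c\in\mathbb{R}$, a negative $c$ gives $m_{01} = c\,m^2_0 = -m_0m_1$, so the estimator remains strictly conservative and the theorem's consistency claim holds only for positive proportionality constants. Your version of the proof makes this hypothesis visible where the paper's does not.
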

%\textcolor{red}{TD comment: potential outcomes are never completely known, so saying that they ``exhibit'' a specific behavior may sound a little awkward. Similar changes have been made to the statements of the other theorems.}
Refer to Section~\ref{sec:Proof-of-Corr:vipw_est} of the Supplementary Material for a proof of the theorem. 

\begin{remark}
\label{remark:phat}
If for a strongly stable design the limiting value $\pstar$ is difficult to compute explicitly, the following consistent estimator
\begin{align}
\label{eqn:phat}
\widehat{p}^\star = \frac{1}{N}\sum_{i=1}^N p_i, \qquad  
\end{align}
may be substituted for $p^\star$ into the variance estimator $\widehat{\vipwstronghat}$, as defined in~\eqref{eqn:vipw_liberal}. 
\end{remark}

\begin{remark}
\label{rem:hw}
$\widehat{\vipwstronghat}$ closely resembles the conservative variance estimator $\widehat{V}'$ for the variance of the difference-in-means estimator~\cite[Homework Problem 4.5]{Ding2023-ku}. Under strong stability, the effective sample sizes satisfy $N_1 \approx Np^\star$ and $N_0 \approx N(1-p^\star)$ asymptotically. Substituting these approximations into the variance estimator $\widehat{V}'$ in~\cite[Homework Problem 4.5]{Ding2023-ku} yields the same functional form as our conservative variance estimator.
\end{remark}

We now turn to weakly stable designs (Definition~\ref{def:weak-design-stability}) and assume that the limiting quantities $\ponestar$ and $\ptwostar$ are known (which is the case in our illustrative example on weakly stable designs). Using arguments exactly analogous to the strongly stable case, we obtain the following conservative estimator of $\vipwweak$~\eqref{eqn:vipw_cases}:

\begin{align}
\widehat{\vipwweak} = \widetilde{\m}_0^2 {\frac{\ptwostar}{1-\ptwostar}} + \widetilde{\m}_1^2 {\frac{1-\ponestar}{\ponestar}} + 2\widetilde{\m}_0\widetilde{\m}_1,
\label{eqn:varipw_est_weak}
\end{align}
where $\widetilde{m_0}$ and $\widetilde{m_1}$ are estimators of $\m_0$ and $\m_1$ that are consistent under weak stability. However, unlike strongly stable designs, it is difficult to consistently estimate $\moment_0$ and $\moment_1$ without further restrictions. This illustrates the tradeoff between strong and weak design stability: although weak stability enlarges the class of admissible designs, it requires additional assumptions for consistent estimation of $\moment_0$ and $\moment_1$, and hence for conservative variance estimation.
In particular, under the mild assumption (though not necessarily minimal) that, for some constant $\widetilde{p} \in (0,1)$,
\begin{align}
    \frac{1}{N}\sum_{i=1}^N p_i \stackrel{\mathbb{P}}{\rightarrow} \widetilde{p},
\label{eqn:extra_assumption}
\end{align}
a consistent estimator of $\moment_0$ is 
\begin{align}
    \widetilde{m}_{0}^2 = \frac{1}{N (1-\widetilde{p})}\sum_{i=1}^N (1-K_i) Y_i^2,
    \label{eqn:weak_m0_ipw}
\end{align}
with the analogous estimator for $\moment_1$ given by 
\begin{align}
    \widetilde{m}_{1}^2 = \frac{1}{N \widetilde{p}}\sum_{i=1}^N K_i Y_i^2.
    \label{eqn:weak_m1_ipw}
\end{align}
Some intuition and an example illustrating when weak stability holds but \eqref{eqn:extra_assumption} does not, is provided in Section~\ref{sec:weak_stab_alone_doesnt_imply_eqn18} of the supplementary material. 
As in the strongly stable case, the variance estimator in~\eqref{eqn:varipw_est_weak}, which incorporates the estimators~$\widetilde{m}_{0}^2$ and~$\widetilde{m}_{1}^2$ defined in~\eqref{eqn:weak_m0_ipw} and~\eqref{eqn:weak_m1_ipw}, respectively, is consistent when the potential outcomes are additive on the log-scale; that is, when they satisfy~\eqref{eqn:log-additive-model}. Thus, we have the following theorem.

\begin{theorem}\label{corr:vipw_est_weak}
For weakly stable designs (Definition~\ref{def:weak-design-stability}), under the sufficient condition~\eqref{eqn:extra_assumption}, the estimators $\widetilde{m}_0^2$ and $\widetilde{m}_1^2$ from~\eqref{eqn:weak_m0_ipw} and~\eqref{eqn:weak_m1_ipw} are consistent for $\moment_0$ and $\moment_1$, respectively. 
Furthermore, the variance estimator $\widehat{\vipwweakhat}$ given by \eqref{eqn:varipw_est_weak} provides a conservative estimate of $\vipwweakhat$, 
and is consistent when the potential outcomes are additive on a log scale, that is, satisfy~\eqref{eqn:log-additive-model}.
\end{theorem}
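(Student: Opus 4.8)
The plan is to establish the three assertions in turn: (i) consistency of the moment estimators $\widetilde m_0^2$ and $\widetilde m_1^2$ for $m_0^2$ and $m_1^2$; (ii) the asymptotic non-negative bias (conservativeness) of $\widehat{\vipwweakhat}$; and (iii) the vanishing of that bias under log-additivity~\eqref{eqn:log-additive-model}. By the symmetry between the treatment and control arms (swapping $K_i\leftrightarrow 1-K_i$, $p_i\leftrightarrow 1-p_i$, and $Y_i(1)\leftrightarrow Y_i(0)$, which turns~\eqref{eqn:weak_m1_ipw} into~\eqref{eqn:weak_m0_ipw}), it suffices to treat $\widetilde m_1^2$ and then read off the control-arm statement. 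Throughout I use that on the treated units $K_iY_i^2 = K_iY_i(1)^2$, so that $\widetilde m_1^2 = \frac{1}{N\widetilde p}\sum_{i=1}^N K_i Y_i(1)^2$ with the $Y_i(1)$ fixed constants.

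For (i), the first step is a martingale decomposition: $\frac1N\sum_i K_iY_i(1)^2 = \frac1N\sum_i p_iY_i(1)^2 + \frac1N\sum_i (K_i-p_i)Y_i(1)^2$. Since $p_i\in\sigmafield$ and $Y_i(1)$ is non-random, $\{(K_i-p_i)Y_i(1)^2\}$ is a martingale difference sequence with respect to $\{\sigmafield\}$, with conditional variance $p_i(1-p_i)Y_i(1)^4 \le M^4/4$ by Assumption~\ref{assn:IPW}(b). Orthogonality of martingale differences gives $\Var\!\big(\frac1N\sum_i(K_i-p_i)Y_i(1)^2\big)\le M^4/(4N)\to0$, so this remainder is $o_P(1)$. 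This reduces consistency to the weighted law of large numbers $\frac1N\sum_i p_iY_i(1)^2\xrightarrow{p}\widetilde p\, m_1^2$, after which $\widetilde m_1^2=\frac1{\widetilde p}\big(\widetilde p\, m_1^2+o_P(1)\big)\xrightarrow{p} m_1^2$.

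Establishing this weighted law of large numbers is where I expect the real difficulty to lie, and it is the point that distinguishes the weak case from the strong one. Under strong stability one has $p_i\xrightarrow{p}\pstar$, so $p_i$ may be replaced by the constant $\pstar$ up to a uniformly bounded error that averages to zero, immediately giving $\frac1N\sum_ip_iY_i(1)^2\to\pstar m_1^2$ (cf.\ Theorem~\ref{corr:vipw_est}). Under weak stability the $p_i$ need not converge, and the deterministic weights $Y_i(1)^2$ could in principle line up with the oscillations of $p_i$; the bare convergence $\frac1N\sum_i p_i\to\widetilde p$ in~\eqref{eqn:extra_assumption} is not by itself enough to control the weighted average. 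The substance of the argument is therefore to show that the inclusion probabilities asymptotically decorrelate from the fixed potential-outcome sequence, i.e.\ $\frac1N\sum_i(p_i-\widetilde p)Y_i(1)^2\to0$; this is the role played by~\eqref{eqn:extra_assumption} in the form actually needed here, and for the paper's weakly stable example (Efron's design) it follows from the stationarity and mixing of the treatment–control imbalance process, which makes $p_i$ behave like a stationary sequence uncorrelated with the unit labels. I would carry this out via an $L^2$/covariance-decay estimate on the imbalance chain.

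Given (i), parts (ii) and (iii) are short. The first two terms of $\widehat{\vipwweakhat}$ in~\eqref{eqn:varipw_est_weak} converge by the continuous mapping theorem to $m_0^2\,\ptwostar/(1-\ptwostar)$ and $m_1^2\,(1-\ponestar)/\ponestar$, which are exactly the first two terms of $\vipwweak$ in~\eqref{eqn:vipw_cases}, while the cross term satisfies $2\widetilde m_0\widetilde m_1\xrightarrow{p} 2m_0m_1$. Applying the Cauchy–Schwarz inequality to the fixed sequences $\{Y_i(0)\}$ and $\{Y_i(1)\}$ gives $m_{01}\le m_0m_1$, so $\widehat{\vipwweakhat}\xrightarrow{p}\vipwweak+2(m_0m_1-m_{01})\ge\vipwweak$, establishing the claimed asymptotic non-negative bias, hence conservativeness. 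For (iii), log-additivity $Y_i(1)=cY_i(0)$ makes the two sequences proportional, so Cauchy–Schwarz holds with equality: one finds $m_{01}=c\,m_0^2$ and $m_0m_1=|c|\,m_0^2$, whence the bias $2(m_0m_1-m_{01})=2(|c|-c)m_0^2$ vanishes (for $c\ge0$), and $\widehat{\vipwweakhat}$ is consistent for $\vipwweak$. The weighted law of large numbers in the third paragraph is the main obstacle; the rest is routine.
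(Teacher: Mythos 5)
Your proposal follows the same architecture as the paper's own proof: a martingale-difference decomposition to pass from $K_i$ to $p_i$, a weighted law of large numbers $\frac{1}{N}\sum_{i} p_i Y_i(1)^2 \xrightarrow{p} \widetilde{p}\, m^2_{1}$, the Cauchy--Schwarz bound $|m_{01}| \le m_0 m_1$ for conservativeness, and its equality case for consistency under log-additivity. Your parts (ii) and (iii) coincide with the paper's argument, and your remark that the bias $2(m_0 m_1 - m_{01})$ vanishes only when the proportionality constant $c$ is nonnegative is a point the paper glosses over.

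The step you isolate as the real difficulty is not merely the crux --- it is a genuine hole in the paper, and your suspicion is fully vindicated. The paper settles the weighted LLN by citing Lemma~\ref{lemma:aiibii}: if $a_i$ are bounded random variables with $\frac{1}{n}\sum_i a_i \xrightarrow{p} a^*$ and $b_i$ are bounded reals with $\frac{1}{n}\sum_i b_i \to b^*$, then $\frac{1}{n}\sum_i a_i b_i \xrightarrow{p} a^* b^*$. That lemma is false: take $a_i = b_i = \mathbf{1}\{i\ \text{even}\}$, so both Ces\`{a}ro means tend to $\tfrac{1}{2}$ while $\frac{1}{n}\sum_i a_i b_i \to \tfrac{1}{2} \neq \tfrac{1}{4}$; the flaw in the paper's proof of the lemma is the bound $\left|\frac{1}{n}\sum_i(a_i-a^*)b_i\right| \le L\left|\frac{1}{n}\sum_i(a_i-a^*)\right|$, which illegitimately pulls $b_i$ outside while keeping the cancellation inside the absolute value. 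Indeed, Theorem~\ref{corr:vipw_est_weak} itself fails at its stated generality, for exactly the reason you give: let $p_i = \tfrac{1}{4}$ for odd $i$ and $p_i = \tfrac{3}{4}$ for even $i$ (deterministic, hence a valid sequential design), and $Y_i(1) = Y_i(0) = \mathbf{1}\{i\ \text{odd}\}$. This design is weakly stable with $p_1^\star = \tfrac{3}{8}$ and $p_2^\star = \tfrac{5}{8}$, satisfies~\eqref{eqn:extra_assumption} with $\widetilde{p} = \tfrac{1}{2}$ and Assumption~\ref{assn:IPW} with $m^2_1 = \tfrac{1}{2}$, yet $\widetilde{m}_1^2 = \frac{2}{N}\sum_{i\ \text{odd}} K_i \xrightarrow{p} \tfrac{1}{4} \neq m^2_1$. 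So the decorrelation condition you identify, $\frac{1}{N}\sum_i (p_i - \widetilde{p})\,Y_i(1)^2 \to 0$, is not a technical step derivable from the stated hypotheses but a missing additional assumption: it must either be appended to the theorem or verified design-by-design, as in your proposed geometric-ergodicity argument for Efron's imbalance chain (which is sound, since covariances of $p_i$ decay geometrically there, so the weighted average concentrates around its stationary mean). Note that the same defective lemma also underlies the weak-stability case of Theorem~\ref{Thm:ipw}, so any repair propagates there as well. In short, your strategy is correct, and where your proof is incomplete it is incomplete only because the theorem as stated is not true; flagging that step rather than waving it through was the right call.
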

Refer to Section~\ref{sec:Proof-of-Corr:vipw_est_weak} of the Supplementary Material for a proof of the theorem. 

\begin{remark}
\label{remark:ipw_weak_remark}
If, for a weakly stable design, the limiting values \(\ponestar, \ptwostar,\) and \(\widetilde{p}\) are unknown or difficult to compute explicitly, we need to estimate them. Since $p_i \in \sigmafield$, the inclusion probability is deterministically known to the experimenter given the history. We therefore propose the following intuitive and consistent estimators for $\ponestar, \ptwostar$ and $\widetilde{p}$: 
\begin{equation}
\widehat{p}_1^\star = \frac{1}{\tfrac{1}{N}\sum_{i=1}^N \tfrac{1}{p_i}}, \qquad
\widehat{p}_2^\star = 1 - \frac{1}{\tfrac{1}{N}\sum_{i=1}^N \tfrac{1}{1-p_i}} \qquad \text{and} \qquad \widetildepest = \frac{1}{N}\sum_{i=1}^N p_i.
\label{eqn:weak_stab_consistent_ponetwo}
\end{equation}
Consistency of these estimators under weak stability is established in Section~\ref{sec:Proof-of-Corr:vipw_est_weak} of the Supplementary Material (as a part of proof of Theorem \ref{corr:vipw_est_weak}).
Substitution of estimators \eqref{eqn:weak_stab_consistent_ponetwo} into \eqref{eqn:varipw_est_weak} yields the following conservative estimator of $\vipw$ under weak stability when $\ponestar, \ptwostar$ and $\widetilde{p}$ are unknown:
\begin{align}
    \widetilde{\vipwweakhat} = \widetilde{m}_{0}^2 \left(\frac{1}{N}\sum_{i=1}^N \frac{1}{1-p_i} - 1 \right) + \widetilde{m}_{1}^2 \left(\frac{1}{N}\sum_{i=1}^N \frac{1}{p_i} - 1 \right) + 2 \widetilde{m}_{0}\widetilde{m}_{1},
\label{eqn:varipw_est_weak_unknown}
\end{align}
where $\widetilde{m}_{0}^2 = \displaystyle \frac{1}{\sum_{i=1}^N(1-p_i)}\sum_{i=1}^N (1-K_i) Y_i^2$ and $\widetilde{m}_{1}^2 = \displaystyle \frac{1}{\sum_{i=1}^N p_i}\sum_{i=1}^N K_i Y_i^2.$
\end{remark}

%\vspace{10pt}
Since we have constructed conservative variance estimators of $\vipw$, for any target level $\alpha \in (0,1)$, an asymptotically conservative confidence interval for $\ate$, that is, one with coverage at least $(1-\alpha)$ can be constructed as
\begin{align*} 
\lim_{N\rightarrow \infty} \Prob \left( \ate \in \left[ \snipw - z_{1 - \alpha/2}  \sqrt{\frac{\widehat{V}}{N}}, \;\; \snipw + z_{1 - \alpha/2}  \sqrt{\frac{\widehat{V}}{N}} \right] \right) \geq 1 - \alpha, 
\end{align*}
where $\widehat{V} = \widehat{\vipwstronghat}$ or $\widehat{\vipwweakhat}$, and 
$z_{1 - \alpha/2}$ denotes the $(1 - \alpha/2)$th quantile of the standard normal distribution. 
Moreover, if the potential outcomes satisfy the log-additive treatment-effect~\eqref{eqn:log-additive-model}, 
the inequality holds with equality, yielding asymptotically exact coverage.

%\end{remark}}
%\begin{corollary}\label{corr:vipw_est_weak} For weakly stable designs (Definition~\eqref{def:weak-design-stability}), the estimators $\widehat{p}_1^\star$ and $\widehat{p}_2^\star$ from~\eqref{eqn:weak_stab_consistent_ponetwo} are consistent for $\ponestar$ and $\ptwostar$, respectively. 
%Moreover, under the additional restriction~\eqref{eqn:extra_assumption}, the estimators $\widehat{m}_0^2$ and $\widehat{m}_1^2$ from~\eqref{eqn:weak_m0_ipw} and~\eqref{eqn:weak_m1_ipw} are consistent for $\moment_0$ and $\moment_1$, respectively. 
%Additionally, for any target level $\alpha \in (0,1),$
%
%\begin{align*}
%    \lim_{N\rightarrow \infty} \Prob \left( \ate \in \left[ \snipw -  z_{1 - \alpha/2} \cdot \sqrt{\frac{\widehat{\vipwweakhat}}{N}}, \;\; \snipw +  z_{1 - \alpha/2} \cdot \sqrt{\frac{\widehat{\vipwweakhat}}{N}}  \right]  \right) \geq 1 - \alpha,
 %\end{align*}
 %where $z_{1 - \alpha/2}$ is the $1 - \frac{\alpha}{2}$-th quantile of the standard normal distribution. \textcolor{red}{Moreover, under perfect linear dependence of potential outcomes~\eqref{eqn:linearly-dependent-model}, the coverage bound inequality holds with an equality.}
 %\textcolor{red}{@Saikat: Please fix the proof of Corollary 1 to match the statement.}
%\end{corollary}
%\textcolor{red}{Write proof in Appendix.}

\subsection{The AIPW-type estimator}
\label{sec:aipw_results}
We now analyze the asymptotic behavior of the AIPW estimator $\snaipw$, as defined in~\eqref{eqn:AIPW}.
\begin{assumption}
\label{assn:AIPW}
The inclusion probabilities and potential outcomes satisfy the following regularity conditions:
\begin{enumerate}[label=(\alph*)]
    \item There exists $\delta \in (0,1)$ such that $p_i \in [\delta, 1 - \delta]$ for all $i \geq 1$.     
    \item There exists a constant $M > 0$ such that 
    \begin{align*}
        |Y_i(\ell)| \leq M \quad \text{for all } i \geq 1 \text{ and } \ell \in \{0,1\}.
    \end{align*}
    \item  The following limits exist for $\ell \in \{0,1\}:$
\begin{subequations}
\begin{align*}
\lim_{N \to \infty} \Ybar_{N}(\ell) = \Ybar_{\ell}, \qquad \lim_{N \to \infty} \frac{1}{N} \sum_{i=1}^N \left( Y_i(\ell) - \Ybar_{N}(\ell) \right)^2 = \Yvar_{\ell}, \\  \text{and} \qquad \lim_{N \to \infty} \frac{1}{N} \sum_{i=1}^N \left( \Yizero - \Ybar_{N}(0) \right)\left( \Yione - \Ybar_{N}(1) \right) = \Ycov,
\end{align*}
\end{subequations}
where $\Yvar_{0}, \Yvar_{1} >0$ and $\Ybar_{0}, \Ybar_{1}, \Ycov \in \mathbb{R}$.
\end{enumerate}
\end{assumption}
The first two conditions in Assumption~\ref{assn:AIPW} are same as that of Assumption~\ref{assn:IPW}. The last condition above ensures that the limiting variance of the AIPW estimator exists.  With this set-up, we state the asymptotic behavior of the AIPW estimator $\snaipw$.

\begin{theorem}
\label{Thm:aipw}
Suppose Assumption~\ref{assn:AIPW} holds, and the sequential design with inclusion probabilities $\{p_i\}_{i \geq 1}$ is either strongly or weakly stable in the sense of Definition~\ref{def:design-stability} or Definition~\ref{def:weak-design-stability}, respectively. Then the AIPW estimator~\eqref{eqn:AIPW} satisfies
\begin{equation}
\sqrt{N}\left(\snaipw - \ate\right) \xrightarrow{d} \mathcal{N}\left(0, \vaipw\right),
\label{cltipw}
\end{equation}
with asymptotic variance
\begin{align}
\vaipw =
\begin{cases}
\vaipwstrong = \Yvar_{0}\dfrac{\pstar}{1-\pstar} + \Yvar_{1}\dfrac{1-\pstar}{\pstar} + 2\Ycov & \text{under strong design stability}, \\[1.2em]
\vaipwweak = \Yvar_{0}\dfrac{\ptwostar}{1-\ptwostar} + \Yvar_{1}\dfrac{1-\ponestar}{\ponestar} + 2\Ycov & \text{under weak design stability}.
\end{cases}
\label{eqn:vaipw_cases}
\end{align}
\end{theorem}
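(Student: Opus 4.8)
The plan is to prove the central limit theorem via the martingale central limit theorem \cite[Chapter 3]{Hall1980-ny}, following the IPW case but tracking the augmentation terms carefully. Writing $D_i$ for the $i$-th bracketed summand of $\snaipw$ in~\eqref{eqn:AIPW}, I would first decompose $\sqrt{N}(\snaipw - \ate) = \tfrac{1}{\sqrt{N}}\sum_{i=1}^N \xi_i$ with $\xi_i = D_i - \taui$. Since $p_i$ and $\widehat{Y}_{i-1}(\ell)$ are $\sigmafield$-measurable and $\E[K_i Y_i \mid \sigmafield] = p_i Y_i(1)$, $\E[(1-K_i)Y_i \mid \sigmafield] = (1-p_i)Y_i(0)$, a direct computation gives $\E[D_i \mid \sigmafield] = Y_i(1) - Y_i(0) = \taui$, so $\{\xi_i\}$ is a martingale difference array with respect to $\{\mathcal{F}_i\}$. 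It then remains to verify (i) convergence of the summed conditional variances $\tfrac{1}{N}\sum_i \E[\xi_i^2 \mid \sigmafield]$ to $\vaipw$, and (ii) a Lindeberg condition.

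A useful preliminary simplification is to collect the augmentation terms and rewrite $D_i = K_i\tfrac{Y_i(1)-\mu_i}{p_i} + (1-K_i)\tfrac{\mu_i - Y_i(0)}{1-p_i}$, where $\mu_i = (1-p_i)\widehat{Y}_{i-1}(1) + p_i \widehat{Y}_{i-1}(0) \in \sigmafield$. A one-line conditional-variance computation then yields $\E[\xi_i^2\mid\sigmafield] = \tfrac{1-p_i}{p_i}(Y_i(1)-\mu_i)^2 + \tfrac{p_i}{1-p_i}(\mu_i - Y_i(0))^2 - 2(Y_i(1)-\mu_i)(\mu_i - Y_i(0))$.

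The heart of the argument is the behaviour of $\mu_i$. I would first show, independently of any stability assumption, that $\widehat{Y}_{i-1}(\ell) \to \Ybar_\ell$: the summands $\tfrac{K_j Y_j}{p_j} - Y_j(\ell)$ form a bounded martingale difference sequence by Assumption~\ref{assn:AIPW}(a),(b), so $\widehat{Y}_{i-1}(\ell) - \Ybar_{i-1}(\ell) \to 0$ almost surely by Kronecker's lemma, and $\Ybar_{i-1}(\ell)\to\Ybar_\ell$ by Assumption~\ref{assn:AIPW}(c). Consequently $\mu_i$ may be replaced by $\mu_i^\star = (1-p_i)\Ybar_1 + p_i\Ybar_0$ up to a term that is $o(1)$, bounded, and hence negligible in the Cesàro average. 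Writing $\Delta = \Ybar_1 - \Ybar_0$ and the centered outcomes $\widetilde{Y}_i(\ell) = Y_i(\ell) - \Ybar_\ell$, one has $Y_i(1) - \mu_i^\star = \widetilde{Y}_i(1) + p_i\Delta$ and $\mu_i^\star - Y_i(0) = (1-p_i)\Delta - \widetilde{Y}_i(0)$; substituting these and expanding, \emph{every term involving $\Delta$ cancels}, leaving $\E[\xi_i^2\mid\sigmafield] = \tfrac{1-p_i}{p_i}\widetilde{Y}_i(1)^2 + \tfrac{p_i}{1-p_i}\widetilde{Y}_i(0)^2 + 2\widetilde{Y}_i(1)\widetilde{Y}_i(0) + o(1)$. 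This cancellation is the crucial step and is precisely what replaces the raw second moments of the IPW asymptotic variance by the centered variances $\Yvar_0,\Yvar_1$ and covariance $\Ycov$.

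It then remains to average over $i$. The cross term contributes $2\Ycov$ by Assumption~\ref{assn:AIPW}(c) in both cases. Under strong stability each weight $\tfrac{1-p_i}{p_i}$ converges to the constant $\tfrac{1-\pstar}{\pstar}$, so a bounded-convergence Cesàro argument gives $\tfrac{1}{N}\sum_i \tfrac{1-p_i}{p_i}\widetilde{Y}_i(1)^2 \to \tfrac{1-\pstar}{\pstar}\Yvar_1$ and similarly for the control term, producing $\vaipwstrong$. The Lindeberg condition is immediate: since $|Y_i(\ell)|\le M$, $p_i\in[\delta,1-\delta]$ and $|\widehat{Y}_{i-1}(\ell)|\le M/\delta$, the increments $|\xi_i|$ are uniformly bounded, so the truncated sum vanishes identically for large $N$. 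The main obstacle is the weak-stability case: here $p_i$ need not converge, and one must establish the \emph{decoupling} $\tfrac{1}{N}\sum_i\tfrac{1-p_i}{p_i}\widetilde{Y}_i(1)^2 \to \tfrac{1-\ponestar}{\ponestar}\Yvar_1$ and $\tfrac{1}{N}\sum_i \tfrac{p_i}{1-p_i}\widetilde{Y}_i(0)^2 \to \tfrac{\ptwostar}{1-\ptwostar}\Yvar_0$, i.e., that the fluctuations of the inclusion probabilities are asymptotically uncorrelated with the fixed sequence of centered outcomes, so that the weighted averages factor into the product of the limiting weight averages from Definition~\ref{def:weak-design-stability} and the limiting outcome variances from Assumption~\ref{assn:AIPW}(c). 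Establishing this factorization rigorously — leveraging the weak-stability convergence in probability together with the boundedness of the outcomes — is the delicate part of the proof, after which the martingale CLT delivers~\eqref{eqn:vaipw_cases}.
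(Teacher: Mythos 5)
Your proposal takes a genuinely different route from the paper, and for the strong-stability case it is correct and arguably more self-contained. The paper explicitly avoids applying the martingale CLT to $\snaipw$ directly: it introduces the proxy $\tnaipw$, in which the random augmentation terms $\widehat{Y}_{i-1}(\ell)$ are replaced by the \emph{deterministic} prefix means $\overline{Y}_{i-1}(\ell)=\tfrac{1}{i-1}\sum_{j<i}Y_j(\ell)$, proves the CLT for the proxy, bounds $\E[(\snaipw-\tnaipw)^2]=\mathcal{O}(\log N/N^2)$ against $\Var[\tnaipw]=\mathcal{O}(1/N)$, and transfers the limit via H\'ajek's lemma (Lemma~\ref{lemma:hajek_proof}). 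You instead observe, correctly, that the AIPW summands minus $\tau_i$ already form a martingale difference array, so no proxy is needed; you then neutralize the randomness of $\widehat{Y}_{i-1}(\ell)$ \emph{inside the conditional variance} by (i) the martingale-SLLN/Kronecker argument giving $\widehat{Y}_{i-1}(\ell)\to\Ybar_\ell$ a.s.\ (valid; the paper proves only the in-probability analogue, via Chebyshev, in Lemma~\ref{lemma:bigintosmall}), and (ii) the exact cancellation of all terms involving $\Delta=\Ybar_1-\Ybar_0$ after substituting $\mu_i^\star=(1-p_i)\Ybar_1+p_i\Ybar_0$. I checked this algebra: the $\Delta$-linear and $\Delta^2$ terms do cancel identically, leaving $\tfrac{1-p_i}{p_i}\widetilde{Y}_i(1)^2+\tfrac{p_i}{1-p_i}\widetilde{Y}_i(0)^2+2\widetilde{Y}_i(1)\widetilde{Y}_i(0)$ up to a Ces\`aro-negligible error; this cancellation plays exactly the role that the proxy construction plays in the paper, namely reducing the conditional variance to bounded random weights multiplying a \emph{deterministic} Ces\`aro-convergent sequence. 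From there, the strong-stability limit follows from the paper's Lemma~\ref{lemma:aibi} (termwise convergence in probability of the weights), and your Lindeberg argument via uniform boundedness matches the paper's.

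There is, however, a genuine gap in the weak-stability case, and you have flagged it yourself without closing it: you reduce the problem to the factorization $\tfrac{1}{N}\sum_{i}\tfrac{1-p_i}{p_i}\widetilde{Y}_i(1)^2 \xrightarrow{p} \tfrac{1-p_1^\star}{p_1^\star}\,\Yvar_1$ (and its control analogue), but give no argument for it. This statement is precisely the paper's Lemma~\ref{lemma:aiibii}: bounded random $a_i$ with $\tfrac{1}{n}\sum_i a_i\xrightarrow{p}a^*$, bounded deterministic $b_i$ with $\tfrac{1}{n}\sum_i b_i\to b^*$, conclude $\tfrac{1}{n}\sum_i a_ib_i\xrightarrow{p}a^*b^*$; your reduction places you exactly in its hypotheses, since your $b_i=\widetilde{Y}_i(1)^2$ are deterministic. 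So the missing step is precisely the cross-average lemma the paper invokes, and your proof is incomplete without proving or citing such a result. Your instinct that this step is delicate is well founded: Ces\`aro convergence of the two factors does not, in general, imply Ces\`aro convergence of their products (take $a_i=b_i=(-1)^i$), so genuine additional input is required. Indeed, even the paper's own proof of Lemma~\ref{lemma:aiibii} is suspect at exactly this point --- it bounds $\bigl|\tfrac{1}{n}\sum_i(a_i-a^*)b_i\bigr|$ by $L\bigl|\tfrac{1}{n}\sum_i(a_i-a^*)\bigr|$, which is not a valid inequality --- so this decoupling is the weakest link in both your argument and the paper's, and closing it rigorously (e.g., by exploiting mixing of the assignment process, as holds for Efron's design) would require work beyond what either writes down.
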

%\textcolor{red}{Above looks good? TD comments: OK with me.}

%A few comments regarding Theorem~\ref{Thm:aipw} are in order. 

\begin{remark}
The proof of Theorem~\ref{Thm:aipw} requires a different strategy from that of Theorem~\ref{Thm:ipw}, since the AIPW estimator is not directly amenable to martingale central limit theorem. To handle this, we introduce a proxy estimator, which is interesting in its own right, that is analytically more tractable and can be expressed as sum of a martingale difference sequence, allowing the martingale central limit theorem to establish its asymptotic normality. The key step is then to show that the difference between the proxy and the actual AIPW estimator is asymptotically negligible, using variance bounds and H\'{a}jek's lemma (see Lemma~\ref{lemma:hajek_proof} of the Supplementary Material). This ensures that the asymptotic distribution of the AIPW estimator coincides with that of the proxy, yielding the stated central limit theorem with variance given in Theorem~\ref{Thm:aipw}. Refer to Section~\ref{sec:Proof-of-Thm:aipw} of the Supplementary Material for detailed proof of the above theorem.
\end{remark}

\begin{remark}
\label{rem:aipwgipw}
Note that $\vaipw \leq \vipw$; that is, $\snaipw$ is more efficient than $\snipw$. This fact clearly establishes the superiority of the AIPW estimator over the IPW estimator in finite-population design-based inference under the adaptive assignment mechanism defined in (\ref{eqn:sequential-assign}). However, the corresponding variance estimators (defined later for AIPW estimators) may not inherit this ordering. See Section~\ref{sec:onthelack} of the Supplementary Material for a detailed discussion.
\end{remark}

\medskip
We now turn to the problem of estimating the asymptotic variance $\vaipw$. Under strong design stability (Definition~\ref{def:design-stability}) with known $\pstar$, estimation of $\vaipwstrong$ in \eqref{eqn:vaipw_cases} requires estimation of $\Yvar_0$, $\Yvar_1$ and $\Ycov$. As earlier, the covariance term $\Ycov$ depends on both potential outcomes for the same unit and therefore cannot be estimated without additional restrictions. Analogous to the estimation of $\vipw$ in Section \ref{sec:ipw_results}, we invoke Cauchy-Schwarz inequality to get $|\Ycov| \le \sigma_{0}\sigma_{1}$, yielding the following conservative estimator of $\vaipwstrong$ as follows:
\begin{align}
\widehat{\vaipwstronghat}
= \left( \widehat{\sigma}_0 \sqrt{ \frac{\pstar}{1 - \pstar}} +\widehat{\sigma}_1 \sqrt{\frac{1 - \pstar}{\pstar} }\right)^2,
\label{eqn:vaipw_liberal}
\end{align}
where $\widehat{\sigma}^2_0$ and $\widehat{\sigma}^2_1$ are estimators of $\sigma_0$ and $\sigma_1$ that are consistent under strong design stability. We propose the following estimators:
\begin{subequations}
\label{eqn:phat-aipw}
\begin{align}
%\widehat{p}^\star 
%  &= \frac{1}{N} \sum_{i=1}^N p_i, \label{eq:pstar} \\[6pt]
\widehat{\sigma}^2_0 
  &= \frac{1}{\max\{N_0,1\}} \sum_{i=1}^N (1-K_i)\big( Y_i - \widehat{Y}_{i-1}(0) \big)^2, \label{eq:sigma0} \\[6pt]
\widehat{\sigma}^2_1 
  &= \frac{1}{\max\{N_1,1\}} \sum_{i=1}^N K_i\big( Y_i - \widehat{Y}_{i-1}(1) \big)^2,
  \label{eq:sigma1}
\end{align}
\end{subequations}
where $N_1 = \sum_{i=1}^N K_i$ and $N_0 = N - N_1$.
We set $\widehat{Y}_{1}(0) = \widehat{Y}_{1}(1) = 0$, and for $i \geq 2$ define
\begin{align*}
\widehat{Y}_{i-1}(0) = \frac{1}{i-1} \sum_{j<i} \frac{(1-K_j)Y_j}{1-p_j}, 
\qquad 
\widehat{Y}_{i-1}(1) = \frac{1}{i-1} \sum_{j<i} \frac{K_jY_j}{p_j}.  \end{align*}
The variance estimator in~\eqref{eqn:vaipw_liberal}, which incorporates the estimators in~\eqref{eqn:phat-aipw}, is consistent when the potential outcomes satisfy the generalized treatment effect homogeneity condition in Definition~\ref{definition:additive-model}. The preceding discussion leads to the following theorem.

%\textcolor{red}{Additivity is one case under which exact coverage holds but its not the only case. Do we mention that? Its just the equality case of Cauchy-Schwarz inequality. Moreover, is there a need to mention the perfect linear dependence case in IPW Section?}
%\TD{When the potential outcomes are additive, i.e., satisfy (\ref{eqn:additive-model}), then the variance estimator $\widehat{\vaipwstronghat}$ is a consistent estimator of $\vaipw$.} The preceding discussion leads to the following corollary. %i.e. $\displaystyle \frac{\widehat{\vaipwstronghat}}{\vaipw} \stackrel{\mathbb{P}}{\rightarrow} 1$. 

\begin{theorem} \label{corr:vaipw_est}
For strongly stable designs (Definition~\ref{def:design-stability}), the estimators $\widehat{\sigma}^2_0$ and $\widehat{\sigma}_1^2$ defined in~\eqref{eqn:phat-aipw} 
are consistent for $\Yvar_0$ and $\Yvar_1$, respectively. 
Furthermore, the variance estimator $\widehat{\vaipwstronghat}$ given by~\eqref{eqn:vaipw_liberal} provides a conservative estimate of $\vaipwstrong$, 
and is consistent when the potential outcomes satisfy generalized treatment effect homogeneity~\eqref{eqn:gen-additive-model}.
\end{theorem}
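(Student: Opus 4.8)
The plan is to establish the three assertions in order: (i) consistency of $\widehat{\sigma}_0^2$ and $\widehat{\sigma}_1^2$ for $\Yvar_0$ and $\Yvar_1$; (ii) conservativeness of $\widehat{\vaipwstronghat}$; and (iii) tightness of the bound under generalized treatment effect homogeneity. Claims (ii) and (iii) follow quickly from (i) by the continuous mapping theorem together with the Cauchy--Schwarz inequality, so the substance of the argument is (i). I will prove consistency of $\widehat{\sigma}_1^2$ in detail, the case of $\widehat{\sigma}_0^2$ being symmetric. Using $Y_i = Y_i(1)$ on the event $\{K_i = 1\}$ from~\eqref{eqn:potential_outcomes}, I would first rewrite $\widehat{\sigma}_1^2 = \frac{1}{\max\{N_1,1\}}\sum_{i=1}^N K_i (Y_i(1) - \widehat{Y}_{i-1}(1))^2$, and record that $N_1/N \xrightarrow{p} \pstar \in (0,1)$ (so that $\max\{N_1,1\}=N_1$ with probability tending to one), since $\frac{1}{N}\sum_i K_i = \frac{1}{N}\sum_i p_i + \frac{1}{N}\sum_i (K_i - p_i)$, where the first term converges to $\pstar$ by strong stability and the second is a vanishing martingale average.

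The first building block is that the running plug-in $\widehat{Y}_{i-1}(1) = \frac{1}{i-1}\sum_{j<i} \frac{K_j Y_j(1)}{p_j}$ converges in probability to the limiting mean $\Ybar_1$. To see this I would split $\frac{K_j Y_j(1)}{p_j} = Y_j(1) + \frac{(K_j - p_j)Y_j(1)}{p_j}$; the Cesàro average of the first term converges to $\Ybar_1$ by Assumption~\ref{assn:AIPW}(c), while the second term is a martingale difference sequence with increments bounded uniformly by $M/\delta$ (using Assumptions~\ref{assn:AIPW}(a)--(b)), so its normalized partial sums vanish by a martingale law of large numbers.

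Next I would expand the square around $\Ybar_1$, writing $(Y_i(1) - \widehat{Y}_{i-1}(1))^2 = (Y_i(1) - \Ybar_1)^2 + 2(Y_i(1) - \Ybar_1)(\Ybar_1 - \widehat{Y}_{i-1}(1)) + (\Ybar_1 - \widehat{Y}_{i-1}(1))^2$. For the leading term, since $\E[K_i \mid \sigmafield] = p_i$ and $(Y_i(1)-\Ybar_1)^2$ is deterministic, $\frac{1}{N}\sum_i K_i (Y_i(1) - \Ybar_1)^2$ equals $\frac{1}{N}\sum_i p_i (Y_i(1) - \Ybar_1)^2$ plus a vanishing martingale remainder, and the former converges to $\pstar \Yvar_1$ by strong stability combined with Assumption~\ref{assn:AIPW}(c) (a bounded-convergence argument handles $p_i - \pstar$ and the replacement of $\Ybar_N(1)$ by $\Ybar_1$). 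Dividing by $N_1/N \xrightarrow{p} \pstar$ yields $\Yvar_1$. The two remaining terms are controlled by observing that $\Ybar_1 - \widehat{Y}_{i-1}(1)$ is uniformly bounded and tends to zero in probability for each $i$; since bounded sequences converging to zero in probability have $L^1$-norms converging to zero, their Cesàro averages (weighted by the bounded indicators $K_i$) vanish. This gives $\widehat{\sigma}_1^2 \xrightarrow{p} \Yvar_1$, completing claim (i).

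For claim (ii), expanding $\widehat{\vaipwstronghat} = \widehat{\sigma}_0^2 \frac{\pstar}{1-\pstar} + \widehat{\sigma}_1^2 \frac{1-\pstar}{\pstar} + 2\widehat{\sigma}_0 \widehat{\sigma}_1$ and applying the continuous mapping theorem, its probability limit is $\Yvar_0 \frac{\pstar}{1-\pstar} + \Yvar_1 \frac{1-\pstar}{\pstar} + 2\sigma_0\sigma_1$, which dominates $\vaipwstrong$ exactly because Cauchy--Schwarz gives $\Ycov \le \sigma_0\sigma_1$. For claim (iii), condition~\eqref{eqn:gen-additive-model} makes the centered potential-outcome vectors collinear, so Cauchy--Schwarz holds with equality, $\Ycov = \sigma_0\sigma_1$, and the two limits coincide, giving consistency. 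The main obstacle I anticipate is claim (i), and within it the interplay between the sequentially-defined, random plug-in $\widehat{Y}_{i-1}(\ell)$ (which varies with $i$) and the adaptive indicators $K_i$ inside the outer average: care is needed to ensure the pointwise-in-$i$ convergence $\widehat{Y}_{i-1}(\ell) \to \Ybar_\ell$ survives the outer averaging against the martingale-correlated $K_i$, and the uniform boundedness from Assumptions~\ref{assn:AIPW}(a)--(b) is precisely what makes the bounded-convergence and martingale-LLN steps legitimate.
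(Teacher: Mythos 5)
Your proposal is correct and follows essentially the same route as the paper's proof: consistency of $\widehat{\sigma}^2_0,\widehat{\sigma}^2_1$ via martingale-difference (Chebyshev) replacement of $K_i$ by $p_i$ together with Ces\`aro-in-probability arguments, conservativeness from the Cauchy--Schwarz bound $\Ycov \le \sigma_0\sigma_1$ combined with the continuous mapping theorem, and exactness from the equality case of Cauchy--Schwarz under generalized treatment effect homogeneity. The only difference is organizational: you center the squared residuals directly at the limiting mean $\Ybar_1$, whereas the paper reduces $\widehat{Y}_{i-1}(1)$ first to the deterministic running mean $\overline{Y}_{i-1}(1)$ (Lemma~\ref{lemma:bigintosmall}) and then to the limit (Lemma~\ref{lemma:derived_from_ass_aipw_1}); indeed, your inlined treatment of the $K_i$-to-$p_i$ replacement is spelled out more explicitly than in the paper's own proof of this theorem.
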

A proof of this theorem is provided in Section~\ref{sec:Proof-of-Corr:vaipw_est} of the Supplementary Material. A similar remark as in Remark~\ref{rem:hw} applies.

%\begin{corollary}
%\label{corr:vaipw_est}
%For strongly stable designs (Definition~\eqref{def:design-stability}), the estimators $\widehat{p}^\star, \widehat{\sigma}^2_0,$ and $\widehat{\sigma}_1^2$ from~\eqref{eqn:phat-aipw} are consistent for $\pstar, \sigma_0^2, \text{ and } \sigma_1^2,$ respectively. Additionally, for any target level $\alpha \in (0,1)$,
%
%\begin{align}
%\label{eqn:coverage-aipw}
%    \lim_{N\rightarrow \infty} \Prob \left( \ate \in \left[ \snaipw -  z_{1 - \alpha/2} \cdot \sqrt{\frac{\widehat{\vaipwstronghat}}{N}}, \;\; \snaipw +  z_{1 - \alpha/2} \cdot \sqrt{\frac{\widehat{\vaipwstronghat}}{N}}  \right]  \right) \geq 1 - \alpha,
 %\end{align}
 %where $z_{1 - \alpha/2}$ is the $1 - \frac{\alpha}{2}$-th quantile of the standard normal distribution. Moreover, under additivity of potential outcomes~\eqref{eqn:additive-model}, the coverage bound inequality~\eqref{eqn:coverage-aipw} holds with an equality.
 %\textcolor{red}{@Saikat: Please fix the proof of Corollary 2 to match the statement.}
%\end{corollary}

%\vspace{10pt}
As noted in Section~\ref{sec:ipw_results}, under weak design stability (Definition~\ref{def:weak-design-stability}) with known $\ponestar$ and $\ptwostar$, variance estimation is not straightforward, as additional conditions are required for the consistent estimation of $\Yvar_0$ and $\Yvar_1$. 
%The quantities $\ponestar$ and $\ptwostar$ can be consistently estimated by $\widehat{p}_1^\star$ and $\widehat{p}_2^\star$ from~\eqref{eqn:weak_stab_consistent_ponetwo}. 
As before, under the additional assumption~\eqref{eqn:extra_assumption} and with known~$\widetilde{p}$, the variance components~$\Yvar_0$ and~$\Yvar_1$ can be consistently estimated by:
\begin{align}
\widetilde{\sigma}^2_0 
  &= \frac{1}{N(1-\widetilde{p})} \sum_{i=1}^N (1-K_i)\big( Y_i - \widehat{Y}_{i-1}(0) \big)^2, \label{eq:sigma0_aipw_weak} \\[6pt]
\widetilde{\sigma}^2_1 
  &= \frac{1}{N \widetilde{p}} \sum_{i=1}^N K_i\big( Y_i - \widehat{Y}_{i-1}(1) \big)^2. \label{eq:sigma1_aipw_weak}
\end{align}
As discussed previously, the cross-moment term $\Ycov$ cannot be estimated without additional assumptions, since it depends on both potential outcomes for any unit. We therefore bound it from above using the Cauchy–Schwarz inequality. Consequently, asymptotic variance $\vaipwweak$ can be conservatively estimated by:
\begin{align}
\widehat{\vaipwweakhat} = \widetilde{\sigma}_0^2 {\frac{\ptwostar}{1-\ptwostar}} + \widetilde{\sigma}_1^2 {\frac{1-\ponestar}{\ponestar}} + 2\widetilde{\sigma}_0\widetilde{\sigma}_1.
\label{eqn:varaipw_est_weak}
\end{align}
As in the case of strong stability, this estimator is consistent when the potential outcomes satisfy generalized treatment effect additivity as defined in Definition~\ref{definition:additive-model}. The above discussion is summarized in the following theorem.

\begin{theorem}\label{corr:vaipw_est_weak}
For weakly stable designs (Definition~\ref{def:weak-design-stability}), under the sufficient condition~\eqref{eqn:extra_assumption}, the estimators $\widetilde{\sigma}_0^2$ and $\widetilde{\sigma}_1^2$ from~\eqref{eq:sigma0_aipw_weak} and~\eqref{eq:sigma1_aipw_weak} are consistent for $\Yvar_0$ and $\Yvar_1$, respectively. 
Furthermore, the variance estimator $\widehat{\vaipwweakhat}$ given by~\eqref{eqn:varaipw_est_weak} provides a conservative estimate of $\vaipwweak$, 
and is consistent when the potential outcomes satisfy generalized treatment effect additivity~\eqref{eqn:gen-additive-model}.
\end{theorem}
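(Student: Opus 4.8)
The plan is to follow the template of the strong-stability result, Theorem~\ref{corr:vaipw_est}, replacing the single limit $\pstar$ by the pair $(\ponestar,\ptwostar)$ and compensating for the loss of pointwise convergence of the $p_i$ with the averaged condition~\eqref{eqn:extra_assumption}. I first establish consistency of $\widetilde{\sigma}_0^2$; the argument for $\widetilde{\sigma}_1^2$ is symmetric, with $(1-p_i,\,1-\widetilde{p})$ replaced by $(p_i,\,\widetilde{p})$. On control units $Y_i=\Yizero$, so I write $\Yizero-\widehat{Y}_{i-1}(0)=(\Yizero-\Ybar_0)+(\Ybar_0-\widehat{Y}_{i-1}(0))$ and expand the square in the numerator of $\widetilde{\sigma}_0^2$. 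Since $\widehat{Y}_{i-1}(0)\xrightarrow{p}\Ybar_0$ as $i\to\infty$ — a martingale law of large numbers for the conditionally unbiased increments $(1-K_j)Y_j/(1-p_j)$, as in the analysis underlying Theorem~\ref{Thm:aipw} — the boundedness in Assumption~\ref{assn:AIPW}(b) makes the cross term and the remainder $\tfrac{1}{N}\sum_i(1-K_i)(\Ybar_0-\widehat{Y}_{i-1}(0))^2$ Ces\`aro-negligible, reducing the numerator to $\tfrac{1}{N}\sum_{i=1}^N(1-K_i)(\Yizero-\Ybar_0)^2$.

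Next, because $\big[(1-K_i)-(1-p_i)\big](\Yizero-\Ybar_0)^2$ is a bounded martingale difference sequence with respect to $\{\sigmafield\}$, the martingale law of large numbers~\cite{Hall1980-ny} gives
\[
\frac{1}{N}\sum_{i=1}^N(1-K_i)(\Yizero-\Ybar_0)^2-\frac{1}{N}\sum_{i=1}^N(1-p_i)(\Yizero-\Ybar_0)^2\xrightarrow{p}0 .
\]
It then remains to identify the limit of the predictable sum $\tfrac{1}{N}\sum_i(1-p_i)(\Yizero-\Ybar_0)^2$. Writing $(1-p_i)=(1-\widetilde{p})+(\widetilde{p}-p_i)$, the first piece contributes $(1-\widetilde{p})\,\tfrac{1}{N}\sum_i(\Yizero-\Ybar_0)^2\to(1-\widetilde{p})\sigma_0^2$ by Assumption~\ref{assn:AIPW}(c); dividing by the normalizer $N(1-\widetilde{p})$ then yields $\sigma_0^2$.

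The step I expect to be the main obstacle is controlling the residual cross term $\tfrac{1}{N}\sum_i(\widetilde{p}-p_i)(\Yizero-\Ybar_0)^2$, i.e.\ the empirical covariance between the adaptively generated probabilities $p_i$ and the fixed centered squared outcomes. Condition~\eqref{eqn:extra_assumption} pins down only the average of the $p_i$, not its correlation with the unit ordering, so — unlike the strong-stability case, where $p_i\to\pstar$ lets one factor the constant out directly — this term need not vanish for an arbitrary design. The way I would close it is to exploit that $p_i\in\sigmafield$ cannot depend on unit $i$'s own (fixed) potential outcome, together with design-specific mixing: for the weakly stable designs of interest (e.g.\ Efron's design, where symmetry gives $\E[p_i]=\tfrac12=\widetilde{p}$ and the mean-reverting imbalance process yields summable correlations $\Cov(p_i,p_j)$), the deterministic part $\tfrac{1}{N}\sum_i(\widetilde{p}-\E[p_i])(\Yizero-\Ybar_0)^2$ vanishes by a Ces\`aro argument while the variance of the cross term is $O(1/N)$; hence the whole term is $o_p(1)$.

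Finally, conservativeness and the homogeneity claim follow mechanically. By the continuous mapping theorem $\widetilde{\sigma}_0\widetilde{\sigma}_1\xrightarrow{p}\sigma_0\sigma_1$ (using $\sigma_0^2,\sigma_1^2>0$), so~\eqref{eqn:varaipw_est_weak} gives
\[
\widehat{\vaipwweakhat}\xrightarrow{p}\sigma_0^2\frac{\ptwostar}{1-\ptwostar}+\sigma_1^2\frac{1-\ponestar}{\ponestar}+2\sigma_0\sigma_1 .
\]
Comparing with $\vaipwweak$ in~\eqref{eqn:vaipw_cases}, the limiting bias equals $2(\sigma_0\sigma_1-\Ycov)\ge0$ by Cauchy–Schwarz ($|\Ycov|\le\sigma_0\sigma_1$), which is the asserted positive bias. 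Under~\eqref{eqn:gen-additive-model}, writing $\Yione-\overline{Y}_N(1)=c(\Yizero-\overline{Y}_N(0))$ with $c>0$ gives $\Ycov=c\sigma_0^2$ and $\sigma_1=c\sigma_0$, so $\Ycov=\sigma_0\sigma_1$, the Cauchy–Schwarz bound is tight, the bias vanishes, and $\widehat{\vaipwweakhat}$ is consistent for $\vaipwweak$.
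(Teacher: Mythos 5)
Your proposal follows the paper's own proof almost step for step: the reduction of $\widetilde{\sigma}_0^2$ to the predictable average $\tfrac{1}{N}\sum_i (1-p_i)\left(\Yizero-\Ybar_0\right)^2$ (the paper packages your first two steps as Lemma~\ref{lemma:bigintosmall} together with a Chebyshev/martingale-difference replacement of $K_i$ by $p_i$), the Cauchy--Schwarz bound $|\Ycov|\le\sigma_0\sigma_1$ for conservativeness, and the equality analysis under~\eqref{eqn:gen-additive-model} are identical in content. The single point of divergence is the step you flag as the main obstacle. There the paper invokes its Lemma~\ref{lemma:aiibii}: if bounded random variables $a_i$ satisfy $\tfrac1n\sum_i a_i\xrightarrow{p}a^\star$ and bounded reals $b_i$ satisfy $\tfrac1n\sum_i b_i\to b^\star$, then $\tfrac1n\sum_i a_ib_i\xrightarrow{p}a^\star b^\star$ --- which is exactly the assertion that your residual cross term $\tfrac1N\sum_i(\widetilde p-p_i)(\Yizero-\Ybar_0)^2$ vanishes.

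You are right to distrust that step: Lemma~\ref{lemma:aiibii} is false as stated, and the failure propagates to the theorem itself. Take $p_i=\tfrac3{10}$ when $3\mid i$ and $p_i=\tfrac35$ otherwise (deterministic, hence $\sigmafield$-measurable), and $\Yizero=3$ when $3\mid i$, $\Yizero=0$ otherwise, with $\Yione=\Yizero+1$ so that Assumption~\ref{assn:AIPW} holds. This design is weakly stable and satisfies~\eqref{eqn:extra_assumption} with $\widetilde p=\tfrac12$, and $\Yvar_0=2$; yet your (valid) reduction steps give
\begin{align*}
\widetilde{\sigma}_0^2\;\xrightarrow{p}\;\frac{1}{1-\widetilde p}\,\lim_{N\to\infty}\frac1N\sum_{i=1}^N(1-p_i)\left(\Yizero-\Ybar_0\right)^2
=2\left(\tfrac13\cdot\tfrac7{10}\cdot4+\tfrac23\cdot\tfrac25\cdot1\right)=\tfrac{12}{5}\neq 2,
\end{align*}
so the consistency claim fails. (The paper's proof of Lemma~\ref{lemma:aiibii} breaks at the bound $\bigl|\tfrac1n\sum_i(a_i-a^\star)b_i\bigr|\le L\bigl|\tfrac1n\sum_i(a_i-a^\star)\bigr|$, which is not a valid inequality; the same defect affects Theorem~\ref{corr:vipw_est_weak}.) So the obstacle you identified is a genuine gap --- in the paper, not in your reasoning: condition~\eqref{eqn:extra_assumption} constrains only the average of the $p_i$ and cannot rule out correlation between the design and the ordering of the potential outcomes. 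Your Efron-specific patch (exact symmetry $\E[p_i]=\tfrac12$ plus geometric ergodicity of the imbalance chain, giving a cross-term variance of order $O(1/N)$) is correct and covers the only weakly stable design to which the paper actually applies this theorem (Section~\ref{sec:efron}), but it proves a narrower statement than the one claimed. To salvage the theorem at its stated generality, hypothesis~\eqref{eqn:extra_assumption} would have to be strengthened, e.g.\ to: $\tfrac1N\sum_i p_i b_i\xrightarrow{p}\widetilde p\,b^\star$ for every bounded real sequence $\{b_i\}$ with Ces\`aro limit $b^\star$.
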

%\begin{corollary}\label{corr:vaipw_est_weak} For weakly stable designs (Definition~\eqref{def:weak-design-stability}), the estimators $\widehat{p}_1^\star$ and $\widehat{p}_2^\star$ from~\eqref{eqn:weak_stab_consistent_ponetwo} are consistent for $\ponestar$ and $\ptwostar$, respectively. 
%Moreover, under the additional restriction~\eqref{eqn:extra_assumption}, the estimators $\widehat{\sigma}_0^2$ and $\widehat{\sigma}_1^2$ from~\eqref{eq:sigma0_aipw_weak} and \eqref{eq:sigma1_aipw_weak} are consistent for $\Yvar_0$ and $\Yvar_1$, respectively. 
%Additionally, for any target level $\alpha \in (0,1),$
%
%\begin{align}
%    \lim_{N\rightarrow \infty} \Prob \left( \ate \in \left[ \snaipw -  z_{1 - \alpha/2} \cdot \sqrt{\frac{\widehat{\vaipwweakhat}}{N}}, \;\; \snaipw +  z_{1 - \alpha/2} \cdot \sqrt{\frac{\widehat{\vaipwweakhat}}{N}}  \right]  \right) \geq 1 - \alpha,
%\label{eqn:coverage-aipw-weak}
 %\end{align}
 %where $z_{1 - \alpha/2}$ is the $1 - \frac{\alpha}{2}$-th quantile of the standard normal distribution. Moreover, for additive potential outcomes~\eqref{eqn:additive-model}, the coverage bound inequality~\eqref{eqn:coverage-aipw-weak} holds with an equality.
%\end{corollary}
Refer to Section~\ref{sec:Proof-of-Corr:vaipw_est_weak} of the Supplementary Material for a proof of the theorem. 
%\textcolor{red}{Again this should be a separate corollary. You should add this details with a concrete confidence intervals under notion~(11).}

\begin{remark} \label{rem:APIWunknown}
If for a strongly stable design the limiting value $\pstar$ is unknown or difficult to compute explicitly, substitution of the consistent estimator $\widehat{p}^\star$ defined in \eqref{eqn:phat} in place of $\pstar$ into \eqref{eqn:vaipw_liberal} will lead to an estimator of $\vaipwstrong$ with similar properties as in Theorem~\ref{corr:vaipw_est}. If for a weakly stable design the limiting values $\ponestar, \ptwostar$ and $\widetilde{p}$ are unknown or are difficult to compute explicitly, we can estimate them using~\eqref{eqn:weak_stab_consistent_ponetwo}. Substituting these estimators into \eqref{eqn:varaipw_est_weak}, a conservative estimator of $\vipwweak$ under weak stability is: 
\begin{align}
    \widehat{\vaipwweakhat} = \widetilde{\sigma}_0^2 \left(\frac{1}{N}\sum_{i=1}^N \frac{1}{1-p_i} - 1 \right) + \widetilde{\sigma}_0^2 \left(\frac{1}{N}\sum_{i=1}^N \frac{1}{p_i} - 1 \right) + 2 \widetilde{\sigma}_{0} \widetilde{\sigma}_{1},
\label{eqn:varaipw_est_weak_unknown}
\end{align}
where $\widetilde{\sigma}_0^2 = \displaystyle \frac{1}{\sum_{i=1}^N(1-p_i)} \sum_{i=1}^N (1-K_i)\big( Y_i - \widehat{Y}_{i-1}(0) \big)^2$ and $\widetilde{\sigma}_1^2 = \displaystyle \frac{1}{\sum_{i=1}^Np_i} \sum_{i=1}^N K_i\big( Y_i - \widehat{Y}_{i-1}(1) \big)^2.$
\end{remark}

\medskip
As in Section~\ref{sec:ipw_results}, for any target level $\alpha \in (0,1)$, 
asymptotically conservative confidence intervals for~$\ate$ may be constructed for $\snaipw$ 
using the variance estimators~\eqref{eqn:vaipw_liberal} and~\eqref{eqn:varaipw_est_weak} 
for strongly stable and weakly stable designs, respectively. 
If the limiting values of the probabilities are unknown, their counterparts suggested in 
Remark~\ref{rem:APIWunknown} may be used. 
All of these intervals are asymptotically conservative, but attain exact asymptotic coverage when the potential outcomes satisfy generalized treatment effect additivity~\eqref{eqn:gen-additive-model}.

%\textcolor{blue}{
%\begin{remark}
%Even though, as noted in Remark~\ref{rem:aipwgipw}, AIPW is typically more efficient than IPW, their variance estimators do not admit a clear ordering in terms of conservativeness. In practice, this depends on the quality of the regression adjustment, and no general guarantee can be made. For detailed discussion, refer to supplementary material~\ref{moredisc}.
%\end{remark}
%}

\section{Some illustrative applications}
\label{sec:application}
In this section, we illustrate Theorems~\ref{Thm:ipw}-\ref{corr:vaipw_est_weak} through two popular adaptive designs: a strongly stable design, Wei's adaptive coin design~\cite{Wei1978-gd}, and a weakly stable design, Efron's biased coin design~\cite{Efron1971-yh}. We further complement the theoretical results with numerical simulations that demonstrate the validity of our approach.

\subsection{Wei's Adaptive Coin Design}\label{sec:wei}
We begin with Wei’s adaptive coin design~\cite{Wei1978-gd}, which reduces relative imbalance between treatment and control allocations. Formally, let $m_k$ and $n_k$ denote, respectively, the numbers of treatment and control units among the first $k$ subjects. Define the treatment-control imbalance up to the $k$th assignment as $D_k = m_k - n_k = 2 m_k - k$, and the corresponding normalized imbalance $R_k = D_k/k,$ which measures the average difference between the treatment and control groups up to stage $k$. The $i$th subject is assigned to treatment with probability  
\begin{align}
\label{eqn:wei-design}
p_{i} = f\left(R_{i-1}\right),
\end{align}  
where $f : [-1,1] \to [0,1]$ is a non-increasing function satisfying (i) $f(0) = \frac{1}{2}$ and (ii)
$f$ is continuous at zero. For the estimators \(\snipw\) and \(\snaipw\) to be well-defined under this design, it is necessary that the inclusion probabilities be bounded away from zero and one.  
If $f$ does not guarantee this property, we may enforce it by replacing $p_i$ in~\eqref{eqn:wei-design} with the clipped version  
\begin{align}
\label{eqn:truncated-p}
p_{i} = \min\big\{\max\{f(R_{i-1}), \delta\},\, 1-\delta\big\},
\end{align}  
for some fixed $\delta \in (0, \frac{1}{2}]$.  
This modification ensures $p_i \in [\delta,\, 1-\delta]$ for all $i \geq 1$. 

Intuitively, when the trial is in its early stages, the number of units in each group can differ substantially in relative terms; the design then shifts $p_i$ away from $\tfrac{1}{2}$ to favor the smaller group and reduce imbalance. As the sample size grows, any absolute difference in group sizes becomes small relative to the total number of units, causing $R_{i-1}$ to shrink and $p_i$ to converge to $\tfrac{1}{2}$. The following Lemma, which is a direct consequence of \cite[Theorem 1]{Wei1978-gd}, establishes strong stability of the truncated version of Wei's design~\eqref{eqn:truncated-p}, making Theorems~\ref{Thm:ipw},~\ref{corr:vipw_est},~\ref{Thm:aipw},~\ref{corr:vaipw_est} directly applicable.

\begin{lemma} \label{lemma:Wei_stability}
Wei's adaptive coin design~\eqref{eqn:truncated-p} is strongly stable in sense of Definition~\ref{def:design-stability}, with $\pstar = \frac{1}{2}$.
\end{lemma}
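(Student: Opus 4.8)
The plan is to reduce strong stability to the convergence of the normalized imbalance $R_k$ to zero---which is exactly the content of \cite[Theorem 1]{Wei1978-gd}---and then transfer that convergence to the inclusion probabilities by continuity. The first step is to recast the clipped rule~\eqref{eqn:truncated-p} as an adaptive coin design of the same type, namely $p_i = \tilde{f}(R_{i-1})$ with $\tilde{f} = g \circ f$, where $g(x) = \min\{\max\{x,\delta\},\,1-\delta\}$ is the clipping map. I would then check that $\tilde{f}$ inherits the three defining properties of a Wei function: since $g$ is non-decreasing and $f$ is non-increasing, the composition $\tilde{f}$ is non-increasing; since $0 < \delta \le \tfrac{1}{2} \le 1-\delta$, we get $\tilde{f}(0) = g\big(f(0)\big) = g(\tfrac{1}{2}) = \tfrac{1}{2}$; and $\tilde{f}$ is continuous at zero because $f$ is continuous at zero and $g$ is continuous everywhere. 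The only additional effect of the clipping is to force $p_i \in [\delta, 1-\delta]$, which is harmless for the convergence argument.

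With $\tilde{f}$ verified to satisfy the hypotheses of Wei's design, I would invoke \cite[Theorem 1]{Wei1978-gd} to conclude that the normalized imbalance satisfies $R_k = D_k/k \xrightarrow{p} 0$ as $k \to \infty$ (Wei in fact establishes the stronger almost-sure statement, which certainly implies convergence in probability). The final step is then a one-line continuity argument: because $p_i = \tilde{f}(R_{i-1})$ and $\tilde{f}$ is continuous at the limit point $0$, the continuous mapping theorem for convergence in probability to a constant yields $p_i = \tilde{f}(R_{i-1}) \xrightarrow{p} \tilde{f}(0) = \tfrac{1}{2}$. The first index, where $R_0$ is undefined and $p_1$ is set to $\tfrac{1}{2}$ by convention, is irrelevant to the limit. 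This is precisely the strong stability condition of Definition~\ref{def:design-stability} with $\pstar = \tfrac{1}{2}$.

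The main obstacle is the middle step: establishing that the normalized imbalance converges to zero is not a routine calculation but the substantive probabilistic fact about the adaptive coin design, and I would lean entirely on \cite[Theorem 1]{Wei1978-gd} for it. The only care required on our side is the reduction in the first step---verifying that truncation preserves monotonicity, the value $\tfrac{1}{2}$ at the origin, and continuity at the origin---so that Wei's convergence theorem applies verbatim to $\tilde{f}$ rather than only to the untruncated $f$. Once $R_k \xrightarrow{p} 0$ is in hand, the transfer to $\{p_i\}$ is an elementary application of the continuous mapping theorem.
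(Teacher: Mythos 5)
Your proposal is correct and takes essentially the same route as the paper: both rest on \cite[Theorem 1]{Wei1978-gd} together with the continuous mapping theorem to carry the convergence through the clipping, yielding $p_i \xrightarrow{p} \tfrac{1}{2}$. Your version is in fact slightly more careful, since you verify that the clipped function $\tilde{f} = g \circ f$ itself satisfies Wei's hypotheses (non-increasing, value $\tfrac{1}{2}$ at zero, continuity at zero), so that Wei's theorem applies directly to the imbalance process generated by the truncated design as actually run, whereas the paper invokes Wei's theorem for the untruncated probabilities~\eqref{eqn:wei-design} and then clips.
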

See Section~\ref{sec:Proof-of-Lemma:Wei_stability} of the Supplementary Material for the proof of Lemma~\ref{lemma:Wei_stability}. 

%In Corollary~\ref{corr:wei_application}, we show that Wei's adaptive coin design is strongly stable with $p^\star = \frac{1}{2}$, and we may apply the results of Theorems~\ref{Thm:ipw}-\ref{Thm:aipw}. 

Substituting $\pstar = \tfrac{1}{2}$ into the expressions for $\vipwstrong$ and $\vaipwstrong$ in Theorems~\ref{Thm:ipw} and~\ref{Thm:aipw} yields the limiting variances of the IPW and AIPW estimators, $\snipw$ and $\snaipw$, respectively, under Wei’s design:
\begin{subequations}
\begin{align}
\vipwwei &= \moment_{0}+\moment_{1} + 2\crossmoment, 
\label{eqn:vipwwei} \\ 
\vaipwwei &= \Yvar_{0}+\Yvar_{1}+2\Ycov.
\label{eqn:vaipwwei}
\end{align}
\end{subequations}
Since $\pstar$ is known, it can be directly plugged into the variance estimators $\widehat{\vipwstronghat}$ in~\eqref{eqn:vipw_liberal} and $\widehat{\vaipwstronghat}$ in~\eqref{eqn:vaipw_liberal}, yielding the following conservative estimators for the IPW and AIPW variances:
\begin{align}\label{eqn:wei_var_est}
\widehat{\vipwwei} = (\widehat{m}_{0} + \widehat{m}_{1})^2, \qquad 
\widehat{\vaipwwei} = (\widehat{\sigma}_{0} + \widehat{\sigma}_{1})^2,
\end{align}
where $\widehat{m}^2_{\ell}$ and $\widehat{\sigma}^2_{\ell}$, $\ell \in \{0,1\}$, are as defined in~\eqref{eqn:mhats} and~\eqref{eqn:phat-aipw}, respectively. We can now use these variance estimators to construct conservative confidence intervals for $\ate$. 
Recall that the interval based on $\widehat{\vipwwei}$ attains exact asymptotic coverage when the potential outcomes satisfy additivity on the log scale~\eqref{eqn:log-additive-model}, whereas the interval based on $\widehat{\vaipwwei}$ attains exact asymptotic coverage under generalized treatment effect additivity as defined in~\eqref{eqn:gen-additive-model}.

Next, we evaluate the performances of the IPW and AIPW estimators under Wei’s adaptive coin design through simulation studies. 
We consider three types of potential outcome matrices: 
(a) A general, non-additive setting, in which the potential outcomes 
\((Y_i(0), Y_i(1))\) are drawn from a bivariate normal distribution with mean vector \((0,1)^\T\), variance vector \((1,1)^\T\) and correlation 0.3,
%and variance–covariance matrix 
%\begin{align*}
%\begin{bmatrix}
%1 & 0.3 \\
%0.3 & 1
%\end{bmatrix},
%\end{align*}
with support restricted to \([-3,3]\) to ensure bounded outcomes, 
(b) an additive setting satisfying \eqref{eqn:additive-model}, where the control potential outcomes are drawn from a normal distribution with mean~0 and variance~1, truncated to \([-3,3]\), and the treatment outcomes are defined by \(Y_i(1) = Y_i(0) + \tau\) with \(\tau = 10\); and 
(c) a log-additive setting satisfying \eqref{eqn:log-additive-model}, in which the control potential outcomes are drawn from a normal distribution with mean~10 and variance~1, truncated to \([7,13]\), and the treatment outcomes are defined by \(Y_i(1) = c\,Y_i(0)\) with \(c = 2\). Potential outcomes generated once are held fixed.
 
Treatment assignments \(\mathbf{K}\) are generated according to Wei’s sequential randomization scheme, with assignment probabilities 
\(p_i = f(R_{i-1}) = \left(1 - R_{i-1}\right)/2 = 1 - m_{i-1}/(i-1)\), 
where \(m_{i-1}\) denotes the number of units assigned to the treatment among the first $(i-1)$ subjects, and the truncation parameter is set to \(\delta = 0.01\). 
Each simulation involves \(N = 5000\) units and is repeated \( M= 20000\) times. 
For each replication, confidence intervals are constructed as proposed, and empirical coverage is evaluated across 20 nominal levels ranging from 0.75 to 0.99.

\begin{figure}[!ht]
\centering
  % --- Non-Additive: IPW ---
  \begin{subfigure}{0.32\textwidth}
    \centering
    \includegraphics[width=\linewidth]{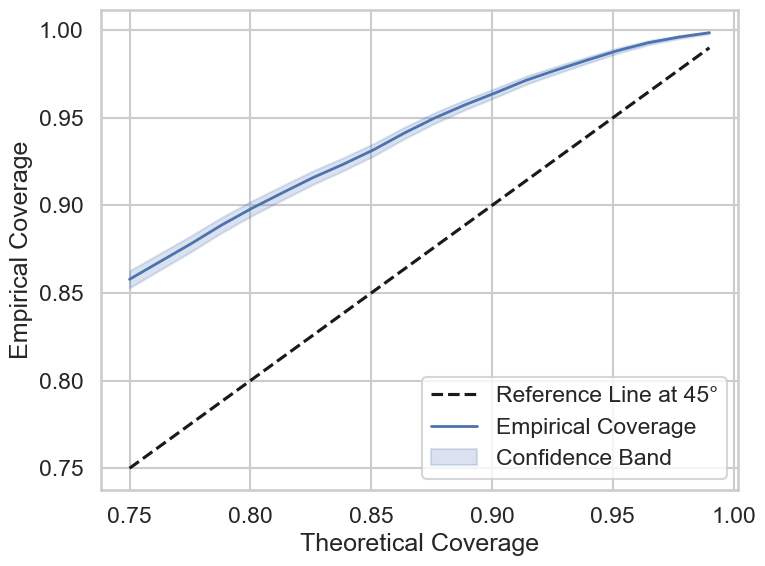}
    \caption{IPW estimator: Non-Additive}
    \label{fig:subfig1iia}
  \end{subfigure}%
  % --- Additive: IPW ---
  \begin{subfigure}{0.32\textwidth}
    \centering
    \includegraphics[width=\linewidth]{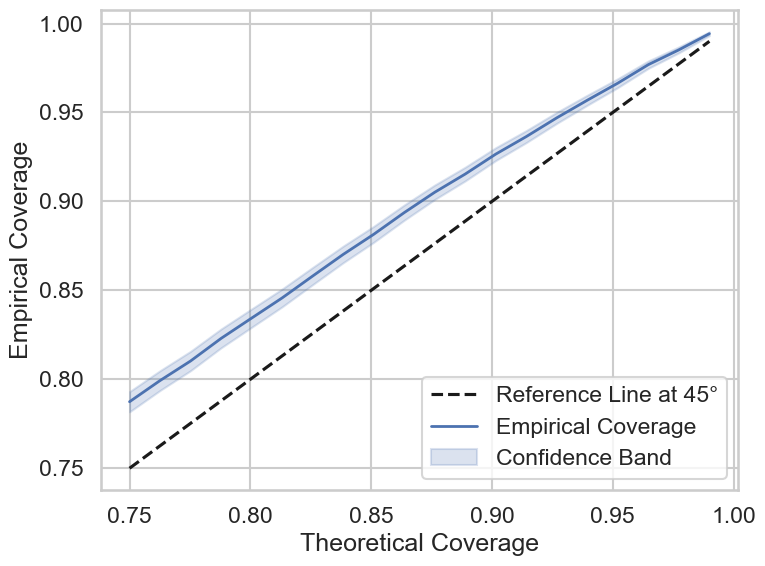}
    \caption{IPW estimator: Additive}
    \label{fig:subfig1ia}
  \end{subfigure}%
  % --- Log-Additive: IPW ---
  \begin{subfigure}{0.32\textwidth}
    \centering
    \includegraphics[width=\linewidth]{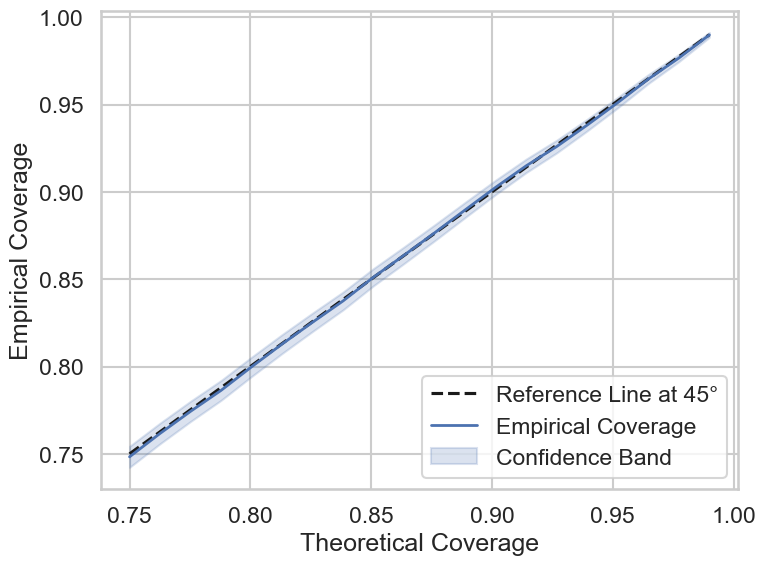}
    \caption{IPW estimator: Log-Additive}
    \label{fig:subfig1iva}
  \end{subfigure} \\[1ex]
   % --- Non-Additive: AIPW ---
  \begin{subfigure}{0.32\textwidth}
    \centering
    \includegraphics[width=\linewidth]{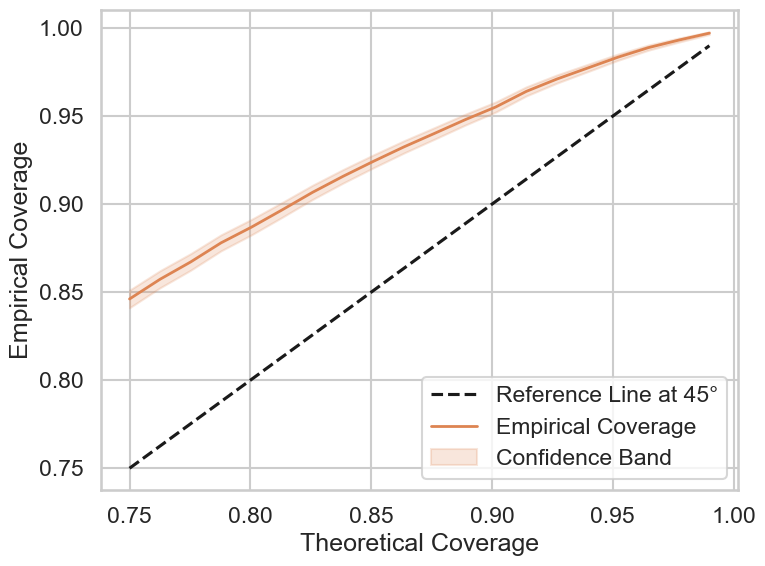}
    \caption{AIPW estimator: Non-Additive}
    \label{fig:subfig1iib}
  \end{subfigure} %
    % --- Additive: AIPW ---
  \begin{subfigure}{0.32\textwidth}
    \centering
    \includegraphics[width=\linewidth]{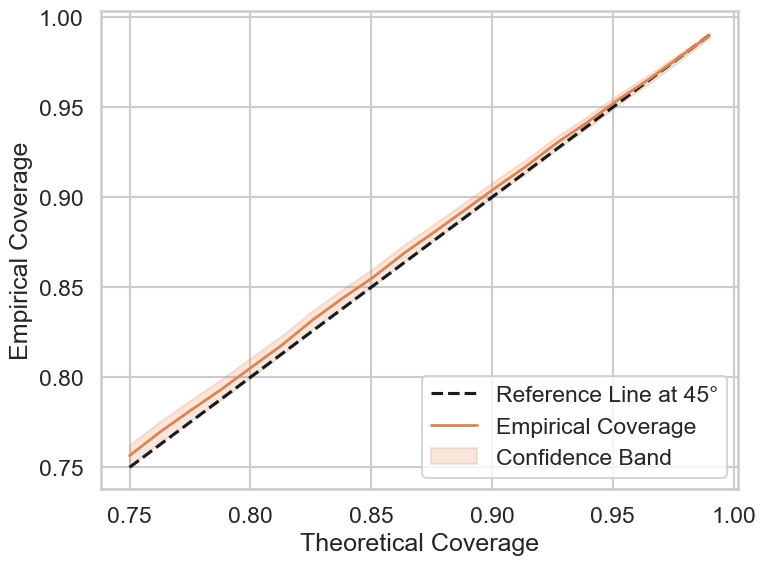}
    \caption{AIPW estimator: Additive}
    \label{fig:subfig1ib}
  \end{subfigure} %
   % --- Log-Additive: AIPW ---
  \begin{subfigure}{0.32\textwidth}
    \centering
    \includegraphics[width=\linewidth]{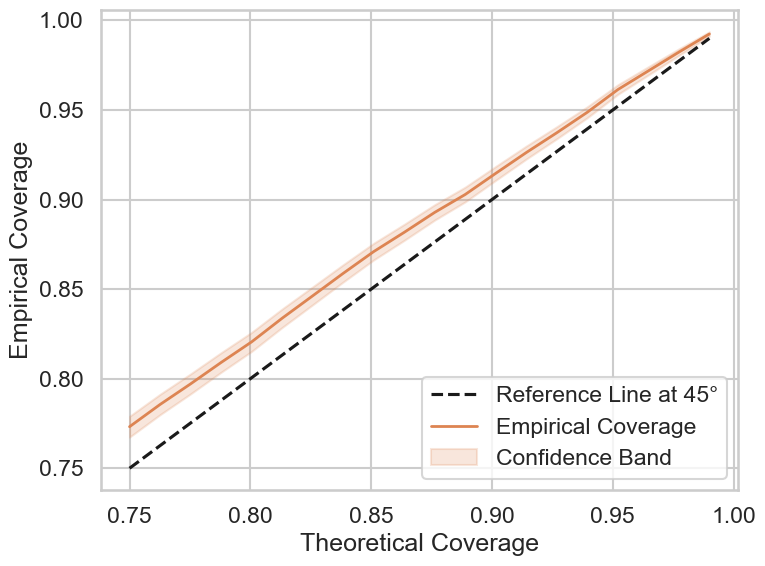}
    \caption{AIPW estimator: Log-Additive}
    \label{fig:subfig1ivb}
  \end{subfigure} %
  % --- Main caption ---
  \caption{Comparison of the theoretical and empirical coverages for Wei's design.}
  \label{fig:weiplots}
\end{figure}

Figure~\ref{fig:weiplots} reports the empirical coverage of confidence intervals based on the IPW and AIPW estimators. 
In both cases, the intervals exhibit reliable coverage of the true $\ate$. 
For the non-additive setup, both intervals remain conservative, whereas under additivity and log-additivity, the empirical coverage approaches the nominal levels. 
In particular, the AIPW estimator performs better in the non-additive setting, yielding coverage closer to the nominal levels than the IPW estimator. 
The IPW estimator attains nearly exact coverage under the log-additive setup, while the AIPW estimator achieves nearly exact coverage under additivity and remains close to nominal levels under log-additivity. 
These results align with the theoretical guarantees of variance estimator consistency established in Theorems~\ref{corr:vipw_est} and~\ref{corr:vaipw_est}, and overall demonstrate the superior stability of the AIPW estimator.

Figure~\ref{fig:weiciplots} displays the average confidence interval lengths for the true parameter $\ate$ under the IPW and AIPW estimators. 
Each interval length is computed as \(2 \, z_{1-\alpha/2} \, \sqrt{\widehat{V}/N},\) 
where \(z_{1-\alpha/2}\) is the standard normal quantile corresponding to the nominal level \(\alpha\), and \(\widehat{V}\) denotes the estimated variance. 
For the IPW and AIPW estimators, this corresponds to \(\widehat{\vipwwei}\) and \(\widehat{\vaipwwei}\), respectively, as defined in equation~\eqref{eqn:wei_var_est}. 
Across all confidence levels and data-generating mechanisms, the AIPW estimator produces substantially shorter intervals than the IPW estimator. Together with the coverage results established in Figure~\ref{fig:weiplots}, these results highlight the overall greater efficiency and stability of the AIPW estimator. Specifically, while the IPW estimator achieves valid coverage under log-additive setup, the AIPW estimator performs remarkably better in both additive and non-additive setups, providing coverage levels closer to the nominal values along with consistently shorter confidence intervals. In general, these results emphasize that although both estimators attain reliable coverage, the AIPW estimator achieves this with noticeably tighter intervals, making it generally more efficient and preferable in practical applications. 

\begin{figure}[ht]
\centering
  % --- Non-Additive: IPW ---
  \begin{subfigure}{0.33\textwidth}
    \centering
    \includegraphics[width=\linewidth]{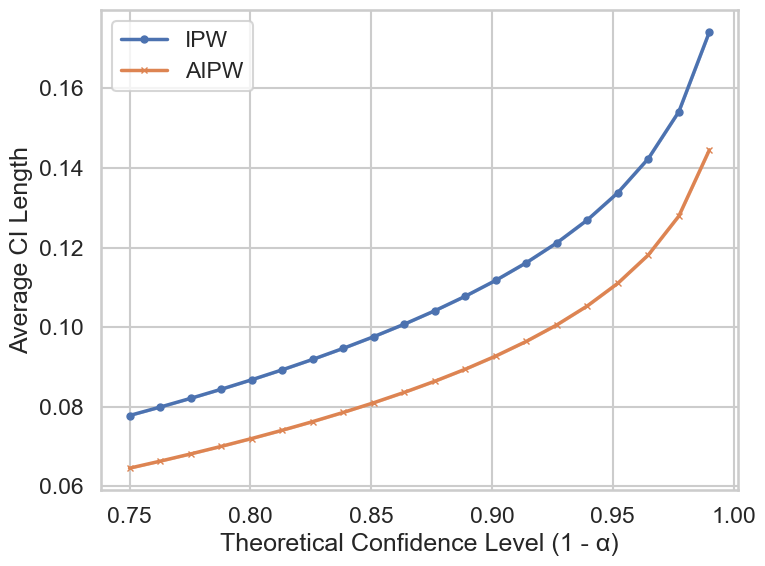}
    \caption{Non-Additive case}
    \label{fig:subfig1iiia}
  \end{subfigure}%
  \hfill
  % --- Non-Additive: AIPW ---
  \begin{subfigure}{0.33\textwidth}
    \centering
    \includegraphics[width=\linewidth]{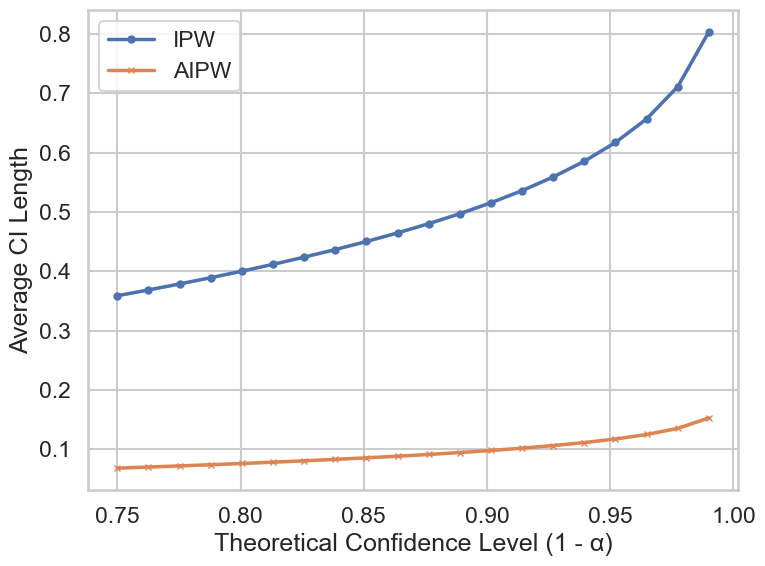}
    \caption{Additive case}
    \label{fig:subfig1iiib}
  \end{subfigure}%
  \hfill
  % --- Log-Additive: IPW ---
  \begin{subfigure}{0.33\textwidth}
    \centering
    \includegraphics[width=\linewidth]{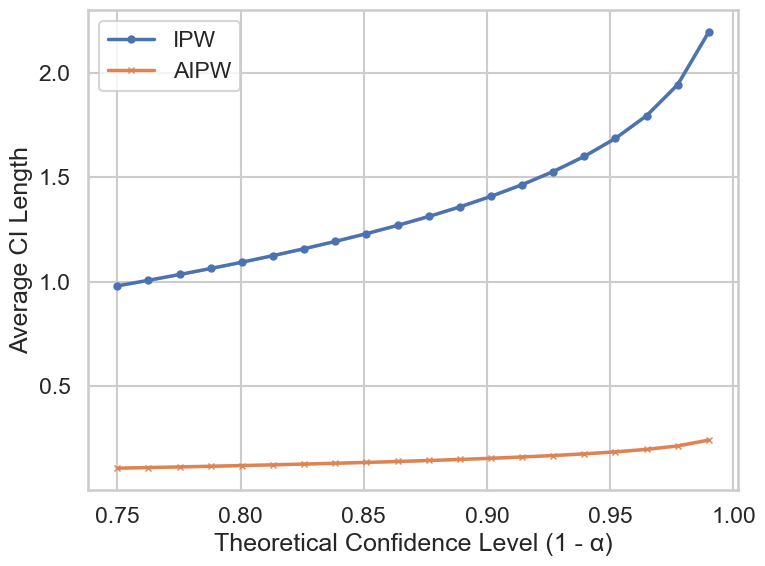}
    \caption{Log-Additive case}
    \label{fig:subfig1va}
  \end{subfigure}%
  % --- Main caption ---
  \caption{Comparison of the average lengths of confidence intervals for Wei's design.}
  \label{fig:weiciplots}
\end{figure} 

Table~\ref{tab:bias_vr_wei} reports additional diagnostic metrics across sample sizes. Specifically, we report the Monte Carlo bias, computed as the difference between the average estimator across $M=2000$ simulation replications and the true treatment effect, and the variance ratio, defined as the ratio of the empirical Monte Carlo variance to the average estimated variance across replications. The variance ratio evaluates the calibration of the proposed variance estimators, with values close to one indicating accurate variance estimation.

While the coverage and confidence interval length plots are based on $M=20000$ Monte Carlo replications to ensure stable estimation of tail probabilities, the bias and variance ratio diagnostics in Table~\ref{tab:bias_vr_wei} are computed using $M=2000$ replications, which are sufficient for accurate estimation of first- and second-moment quantities. While coverage and confidence interval length results for $N=5000$ are presented graphically, Table~\ref{tab:bias_vr_wei} summarizes Monte Carlo bias and variance ratio diagnostics for $N=500$ and $N=10000$.

Several patterns emerge. First, the Monte Carlo bias is negligible across sample sizes, confirming the unbiasedness properties established theoretically. Second, the variance ratios exhibit mild finite sample deviations at $N=500$, particularly under the log-additive design, but generally move closer to one as $N$ increases to $10000$. This behavior is consistent with the theoretical results and illustrates convergence towards the theoretical large sample regime.

\begin{table}[H]
\centering
\caption{Finite sample bias and variance ratio for $N=500$ and $N=10000$ under Wei's design.}
\label{tab:bias_vr_wei}
\small
\setlength{\tabcolsep}{4pt}
\begin{tabular}{ll
                S[table-format=+1.5] S[table-format=+1.5]
                S[table-format=2.4]  S[table-format=2.4]}
\toprule
 & 
& \multicolumn{2}{c}{Bias}
& \multicolumn{2}{c}{Variance Ratio} \\
\cmidrule(lr){3-4}\cmidrule(lr){5-6}
Model & Estimator
& {N=500} & {N=10000}
& {N=500} & {N=10000} \\
\midrule

%\multicolumn{6}{l}{\textbf{Additive}}\\
%\addlinespace[2pt]
Additive &  IPW  & -0.01385  & 0.00211   & 0.9317 & 0.8600 \\
 & AIPW &  0.005922 & 0.0005493 & 1.1050 & 0.9046 \\
\addlinespace[4pt]

%\multicolumn{6}{l}{\textbf{Non-additive}}\\
%\addlinespace[2pt]
Non-additive & IPW  &  0.001238 & 0.001204  & 0.9379 & 0.8553 \\
& AIPW &  0.002993 & 0.001014  & 1.2190 & 0.9885 \\

\addlinespace[4pt]

%\multicolumn{6}{l}{\textbf{Log-additive}}\\
%\addlinespace[2pt]
Log-additive & IPW  & -0.04029  & 0.005665  & 1.0670 & 1.0140 \\
\quad & AIPW &  0.005526 & 0.0005591 & 0.9561 & 1.0090 \\

\bottomrule
\end{tabular}
\end{table}

%\newpage
\subsection{Efron's Biased Coin Design}\label{sec:efron}
Next, we consider the biased coin design introduced by~\cite{Efron1971-yh} which enforces another form of approximate balance between the number of allocations in the treatment and control groups. As in the previous section, let \(D_k = m_k - n_k = 2m_k - k\) denote the imbalance between the treatment and control groups after the assignment of the \(k\)th unit, 
where \(m_k\) and \(n_k\) denote the numbers of treatment and control assignments, respectively. Under Efron's biased coin design, the $i$th unit is assigned to treatment with probability:
\begin{align}
\label{eqn:Efron-design}
p_{i} =
\begin{cases}
\eta & \text{if } D_{i-1} < 0\\
\frac{1}{2} & \text{if } D_{i-1} = 0 \\
1 - \eta & \text{if } D_{i-1} > 0
\end{cases},
\end{align} 
where $\eta \in \big[\frac{1}{2} , 1)$ controls the strength of the bias toward balance. In words, a larger value of $\eta$ forces faster correction of imbalance. It is worth noting that $p_i$ takes the values $\eta$ and $1 - \eta$ infinitely often, and thus Efron’s biased coin design is not strongly stable in the sense of Definition~\ref{def:design-stability}. However, the following lemma establishes weak stability of the design, making Theorems~\ref{Thm:ipw}, \ref{corr:vipw_est_weak}, \ref{Thm:aipw}, and~\ref{corr:vaipw_est_weak} applicable.
%\vspace{-33pt}
\begin{lemma} \label{lemma:efron_as_half}
Efron's biased coin design~\eqref{eqn:Efron-design} is weakly stable in sense of Definition~\ref{def:weak-design-stability}, with $\ponestar = \frac{4\eta^2(1-\eta)}{1-4\eta+12\eta^2-8\eta^3}$ and $\ptwostar = \frac{1-4\eta+8\eta^2-4\eta^3}{1-4\eta+12\eta^2-8\eta^3}$.  Moreover,
%\vspace{-15pt}
%\vspace{5pt}
$N^{-1} \sum_{i=1}^N p_i \stackrel{\mathbb{P}}{\rightarrow} \frac{1}{2}$.
\end{lemma}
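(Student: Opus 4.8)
The plan is to exploit the fact that Efron's rule~\eqref{eqn:Efron-design} depends on the history only through the current imbalance, so that the design is driven by an ergodic Markov chain whose stationary distribution determines all three limits. First I would observe that the signed imbalance process $\{D_k\}_{k\ge 0}$, with $D_0 = 0$ and $D_k = D_{k-1} + (2K_k - 1)$, is a time-homogeneous Markov chain on $\mathbb{Z}$: given $\sigmafield$, the conditional law of $K_i$ depends only on the sign of $D_{i-1}$, so the one-step transitions send $d \mapsto d\pm 1$ with probabilities determined only by whether $d$ is negative, zero, or positive. For $\eta > \tfrac12$ this is a nearest-neighbor (hence reversible) walk with a net drift toward the origin, so it is irreducible and positive recurrent.

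Next I would compute the stationary distribution $\pi$ by detailed balance. With $r := (1-\eta)/\eta \in (0,1)$, the balance equations give $\pi(d+1) = r\,\pi(d)$ for $d \ge 1$, the boundary relation $\tfrac12\,\pi(0) = \eta\,\pi(1)$, and the sign symmetry $\pi(-d) = \pi(d)$. Summing the geometric tails and normalizing yields $\pi(0) = (2\eta-1)/(2\eta)$ and therefore $\Prob_\pi(D > 0) = \Prob_\pi(D < 0) = \tfrac12\bigl(1 - \pi(0)\bigr) = 1/(4\eta)$; these quantities, together with $\pi(0)$, are all that the stated formulas require.

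I would then note that each target average is a bounded function of the chain evaluated along its path. Since $p_i$ is a deterministic function of $D_{i-1}$ taking values in $\{1-\eta,\tfrac12,\eta\}$, we may write $\tfrac1N\sum_{i=1}^N 1/p_i = \tfrac1N\sum_{k=0}^{N-1} h(D_k)$ for a bounded $h$, and likewise for $1/(1-p_i)$ and $p_i$. The ergodic theorem (the strong law for positive recurrent Markov chains, for which periodicity of the chain is irrelevant since only time averages are involved) then gives almost sure, hence in-probability, convergence to the stationary expectations. Evaluating these, $\E_\pi[1/p] = 2\pi(0) + \tfrac{1}{4\eta}\bigl(\tfrac1\eta + \tfrac{1}{1-\eta}\bigr)$ simplifies over the common denominator $4\eta^2(1-\eta)$ to $(1 - 4\eta + 12\eta^2 - 8\eta^3)/(4\eta^2(1-\eta))$, and inverting gives the claimed $\ponestar$. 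By the sign symmetry of $\pi$ the roles of $\eta$ and $1-\eta$ swap between the $\{D<0\}$ and $\{D>0\}$ strata, so $\E_\pi[1/(1-p)] = \E_\pi[1/p]$; setting $1/(1-\ptwostar)$ equal to this common value and solving produces the claimed $\ptwostar$. Finally $\E_\pi[p] = \tfrac12\pi(0) + \tfrac{1}{4\eta}\bigl(\eta + (1-\eta)\bigr) = \tfrac{2\eta-1}{4\eta} + \tfrac{1}{4\eta} = \tfrac12$, which is the last assertion.

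The main obstacle is the second step: establishing positive recurrence and pinning down $\pi(0)$ through normalization of the two-sided geometric tails; everything downstream is a routine computation once $\pi(0)$ and the symmetry are known. A minor point requiring separate attention is the degenerate case $\eta = \tfrac12$, where the chain is only null recurrent but the design collapses to $p_i \equiv \tfrac12$, so that all three averages are deterministic and the formulas reduce correctly to $\ponestar = \ptwostar = \tfrac12$.
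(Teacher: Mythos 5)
Your proposal is correct and follows essentially the same route as the paper: cast the imbalance process $\{D_k\}_{k\geq 1}$ as an irreducible Markov chain on $\mathbb{Z}$, establish positive recurrence, compute the stationary distribution (with $\pi(0) = \tfrac{2\eta-1}{2\eta}$ and symmetric geometric tails), and apply the ergodic theorem to identify each sample average with the corresponding stationary expectation, yielding the stated $\ponestar$, $\ptwostar$, and the limit $\tfrac12$. The only differences are minor: you derive $\pi$ via detailed balance (legitimate, since a nearest-neighbor chain on $\mathbb{Z}$ is reversible) where the paper solves the global balance equations and invokes Foster's theorem with Lyapunov function $V(s)=|s|$ for positive recurrence, and you separately handle the boundary case $\eta=\tfrac12$ (where the chain is only null recurrent but the design degenerates to $p_i \equiv \tfrac12$), a case the paper's argument, which needs $\varepsilon = 2\eta-1>0$, silently omits.
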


\begin{remark}
The proof of Lemma~\ref{lemma:efron_as_half} proceeds by studying the treatment–control imbalance sequence $\{D_k\}_{k \geq 1}$. We first establish that $\{D_k\}_{k \geq 1}$ forms an irreducible and positively recurrent Markov chain by applying Foster’s Theorem~\cite{Foster1953-av}. The resulting positive recurrence and irreducibility ensure the existence of a unique stationary distribution, which, together with the mean ergodic theorem, facilitates characterization of the limiting behavior of long-run averages of functions of the assignment probabilities. Details of the proof are provided in Section~\ref{sec:Proof-of-Lemma:efron_as_half} of the Supplementary Material.
\end{remark}

%\vspace{-10pt}
Substituting the values of $\ponestar$ and $\ptwostar$ from Lemma~\ref{lemma:efron_as_half} into Theorems~\ref{Thm:ipw} and~\ref{Thm:aipw} yields the limiting variances of the IPW and AIPW estimators, $\snipw$ and $\snaipw$, under Efron’s design:
\begin{subequations}
\begin{align}
\vipwefron &= \left(\moment_{0}+\moment_{1}\right)\frac{1-4\eta+8\eta^2-4\eta^3}{4\eta^2(1-\eta)} + 2\crossmoment,
\label{eqn:vipwefron} \\
\vaipwefron &= \left(\Yvar_{0}+\Yvar_{1}\right)\frac{1-4\eta+8\eta^2-4\eta^3}{4\eta^2(1-\eta)} + 2\Ycov.
\label{eqn:vaipwefron}
\end{align}
\end{subequations}

Furthermore, as shown in Lemma~\ref{lemma:efron_as_half}, under this design $N^{-1}\sum_{i=1}^N p_i \stackrel{\mathbb{P}}{\rightarrow} \tfrac{1}{2}$, thereby satisfying the sufficient condition in~\eqref{eqn:extra_assumption}. This allows for consistent estimation of $\moment_0$, $\moment_1$, $\Yvar_0$, and $\Yvar_1$. Substituting these into~\eqref{eqn:varipw_est_weak} and~\eqref{eqn:varaipw_est_weak} yields conservative estimators of $\vipwefron$~\eqref{eqn:vipwefron} and $\vaipwefron$~\eqref{eqn:vaipwefron}:

%We show that the Efron's design satisfies
%\begin{align*}
%\frac{1}{N}\sum_{i=1}^N p_i \stackrel{\mathbb{P}}{\rightarrow} \frac{1}{2},
%\end{align*}
%which aligns with the sufficent condition~\eqref{eqn:extra_assumption}.
%This allows us to obtain estimate $\widehat{m}_0^2, \widehat{m}_1^2, \widehat{\sigma}_0^2$ and $\widehat{\sigma}_1^2$ respectively.
%Hence, the estimators for $\vipwefron$ and $\vaipwefron$, as defined in \eqref{eqn:vipwefron} and \eqref{eqn:vaipwefron}, are given by:

\begin{subequations}
\begin{align}
\widehat{\vipwefron} &= \left(\widehat{m}^2_{0}+\widehat{m}^2_{1}\right)\frac{1-4\eta+8\eta^2-4\eta^3}{4\eta^2(1-\eta)} + 2\widehat{m}_{0}\widehat{m}_{1}, \label{eqn:vipwefron_hat} \\
\widehat{\vaipwefron} &= \left(\widehat{\sigma}^2_{0}+\widehat{\sigma}^2_{1}\right)\frac{1-4\eta+8\eta^2-4\eta^3}{4\eta^2(1-\eta)} + 2\widehat{\sigma}_{0}\widehat{\sigma}_{1}, 
\label{eqn:vaipwefron_hat}
\end{align}
\end{subequations} As in Section~\ref{sec:wei}, the variance estimators $\widehat{\vipwefron}$ and $\widehat{\vaipwefron}$ can be used to construct conservative confidence intervals for $\ate$. 
The interval based on $\widehat{\vipwefron}$ achieves exact asymptotic coverage when the potential outcomes satisfy additivity on the log scale~\eqref{eqn:log-additive-model}, 
while the interval based on $\widehat{\vaipwefron}$ attains exact coverage under generalized treatment effect additivity, as defined in~\eqref{eqn:gen-additive-model}.

A complete assessment of the coverage of the confidence intervals constructed using the IPW and AIPW estimators under Efron's design is provided in Section~\ref{sec:simulation_for_efron} of the Supplementary Material. The simulations use the same data-generating procedures and parameter settings as in Section~\ref{sec:wei}, 
except that treatment assignments now follow Efron’s biased coin design~\eqref{eqn:Efron-design} rather than Wei’s design. The biased-coin parameter is fixed at $\eta = 0.7$ in all simulations.

%%%%%%%%%%%%%%%%%%%%%%%%%%%%%%%%%%%%%%%%%%%%%%%%%%%%%%%%%%%%%%%%%
%%%%%%%%%%%%%%%%% COMMENTED OUT: MOVE TO SUPPLEMENTARY MATERIAL
%%%%%%%%%%%%%%%%%%%%%%%%%%%%%%%%%%%%%%%%%%%%%%%%%%%%%%%%%%%%%%%%%

%%%%%%%%%%%%%%%%%%%%%%%%%%%%%%%%%%%%%%%%%%%%%%%%%%%%%%%%%%%%
%%%%%%%%%%%%%%%% END COMMENT %%%%%%%%%%%%%%%%%%%%%%%%%%%%%%%
%%%%%%%%%%%%%%%%%%%%%%%%%%%%%%%%%%%%%%%%%%%%%%%%%%%%%%%%%%%%

\begin{remark}
Both set of simulations conducted in Sections \ref{sec:wei} and \ref{sec:efron} reveal a common interesting pattern: when the nominal confidence level is high, the variance estimator appears less conservative, with empirical coverage closer to nominal. This phenomenon can be explained from the asymptotic theory. See Section~\ref{sec:theoretical_explanation_of_observed_coverage_patterns} of the Supplementary Material for theoretical justification.
\end{remark}

%%% Move to Supplementary material

%%%%%%%%%%%%%%%%%%%%%%%%%%%%%%%%

%In other words, both the IPW and AIPW estimators achieve valid coverage: the CIs for the IPW estimator are conservative in both additive and non-additive cases, whereas for the AIPW estimator it is conservative in the non-additive setup and nearly exact in the additive setup. In terms of the interval length, the AIPW estimator continues to produce much shorter CIs than the IPW estimator, highlighting its efficiency gain.

% \section{Simulation Studies}
% \label{sec:simulation}

\subsection{Possible extensions to more general designs in adaptive clinical trial literature and assessment of stability} \label{sec:clinical_trial}

As discussed in \cite{Adaptive_clinical1} and \cite{Adaptive_clinical2}, Wei's and Efron's designs can be considered as special cases of more general classes of adaptive designs used in clinical trial literature, in which the assignment probability $p_i$ is a known function of the form $g \left(m_{i-1}/(i-1), \widehat{p}^\star_{i-1} \right)$, where as earlier, $m_k$ denotes the number of treatment units among the first $k$ subjects, and $\widehat{p}_{i-1}^\star$ is an estimator of the unknown target proportion $p^\star$ after stage $(i-1)$. For designs in which $g(x,y)$ only depends on $x$ (e.g., Wei's design with $g(x,y) = 1-x$), it is always possible to simulate sequences of assignment probabilities $p_i$ and plot them against step $i$ to see whether stability is attained (refer to Section~\ref{sec:practical_assessment_of_stability} of the Supplementary Material for additional details). For designs where the target proportion $p^\star$ is unknown, but a simple estimator of the form~\eqref{eqn:phat} is used, such a simulation-based assessment is also possible. However, when the true target proportion is a known function of the unknown potential outcomes (e.g., in Neymanian allocation as in~\cite{Ravi2024}) and the estimated proportion is a function of the observed outcomes, imposing additional conditions on the potential outcomes may be necessary to conduct such simulation studies. This requires further investigation and should be an interesting area for further exploration.

\section{Discussion}
\label{sec:discussion}

We have developed a general theoretical framework for conducting inference on average treatment effects in settings where treatment assignment is sequentially adaptive within a finite-population. This framework unifies and extends existing results by accommodating a broad class of adaptive randomization schemes, where assignment probabilities may evolve over time based on past outcomes. Within this setup, we establish central limit theorems for both inverse probability weighted (IPW) and augmented IPW (AIPW) estimators under strong and weak design stability conditions. Although the limiting distributions feature explicit expressions for the asymptotic variances, the fundamental problem of causal inference - not being able to observe the two potential outcomes for each unit - leads to challenges in their estimation. We propose conservative variance estimators that are consistent under different forms of treatment effect homogeneity.

To demonstrate the applicability of our framework, we analyze Wei’s adaptive coin design and Efron’s biased coin design, two classical examples in sequential experimentation. These applications reveal how the general theory accommodates designs that deviate from strong stability (e.g., Efron's design), thereby illustrating its flexibility and robustness.

From a practical standpoint, our findings provide reassurance that adaptive treatment assignment mechanisms—increasingly popular in modern experimental and clinical trial settings—can be used within a finite population framework without imposing any model on the potential outcome. The research opens up several new research possibilities. Extending the framework to covariate-adaptive designs where assignments depend explicitly on pre-measured covariates \cite[e.g.,][]{FSM1979, morgan2012}, would broaden the applicability of the theory. Adaptive treatment assignment mechanisms also provide a natural solution to finding optimal designs in a finite-population setting, e.g., \cite{Ravi2024} and the results presented in this paper can provide an inferential framework for such adaptive designs.

%We have developed a general framework for inference on average treatment effects under sequentially adaptive treatment assignment in finite populations. Our results show that both IPW and AIPW estimators admit central limit theorems under design stability, with explicit asymptotic variances. The augmented estimator gains efficiency through outcome modeling, and our conservative variance estimators yield valid confidence intervals even when the covariance between potential outcomes is unknown. Applications to Wei’s and Efron’s designs illustrate the scope of the theory, including cases where strict stability fails.

%From a practical standpoint, our findings demonstrate that adaptive assignment mechanisms can be used without sacrificing the validity of IPW/AIPW estimators, provided variance is estimated carefully. Ensuring positivity of assignment probabilities is particularly important in finite populations. Looking ahead, extending the framework to covariate-adaptive designs and analyzing finite-sample performance under strong dependence remain compelling directions for future work.

\section*{Acknowledgement}
We thank the two reviewers and the Associate Editor for insightful and helpful comments. The research of Tirthankar Dasgupta was partially funded by National Science Foundation Grant SES 2217522. The research of Koulik Khamaru was partially funded by National Science Foundation Grant DMS-2311304.

\printbibliography

@article{Neyman1923,
  author  = {Neyman, Jerzy},
  title   = {On the application of probability theory to agricultural experiments. Essay on principles},
  journal = {Statistical Science},
  year    = {1923},
  volume  = {5},
  number  = {4},
  pages   = {465--480},
  note    = {Reprinted from Roczniki Nauk Rolniczych, 1923}
}

@ARTICLE{Rubin1974-gr,
  title     = "Estimating causal effects of treatments in randomized and
               nonrandomized studies",
  author    = "Rubin, Donald B",
  journal   = "J. Educ. Psychol.",
  publisher = "American Psychological Association (APA)",
  volume    =  66,
  number    =  5,
  pages     = "688--701",
  month     =  oct,
  year      =  1974,
  language  = "en"
}

@ARTICLE{Efron1971-yh,
  title     = "Forcing a sequential experiment to be balanced",
  author    = "Efron, Bradley",
  abstract  = "Subjects arrive sequentially at an experimental site and must be
               assigned immediately to treatment or control groups. In order to
               avoid biasing the results of the experiment it is customary to
               make the assignments by independent flips of a fair coin, but in
               small-sized experiments this may result in a severe imbalance
               between the numbers of treatments and controls. This paper
               discusses a new method of assigning the subjects which tends to
               balance the experiment, but at the same time is not over
               vulnerable to various common forms of experimental bias.",
  journal   = "Biometrika",
  publisher = "Oxford University Press (OUP)",
  volume    =  58,
  number    =  3,
  pages     = "403--417",
  year      =  1971
}

@ARTICLE{Wei1978-gd,
  title     = "The adaptive biased coin design for sequential experiments",
  author    = "Wei, L J",
  abstract  = "In comparing two treatments, eligible subjects come to the
               experiment sequentially and must be treated at once. To reduce
               experimental bias and to increase the precision of inference
               about treatment effects, the adaptive biased coin design, which
               offers a compromise between perfect balance and complete
               randomization, is proposed and analyzed. This new design has the
               property that it forces a small-sized experiment to be balanced,
               but tends toward the complete randomization scheme as the size
               of the experiment increases.",
  journal   = "Ann. Stat.",
  publisher = "Institute of Mathematical Statistics",
  volume    =  6,
  number    =  1,
  pages     = "92--100",
  month     =  jan,
  year      =  1978
}

@INCOLLECTION{Hall1980-ny,
  title     = "The Central Limit Theorem",
  booktitle = "Martingale Limit Theory and its Application",
  author    = "Hall, P and Heyde, C C",
  publisher = "Elsevier",
  pages     = "51--96",
  year      =  1980
}

@ARTICLE{Foster1953-av,
  title     = "On the stochastic matrices associated with certain queuing
               processes",
  author    = "Foster, F G",
  journal   = "Ann. Math. Stat.",
  publisher = "Institute of Mathematical Statistics",
  volume    =  24,
  number    =  3,
  pages     = "355--360",
  month     =  sep,
  year      =  1953
}

@article{li2017general,
  title={General forms of finite population central limit theorems with applications to causal inference},
  author={Li, Xinran and Ding, Peng},
  journal={Journal of the American Statistical Association},
  volume={112},
  number={520},
  pages={1759--1769},
  year={2017},
  publisher={Taylor \& Francis}
}

@book{lehmann1999,
  title={Elements of Large-Sample Theory},
  author={Lehmann, Erich L},
  year={1999},
  publisher={Springer}
}

@book{vandervaart2000,
  title={Asymptotic Statistics},
  author={{van der Vaart}, Aad W},
  year={2000},
  publisher={Cambridge University Press}
}

@book{cochran1977,
  title={Sampling Techniques},
  author={Cochran, William G},
  year={1977},
  edition={3rd},
  publisher={Wiley}
}

@book{fisher1935,
  title={The Design of Experiments},
  author={Fisher, Ronald A},
  year={1935},
  publisher={Oliver \& Boyd}
}

@book{rosenbaum2002,
  title={Observational Studies},
  author={Rosenbaum, Paul R},
  year={2002},
  edition={2nd},
  publisher={Springer}
}

@book{imbens2015,
  title={Causal Inference for Statistics, Social, and Biomedical Sciences},
  author={Imbens, Guido W and Rubin, Donald B},
  year={2015},
  publisher={Cambridge University Press}
}

@article{madow1948,
  title={On the Limiting Distributions of Estimates Based on Samples from Finite Universes},
  author={Madow, William G},
  journal={Annals of Mathematical Statistics},
  volume={19},
  number={4},
  pages={535--545},
  year={1948}
}

@article{erdos1959,
  title={On the Central Limit Theorem for Samples from a Finite Population},
  author={Erd{\H{o}}s, Paul and R{\'e}nyi, Alfr{\'e}d},
  journal={Publication of the Mathematical Institute of the Hungarian Academy of Sciences},
  volume={4},
  pages={49--61},
  year={1959}
}

@article{hajek1960,
  title={Limiting Distributions in Simple Random Sampling from a Finite Population},
  author={H{\'a}jek, Jaroslav},
  journal={Publications of the Mathematical Institute of the Hungarian Academy of Sciences},
  volume={5},
  pages={361--374},
  year={1960}
}

@article{hajek1961,
  title={Some Extensions of the Wald–Wolfowitz–Noether Theorem},
  author={H{\'a}jek, Jaroslav},
  journal={Annals of Mathematical Statistics},
  volume={32},
  number={2},
  pages={506--523},
  year={1961}
}

@article{wald1944,
  title={On Cumulative Sums of Random Variables},
  author={Wald, Abraham},
  journal={Annals of Mathematical Statistics},
  volume={15},
  number={3},
  pages={283--296},
  year={1944}
}

@ARTICLE{Fraser1956-qf,
  title     = "A vector form of the {Wald-Wolfowitz-Hoeffding} theorem",
  author    = "Fraser, D A S",
  journal   = "Ann. Math. Stat.",
  publisher = "Institute of Mathematical Statistics",
  volume    =  27,
  number    =  2,
  pages     = "540--543",
  month     =  jun,
  year      =  1956
}

@book{lehmann1975,
  title={Nonparametrics: Statistical Methods Based on Ranks},
  author={Lehmann, Erich L},
  year={1975},
  publisher={Holden-Day}
}

@ARTICLE{Liu2014-kk,
  title     = "Large sample randomization inference of causal effects in the
               presence of interference",
  author    = "Liu, Lan and Hudgens, Michael G",
  abstract  = "Recently, increasing attention has focused on making causal
               inference when interference is possible. In the presence of
               interference, treatment may have several types of effects. In
               this paper, we consider inference about such effects when the
               population consists of groups of individuals where interference
               is possible within groups but not between groups. A two stage
               randomization design is assumed where in the first stage groups
               are randomized to different treatment allocation strategies and
               in the second stage individuals are randomized to treatment or
               control conditional on the strategy assigned to their group in
               the first stage. For this design, the asymptotic distributions
               of estimators of the causal effects are derived when either the
               number of individuals per group or the number of groups grows
               large. Under certain homogeneity assumptions, the asymptotic
               distributions provide justification for Wald-type confidence
               intervals (CIs) and tests. Empirical results demonstrate the
               Wald CIs have good coverage in finite samples and are narrower
               than CIs based on either the Chebyshev or Hoeffding inequalities
               provided the number of groups is not too small. The methods are
               illustrated by two examples which consider the effects of
               cholera vaccination and an intervention to encourage voting.",
  journal   = "J. Am. Stat. Assoc.",
  publisher = "Informa UK Limited",
  volume    =  109,
  number    =  505,
  pages     = "288--301",
  month     =  jan,
  year      =  2014,
  keywords  = "Normal mixture; causal inference; confidence interval;
               interference; randomization",
  language  = "en"
}

@ARTICLE{Ding2018-lr,
  title     = "A randomization-based perspective on analysis of variance: a
               test statistic robust to treatment effect heterogeneity",
  author    = "Ding, Peng and Dasgupta, Tirthankar",
  journal   = "Biometrika",
  publisher = "Oxford University Press (OUP)",
  volume    =  105,
  number    =  1,
  pages     = "45--56",
  month     =  mar,
  year      =  2018,
  language  = "en"
}

@article{morgan2012,
  title={Rerandomization to Improve Covariate Balance in Experiments},
  author={Morgan, Kari L and Rubin, Donald B},
  journal={Annals of Statistics},
  volume={40},
  number={2},
  pages={1263--1282},
  year={2012}
}

@book{rosenberger2016,
  title={Randomization in Clinical Trials: Theory and Practice},
  author={Rosenberger, William F and Lachin, John M},
  year={2016},
  publisher={Wiley}
}

@article{article,
author = {Johari, Ramesh and Koomen, Pete and Pekelis, Leonid and Walsh, David},
year = {2021},
month = {08},
pages = {},
title = {Always Valid Inference: Continuous Monitoring of A/B Tests},
volume = {70},
journal = {Operations Research},
doi = {10.1287/opre.2021.2135}
}

@incollection{ATHEY201773,
title = {Chapter 3 - The Econometrics of Randomized Experiments},
editor = {Abhijit Vinayak Banerjee and Esther Duflo},
series = {Handbook of Economic Field Experiments},
publisher = {North-Holland},
volume = {1},
pages = {73-140},
year = {2017},
booktitle = {Handbook of Field Experiments},
issn = {2214-658X},
doi = {https://doi.org/10.1016/bs.hefe.2016.10.003},
url = {https://www.sciencedirect.com/science/article/pii/S2214658X16300174},
author = {S. Athey and G.W. Imbens},
keywords = {Causality, Potential outcomes, Random assignment, Randomized experiments, Regression analyses, C01, C13, C18, C21, C52, C54},
abstract = {In this chapter, we present econometric and statistical methods for analyzing randomized experiments. For basic experiments, we stress randomization-based inference as opposed to sampling-based inference. In randomization-based inference, uncertainty in estimates arises naturally from the random assignment of the treatments, rather than from hypothesized sampling from a large population. We show how this perspective relates to regression analyses for randomized experiments. We discuss the analyses of stratified, paired, and clustered randomized experiments, and we stress the general efficiency gains from stratification. We also discuss complications in randomized experiments such as noncompliance. In the presence of noncompliance, we contrast intention-to-treat analyses with instrumental variables analyses allowing for general treatment effect heterogeneity. We consider, in detail, estimation and inference for heterogenous treatment effects in settings with (possibly many) covariates. These methods allow researchers to explore heterogeneity by identifying subpopulations with different treatment effects while maintaining the ability to construct valid confidence intervals. We also discuss optimal assignment to treatment based on covariates in such settings. Finally, we discuss estimation and inference in experiments in settings with interactions between units, both in general network settings and in settings where the population is partitioned into groups with all interactions contained within these groups.}
}

@ARTICLE{Kato2020-qo,
  title        = "Efficient adaptive experimental design for average treatment
                  effect estimation",
  author       = "Kato, Masahiro and Ishihara, Takuya and Honda, Junya and
                  Narita, Yusuke",
  abstract     = "We study how to efficiently estimate average treatment
                  effects (ATEs) using adaptive experiments. In adaptive
                  experiments, experimenters sequentially assign treatments to
                  experimental units while updating treatment assignment
                  probabilities based on past data. We start by defining the
                  efficient treatment-assignment probability, which minimizes
                  the semiparametric efficiency bound for ATE estimation. Our
                  proposed experimental design estimates and uses the efficient
                  treatment-assignment probability to assign treatments. At the
                  end of the proposed design, the experimenter estimates the
                  ATE using a newly proposed Adaptive Augmented Inverse
                  Probability Weighting (A2IPW) estimator. We show that the
                  asymptotic variance of the A2IPW estimator using data from
                  the proposed design achieves the minimized semiparametric
                  efficiency bound. We also analyze the estimator's
                  finite-sample properties and develop nonparametric and
                  nonasymptotic confidence intervals that are valid at any
                  round of the proposed design. These anytime valid confidence
                  intervals allow us to conduct rate-optimal sequential
                  hypothesis testing, allowing for early stopping and reducing
                  necessary sample size.",
  year         =  2020,
  primaryClass = "stat.ML",
  eprint       = "2002.05308"
}

@InProceedings{pmlr-v236-cook24a,
  title = 	 {Semiparametric Efficient Inference in Adaptive Experiments},
  author =       {Cook, Thomas and Mishler, Alan and Ramdas, Aaditya},
  booktitle = 	 {Proceedings of the Third Conference on Causal Learning and Reasoning},
  pages = 	 {1033--1064},
  year = 	 {2024},
  editor = 	 {Locatello, Francesco and Didelez, Vanessa},
  volume = 	 {236},
  series = 	 {Proceedings of Machine Learning Research},
  month = 	 {01--03 Apr},
  publisher =    {PMLR},
  pdf = 	 {https://proceedings.mlr.press/v236/cook24a/cook24a.pdf},
  url = 	 {https://proceedings.mlr.press/v236/cook24a.html},
  abstract = 	 {We consider the problem of efficient inference of the <em>Average Treatment Effect</em> in a sequential experiment where the policy governing the assignment of subjects to treatment or control can change over time. We first provide a central limit theorem for the Adaptive Augmented Inverse-Probability Weighted estimator, which is semiparametric efficient, under weaker assumptions than those previously made in the literature. This central limit theorem enables efficient inference at fixed sample sizes. We then consider a sequential inference setting, deriving both asymptotic and nonasymptotic confidence sequences that are considerably tighter than previous methods. These anytime-valid methods enable inference under data-dependent stopping times (sample sizes). Additionally, we use propensity score truncation techniques from the recent off-policy estimation literature to reduce the finite sample variance of our estimator without affecting the asymptotic variance. Empirical results demonstrate that our methods yield narrower confidence sequences than those previously developed in the literature while maintaining time-uniform error control.}
}

@article{horvitz1952,
  title={A Generalization of Sampling Without Replacement From a Finite Universe},
  author={Horvitz, Daniel G and Thompson, Donovan J},
  journal={Journal of the American Statistical Association},
  volume={47},
  number={260},
  pages={663--685},
  year={1952}
}

@book{tsiatis2006,
  title={Semiparametric Theory and Missing Data},
  author={Tsiatis, Anastasios A},
  year={2006},
  publisher={Springer}
}

@article{robins1994,
  title={Estimation of Regression Coefficients When Some Regressors are not Always Observed},
  author={Robins, James M and Rotnitzky, Andrea and Zhao, Lue Ping},
  journal={Journal of the American Statistical Association},
  volume={89},
  number={427},
  pages={846--866},
  year={1994}
}

@ARTICLE{Imbens2004-nc,
  title     = "Nonparametric estimation of average treatment effects under
               exogeneity: A review",
  author    = "Imbens, Guido W",
  journal   = "Rev. Econ. Stat.",
  publisher = "MIT Press - Journals",
  volume    =  86,
  number    =  1,
  pages     = "4--29",
  month     =  feb,
  year      =  2004,
  language  = "en"
}

@ARTICLE{Shen2020-zr,
  title     = "Challenges and opportunities with causal discovery algorithms:
               Application to Alzheimer's pathophysiology",
  author    = "Shen, Xinpeng and Ma, Sisi and Vemuri, Prashanthi and Simon,
               Gyorgy and {Alzheimer's Disease Neuroimaging Initiative}",
  abstract  = "Causal Structure Discovery (CSD) is the problem of identifying
               causal relationships from large quantities of data through
               computational methods. With the limited ability of traditional
               association-based computational methods to discover causal
               relationships, CSD methodologies are gaining popularity. The
               goal of the study was to systematically examine whether (i) CSD
               methods can discover the known causal relationships from
               observational clinical data and (ii) to offer guidance to
               accurately discover known causal relationships. We used
               Alzheimer's disease (AD), a complex progressive disease, as a
               model because the well-established evidence provides a
               ``gold-standard'' causal graph for evaluation. We evaluated two
               CSD methods, Fast Causal Inference (FCI) and Fast Greedy
               Equivalence Search (FGES) in their ability to discover this
               structure from data collected by the Alzheimer's Disease
               Neuroimaging Initiative (ADNI). We used structural equation
               models (which is not designed for CSD) as control. We applied
               these methods under three scenarios defined by increasing
               amounts of background knowledge provided to the methods. The
               methods were evaluated by comparing the resulting causal
               relationships with the ``gold standard'' graph that was
               constructed from literature. Dedicated CSD methods managed to
               discover graphs that nearly coincided with the gold standard.
               For best results, CSD algorithms should be used with
               longitudinal data providing as much prior knowledge as possible.",
  journal   = "Sci. Rep.",
  publisher = "Springer Science and Business Media LLC",
  volume    =  10,
  number    =  1,
  pages     = "2975",
  month     =  feb,
  year      =  2020,
  copyright = "https://creativecommons.org/licenses/by/4.0",
  language  = "en"
}

@ARTICLE{Kaplan2016-ok,
  title     = "Causal inference with large-scale assessments in education from
               a Bayesian perspective: a review and synthesis",
  author    = "Kaplan, David",
  journal   = "Large Scale Assess. Educ.",
  publisher = "Springer Science and Business Media LLC",
  volume    =  4,
  number    =  1,
  month     =  dec,
  year      =  2016,
  language  = "en"
}

@ARTICLE{Finkelstein2020-jw,
  title     = "Welfare analysis meets causal inference",
  author    = "Finkelstein, Amy and Hendren, Nathaniel",
  abstract  = "We describe a frame work for empirical welfare analysis that
               uses the causal estimates of a policy's impact on net government
               spending. This framework provides guidance for which causal
               effects are (and are not) needed for empirical welfare analysis
               of public policies. The key ingredient is the construction of
               each policy's marginal value of public funds (MVPF). The MVPF is
               the ratio of beneficiaries' willingness to pay for the policy to
               the net cost to the government. We discuss how the MVPF relates
               to ``traditional'' welfare analysis tools such as the marginal
               excess burden and marginal cost of public funds. We show how the
               MVPF can be used in practice by applying it to several canonical
               empirical applications from public finance, labor, development,
               trade, and industrial organization.",
  journal   = "J. Econ. Perspect.",
  publisher = "American Economic Association",
  volume    =  34,
  number    =  4,
  pages     = "146--167",
  month     =  nov,
  year      =  2020,
  language  = "en"
}

@ARTICLE{Shi2023-at,
  title     = "Dynamic causal effects evaluation in {A/B} testing with a
               reinforcement learning framework",
  author    = "Shi, Chengchun and Wang, Xiaoyu and Luo, Shikai and Zhu, Hongtu
               and Ye, Jieping and Song, Rui",
  journal   = "J. Am. Stat. Assoc.",
  publisher = "Informa UK Limited",
  volume    =  118,
  number    =  543,
  pages     = "2059--2071",
  month     =  jul,
  year      =  2023,
  copyright = "http://creativecommons.org/licenses/by-nc-nd/4.0/",
  language  = "en"
}

@ARTICLE{Noether1949-br,
  title     = "On a theorem by Wald and Wolfowitz",
  author    = "Noether, Gottfried E",
  journal   = "Ann. Math. Stat.",
  publisher = "Institute of Mathematical Statistics",
  volume    =  20,
  number    =  3,
  pages     = "455--458",
  month     =  sep,
  year      =  1949
}

@ARTICLE{Hoeffding1963-tc,
  title     = "Probability inequalities for sums of bounded random variables",
  author    = "Hoeffding, Wassily",
  journal   = "J. Am. Stat. Assoc.",
  publisher = "Informa UK Limited",
  volume    =  58,
  number    =  301,
  pages     = "13--30",
  month     =  mar,
  year      =  1963,
  language  = "en"
}

@article{1177692698,
     author = {Miller, R. G. and Sen, Pranab Kumar},
     title = {Weak Convergence of $U$-Statistics and Von Mises' Differentiable Statistical Functions},
     journal = {Ann. Math. Statist.},
     volume = {43},
     number = {6},
     year = {1972},
     pages = { 31-41},
     language = {en},
     url = {http://dml.mathdoc.fr/item/1177692698}
}

@article{Fogarty2018,
    author = {Fogarty, Colin B.},
    title = {On Mitigating the Analytical Limitations of Finely Stratified Experiments},
    journal = {Journal of the Royal Statistical Society Series B: Statistical Methodology},
    volume = {80},
    number = {5},
    pages = {1035-1056},
    year = {2018},
    month = {08},
    issn = {1369-7412},
    doi = {10.1111/rssb.12290},
    url = {https://doi.org/10.1111/rssb.12290},
    eprint = {https://academic.oup.com/jrsssb/article-pdf/80/5/1035/49269533/jrsssb_80_5_1035.pdf},
}

@article{FSM1979,
    author = {Morris, C.},
    title = {A finite selection model for experimental design of the health insurance study},
    journal = {Journal of Econometrics},
    volume = {11},
    pages = {43-61},
    year = {1979}
}

@article{Ravi2024,
url = {https://doi.org/10.1515/jci-2023-0046},
title = {Optimal allocation of sample size for randomization-based inference from 2K factorial designs},
title = {},
author = {Arun Ravichandran and Nicole E. Pashley and Brian Libgober and Tirthankar Dasgupta},
pages = {20230046},
volume = {12},
number = {1},
journal = {Journal of Causal Inference},
doi = {doi:10.1515/jci-2023-0046},
year = {2024},
lastchecked = {2025-10-16}
}

@inproceedings{NEURIPS2021_eff30581,
 author = {Bibaut, Aurelien and Dimakopoulou, Maria and Kallus, Nathan and Chambaz, Antoine and van der Laan, Mark},
 booktitle = {Advances in Neural Information Processing Systems},
 editor = {M. Ranzato and A. Beygelzimer and Y. Dauphin and P.S. Liang and J. Wortman Vaughan},
 pages = {28548--28559},
 publisher = {Curran Associates, Inc.},
 title = {Post-Contextual-Bandit Inference},
 url = {https://proceedings.neurips.cc/paper_files/paper/2021/file/eff3058117fd4cf4d4c3af12e273a40f-Paper.pdf},
 volume = {34},
 year = {2021}
}

@ARTICLE{Hadad2021-jg,
  title     = "Confidence intervals for policy evaluation in adaptive
               experiments",
  author    = "Hadad, Vitor and Hirshberg, David A and Zhan, Ruohan and Wager,
               Stefan and Athey, Susan",
  abstract  = "Adaptive experimental designs can dramatically improve
               efficiency in randomized trials. But with adaptively collected
               data, common estimators based on sample means and inverse
               propensity-weighted means can be biased or heavy-tailed. This
               poses statistical challenges, in particular when the
               experimenter would like to test hypotheses about parameters that
               were not targeted by the data-collection mechanism. In this
               paper, we present a class of test statistics that can handle
               these challenges. Our approach is to adaptively reweight the
               terms of an augmented inverse propensity-weighting estimator to
               control the contribution of each term to the estimator's
               variance. This scheme reduces overall variance and yields an
               asymptotically normal test statistic. We validate the accuracy
               of the resulting estimates and their CIs in numerical
               experiments and show that our methods compare favorably to
               existing alternatives in terms of mean squared error, coverage,
               and CI size.",
  journal   = "Proc. Natl. Acad. Sci. U. S. A.",
  publisher = "Proceedings of the National Academy of Sciences",
  volume    =  118,
  number    =  15,
  pages     = "e2014602118",
  month     =  apr,
  year      =  2021,
  keywords  = "adaptive experimentation; central limit theorem; frequentist
               inference; multiarmed bandits; policy evaluation",
  copyright = "https://creativecommons.org/licenses/by-nc-nd/4.0/",
  language  = "en"
}

@inproceedings{NEURIPS2020_6fd86e0a,
 author = {Zhang, Kelly and Janson, Lucas and Murphy, Susan},
 booktitle = {Advances in Neural Information Processing Systems},
 editor = {H. Larochelle and M. Ranzato and R. Hadsell and M.F. Balcan and H. Lin},
 pages = {9818--9829},
 publisher = {Curran Associates, Inc.},
 title = {Inference for Batched Bandits},
 url = {https://proceedings.neurips.cc/paper_files/paper/2020/file/6fd86e0ad726b778e37cf270fa0247d7-Paper.pdf},
 volume = {33},
 year = {2020}
}

@ARTICLE{Zhang2021-zy,
  title   = "Statistical Inference with {M-Estimators} on Adaptively Collected
             Data",
  author  = "Zhang, K W and Janson, L and Murphy, S A",
  journal = "Adv Neural Inf Process Syst",
  volume  =  34,
  pages   = "7460--7471",
  month   =  dec,
  year    =  2021
}

@article{bb691c50-25c9-325f-b1ee-0cc92167cbd1,
 ISSN = {00905364},
 URL = {http://www.jstor.org/stable/3448510},
 abstract = {A general doubly adaptive biased coin design is proposed for the allocation of subjects to K treatments in a clinical trial. This design follows the same spirit as Efron's biased coin design and applies to the cases where the desired allocation proportions are unknown, but estimated sequentially. Strong consistency, a law of the iterated logarithm and asymptotic normality of this design are obtained under some widely satisfied conditions. For two treatments, a new family of designs is proposed and shown to be less variable than both the randomized play-the-winner rule and the adaptive randomized design. Also the proposed design tends toward a randomization scheme (with a fixed target proportion) as the size of the experiment increases.},
 author = {Feifang Hu and Li-Xin Zhang},
 journal = {The Annals of Statistics},
 number = {1},
 pages = {268--301},
 publisher = {Institute of Mathematical Statistics},
 title = {Asymptotic Properties of Doubly Adaptive Biased Coin Designs for Multitreatment Clinical Trials},
 urldate = {2026-01-09},
 volume = {32},
 year = {2004}
}

@BOOK{Hu2006-vc,
  title     = "The theory of response-adaptive randomization in clinical trials",
  author    = "Hu, Feifang and Rosenberger, William F",
  abstract  = "Dedication. Preface. 1. Introduction. 1.1 Randomization in
               clinical trials. 1.2 Response-adaptive randomization in a
               historical context. 1.3 Outline of the book. 1.4 References. 2.
               Fundamental Questions of response-Adaptive Randomization. 2.1
               Optimal allocation. 2.2 The realtionship between power and
               response-adaptive randomization. 2.3 The relationship for K > 2
               treatments. 2.4 Asymptotically best procedures. 2.5 References.
               3. Likelihood-based Inference. 3.1 Data structure and
               Likelihood. 3.2 Asymptotic properties of maximum likelihood
               estimators. 3.4 Conclusion. 3.5 References. 4. Procedures Based
               on Urn Models. 4.1 Generalized Friedman's urn. 4.2 The class of
               ternary urn models. 4.3 References. 5. Procedures Based on
               Sequential Estimation. 5.1 Examples. 5.2 Properties of
               procedures based on sequential estimation for K = 2. 5.3
               Notation and conditions for the general framework. 5.4
               Asymptotic results and some examples. 5.5 Proving the main
               theorems. 5.6 References. 6. Sample Size Calculation. 6.1 Power
               of a randomization procedure. 6.2 Three types of sample size.
               6.3 Examples. 6.4 References. 7. Additional Considerations. 7.1
               The effect of delayed response. 7.2 Continuous responses. 7.3
               Multiple (K > 2) treatments. 7.4 Accommodating heterogeneity.
               7.5 References. 8. Implications for the Practice of Clinical
               Trials. 8.1 Standards. 8.2 Binary response. 8.3 Continuous
               responses. 8.4 The effect of delayed response. 8.5 Conclusions.
               8.6 References. 9. Incorporating Covariates. 9.1 Introduction
               and examples. 9.2 General framework and asymptotic results. 9.3
               Generalized linear models. 9.4 Two treatments with binary
               responses. 9.5 Conclusions. 9.6 References. 10. Conclusions and
               Open Problems. 10.1 Conclusions. 10.2 Open problems. 10.3
               References. Appendix A: Supporting Technical Material. A.1 Some
               matrix theory. A.2 Jordan decomposition. A.3 Matrix recursions.
               A.4 Martingales. A.5 Cramer-Wold device. A.6 Multivariate
               martingales. A.7 Multivariate Taylor's expansion. A.8
               References. Appendix B: Proofs. B.1 Proofs theorems in Chapter
               4. B.2 Proof of theorems in Chapter 5. B.3 Proof of theorems in
               Chapter 7. B.4 References. Author Index. Subject Index.",
  publisher = "John Wiley \& Sons",
  series    = "Wiley Series in Probability and Statistics",
  month     =  aug,
  year      =  2006,
  address   = "Nashville, TN",
  language  = "en"
}

@ARTICLE{Ding2023-ku,
  title         = "A First Course in Causal Inference",
  author        = "Ding, Peng",
  abstract      = "I developed the lecture notes based on my ``Causal
                   Inference'' course at the University of California Berkeley
                   over the past seven years. Since half of the students were
                   undergraduates, my lecture notes only required basic
                   knowledge of probability theory, statistical inference, and
                   linear and logistic regressions.",
  month         =  may,
  year          =  2023,
  copyright     = "http://creativecommons.org/licenses/by/4.0/",
  archivePrefix = "arXiv",
  primaryClass  = "stat.ME",
  eprint        = "2305.18793"
}

@ARTICLE{Adaptive_clinical1,
  title         = "Asymptotic properties of doubly adaptive biased coin designs for multitreatment clinical trials",
  author        = "Feifang Hu and Li-Xin Zhang",
  journal   = "Ann. Stat.",
  publisher = "Institute of Mathematical Statistics",
  year = 2004,
  volume    =  32,
  number    =  1,
  pages     = "268--301",
}

@BOOK{Adaptive_clinical2,
  title     = "The theory of response-adaptive randomization in clinical trials",
  author    = "Hu, Feifang and Rosenberger, William F",
  publisher = "John Wiley \& Sons",
  series    = "Wiley Series in Probability and Statistics",
  month     =  aug,
  year      =  2006,
  address   = "Nashville, TN",
  language  = "en"
}

\newpage

\renewcommand{\thesection}{S\arabic{section}}
\renewcommand{\thesection}{S\arabic{section}}
\renewcommand{\thesubsection}{S\arabic{section}.\arabic{subsection}}

\section{Simulation results for Efron’s biased coin design~\cite{Efron1971-yh}}
\label{sec:simulation_for_efron}

As mentioned earlier, the simulations use the same data-generating procedures and parameter settings as in Section~5.1 of the main paper, 
except that treatment assignments now follow Efron’s biased coin design~\cite{Efron1971-yh} rather than Wei’s design~\cite{Wei1978-gd}. The biased-coin parameter is fixed at $\eta = 0.7$ in all simulations.

\begin{figure}[!ht]
\centering
  % --- Non-Additive: IPW ---
  \begin{subfigure}{0.32\textwidth}
    \centering
    \includegraphics[width=\linewidth]{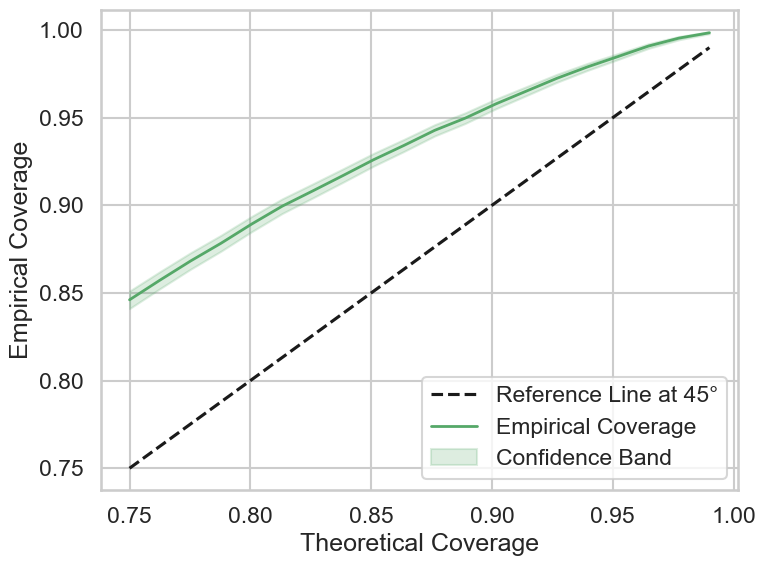}
    \caption{IPW estimator: Non-Additive}
    \label{fig:subfig1iia}
  \end{subfigure}%
  % --- Additive: IPW ---
  \begin{subfigure}{0.32\textwidth}
    \centering
    \includegraphics[width=\linewidth]{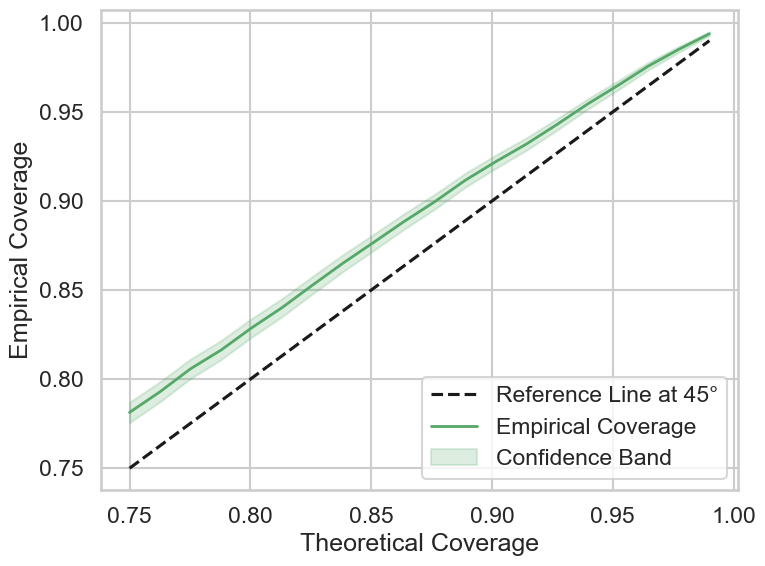}
    \caption{IPW estimator: Additive}
    \label{fig:subfig1ia}
  \end{subfigure}%
  % --- Log-Additive: IPW ---
  \begin{subfigure}{0.32\textwidth}
    \centering
    \includegraphics[width=\linewidth]{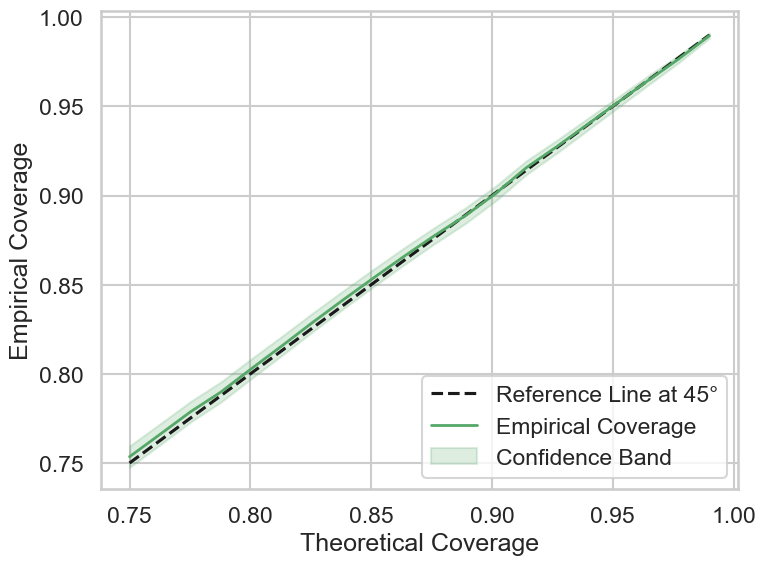}
    \caption{IPW estimator: Log-Additive}
    \label{fig:subfig1iva}
  \end{subfigure} \\[1ex]
   % --- Non-Additive: AIPW ---
  \begin{subfigure}{0.32\textwidth}
    \centering
    \includegraphics[width=\linewidth]{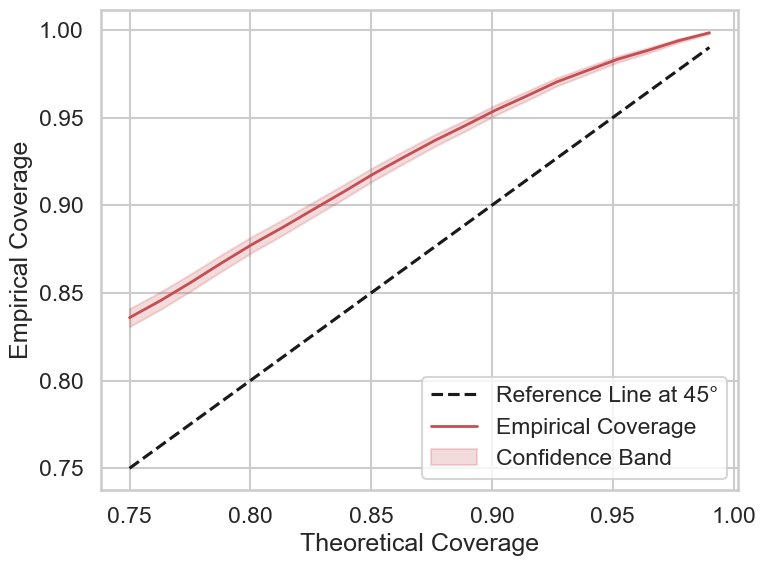}
    \caption{AIPW estimator: Non-Additive}
    \label{fig:subfig1iib}
  \end{subfigure} %
    % --- Additive: AIPW ---
  \begin{subfigure}{0.32\textwidth}
    \centering
    \includegraphics[width=\linewidth]{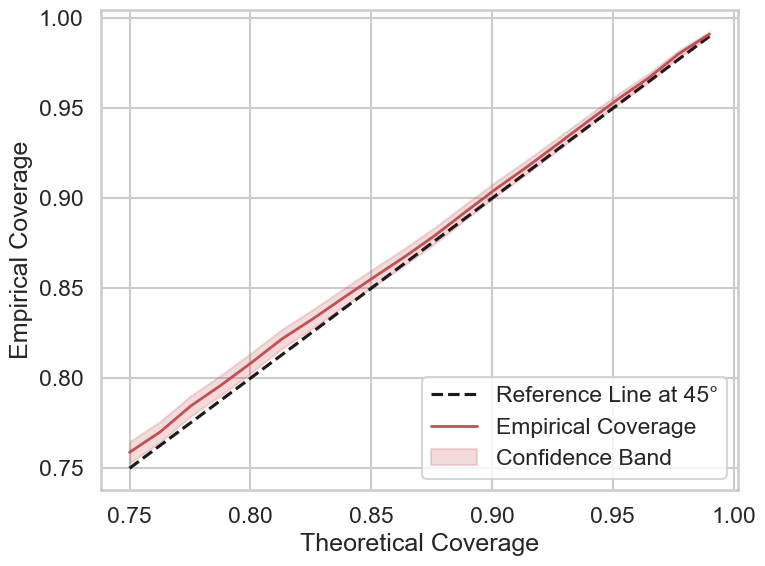}
    \caption{AIPW estimator: Additive}
    \label{fig:subfig1ib}
  \end{subfigure} %
   % --- Log-Additive: AIPW ---
  \begin{subfigure}{0.32\textwidth}
    \centering
    \includegraphics[width=\linewidth]{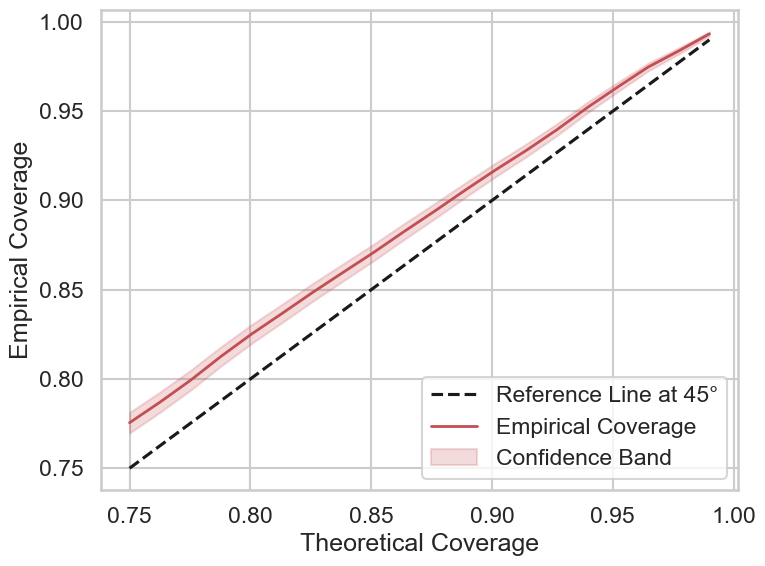}
    \caption{AIPW estimator: Log-Additive}
    \label{fig:subfig1ivb}
  \end{subfigure} %
  % --- Main caption ---
  \caption{Comparison of the theoretical and empirical coverages for Efron's design.}
  \label{fig:efronplots}
\end{figure}

\begin{figure}[!ht]
\centering
  % --- Non-Additive: IPW ---
  \begin{subfigure}{0.32\textwidth}
    \centering
    \includegraphics[width=\linewidth]{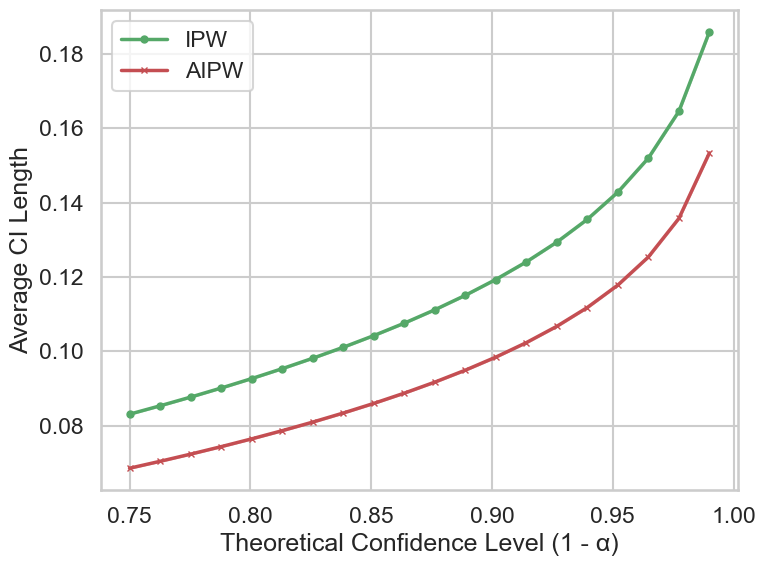}
    \caption{Non-Additive case}
    \label{fig:subfig2iiia}
  \end{subfigure}%
  \hfill
  % --- Non-Additive: AIPW ---
  \begin{subfigure}{0.32\textwidth}
    \centering
    \includegraphics[width=\linewidth]{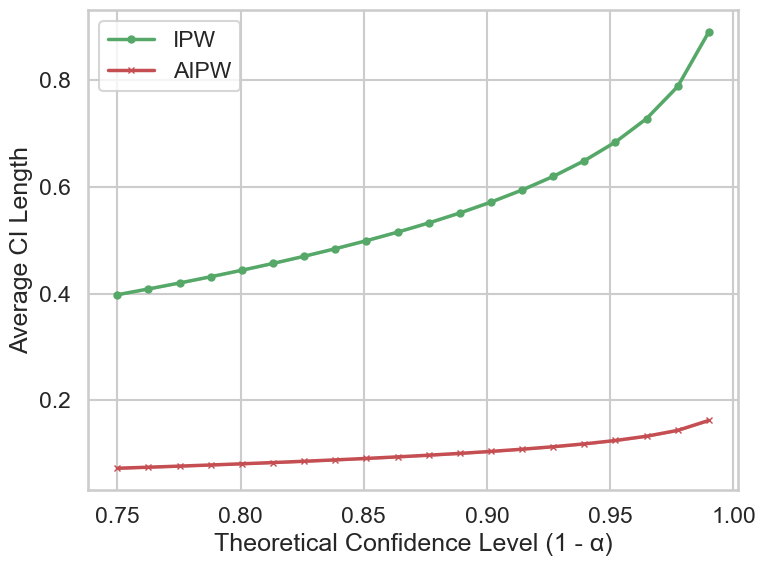}
    \caption{Additive case}
    \label{fig:subfig2iiib}
  \end{subfigure}%
  \hfill
  % --- Log-Additive: IPW ---
  \begin{subfigure}{0.32\textwidth}
    \centering
    \includegraphics[width=\linewidth]{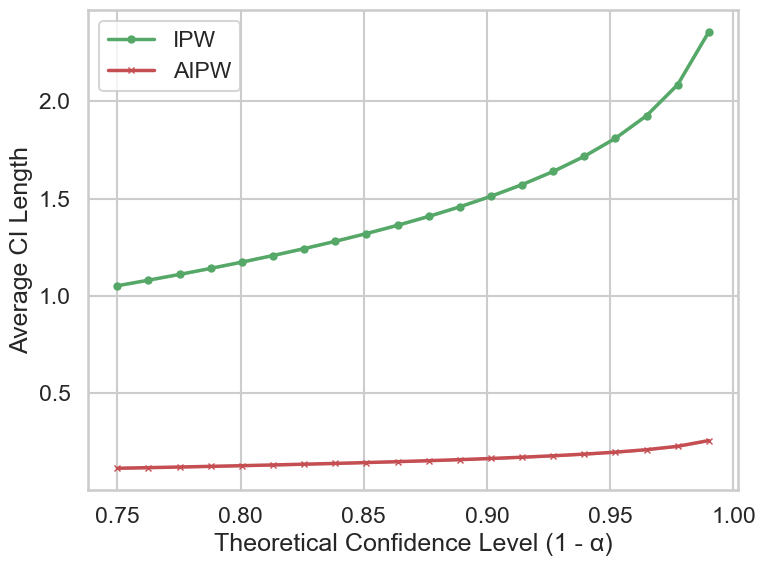}
    \caption{Log-Additive case}
    \label{fig:subfig2va}
  \end{subfigure}%
  % --- Main caption ---
  \caption{Comparison of the average lengths of confidence intervals for Efron's design.}
  \label{fig:efronciplots}
\end{figure}

Figure~\ref{fig:efronplots} shows the empirical coverage of confidence intervals for the IPW and AIPW estimators, 
while Figure~\ref{fig:efronciplots} reports the corresponding average interval lengths. 
The lengths are computed as for Wei’s design, using $\widehat{\vipwefron}$ for IPW and $\widehat{\vaipwefron}$ for AIPW, 
as specified in equations~\eqref{eqn:vipwefron_hat} and~\eqref{eqn:vaipwefron_hat} of the main paper, respectively. Table \ref{tab:bias_vr_ef} summarizes Monte Carlo bias and variance ration diagnostics for $N=500$ and $N=1000$. The results for Efron’s biased coin design are consistent with those observed under Wei’s adaptive coin design and discussed in Section 5 of the main manuscript. 

\begin{table}[ht]
\centering
\caption{Finite sample bias and variance ratio for $N=500$ and $N=10000$ under Efron's design.}
\label{tab:bias_vr_ef}
\small
\setlength{\tabcolsep}{4pt}
\begin{tabular}{lll
                S[table-format=+1.5] S[table-format=+1.5]
                S[table-format=2.4]  S[table-format=2.4]}
\toprule
& & 
& \multicolumn{2}{c}{Bias}
& \multicolumn{2}{c}{Variance Ratio} \\
\cmidrule(lr){4-5}\cmidrule(lr){6-7}
Model & Design & Est
& {N=500} & {N=10000}
& {N=500} & {N=10000} \\
\midrule

\multicolumn{7}{l}{\textbf{Additive}}\\
\addlinespace[2pt]
\quad & Efron & IPW  &  0.006335 & -0.0002903& 0.8890 & 0.8387 \\
\quad & Efron & AIPW &  0.0003965& 0.0002315 & 0.9238 & 0.9827 \\
\addlinespace[4pt]

\multicolumn{7}{l}{\textbf{Non-additive}}\\
\addlinespace[2pt]
\quad & Efron & IPW  & -0.0007978& 0.0007354 & 0.8745 & 0.8672 \\
\quad & Efron & AIPW & -0.01473  & -0.0007568& 1.0290 & 0.9555 \\
\addlinespace[4pt]

\multicolumn{7}{l}{\textbf{Log-additive}}\\
\addlinespace[2pt]
\quad & Efron & IPW  &  0.02869  & -0.004243 & 0.9929 & 0.9538 \\
\quad & Efron & AIPW & -0.0004171& 0.0002339 & 0.7379 & 1.0280 \\
\bottomrule
\end{tabular}
\end{table}

\clearpage

\section{Proofs of Theorems}
In this section, we collect the proofs of our main Theorems~\ref{Thm:ipw}-\ref{corr:vaipw_est_weak}. We begin by recalling the IPW and AIPW estimators introduced in Section~\ref{sec:problem_desc} of the main paper. 
Before proceeding to the proofs, observe that when $K_i = 1$ we have $Y_i = \Yione$, and when $K_i = 0$ we have $Y_i = \Yizero$. 
Consequently, $K_iY_i = K_i\Yione$ and $(1-K_i)Y_i = (1-K_i)\Yizero$. 
Thus, the estimators simplify to  
\begin{subequations}
    \begin{align}
        \label{eqn:IPW1}
        \snipw &= \frac{1}{N} \sum_{i=1}^N \left\{\frac{K_i\Yione}{p_i} -  \frac{(1 - K_i)\Yizero}{1 - p_i}\right\}, \\
        \label{eq:SNAIPW1}
        \snaipw &= \frac{1}{N} \sum_{i=1}^N \left[\left\{  \frac{K_i\left(\Yione - \widehat{Y}_{i-1}(1)\right)}{p_i} + \widehat{Y}_{i-1}(1) \right\}
        - \left\{\frac{(1 - K_i)\left(\Yizero - \widehat{Y}_{i-1}(0)\right)}{1 - p_i} + \widehat{Y}_{i-1}(0) \right\}\right],
    \end{align}
\end{subequations}
where $\widehat{Y}_{1}(0) = \widehat{Y}_{1}(1) = 0$, and for $i \geq 2$
\begin{align*}
%\label{eqn:yi-hat}
   \widehat{Y}_{i-1}(0) = \frac{1}{i-1} \sum_{j = 1}^{i - 1} \frac{(1 - K_j)Y_j(0) }{1 - p_j}, \qquad \text{and} \qquad \widehat{Y}_{i-1}(1) = \frac{1}{i-1} \sum_{j = 1}^{i - 1} \frac{K_jY_j(1) }{p_j}.
\end{align*}
%for $i \geq 2$
%\begin{align}
%\label{eqn:yi-hat_}
%   \widehat{Y}_{i-1}(0) = \frac{1}{i-1} \sum_{j = 1}^{i - 1} \frac{(1 - K_j)Y_j(0)}{1 - p_j}, \qquad  \text{and} \qquad \widehat{Y}_{i-1}(1) = \frac{1}{i-1} \sum_{j = 1}^{i - 1} \frac{K_jY_j(1)}{p_j}.
%\end{align}
In what follows, we work primarily with the representations~\eqref{eqn:IPW1} and~\eqref{eq:SNAIPW1}.

\subsection{Proof of Theorem~\ref{Thm:ipw} (CLT for the IPW estimator)}
\label{sec:Proof-of-Thm:ipw}
We begin by expressing the centered and scaled IPW  estimator $\sqrt{N}\left(\snipw - \ate\right) $ as a sum of a martingale difference sequence. 
This allows us to prove the central limit theorem for $\snipw$ via an application of the martingale central limit theorem \cite[Chapter 3]{Hall1980-ny}.  
\begin{align*}
\sqrt{N}\left(\snipw - \ate\right) 
%&=\sum_{i=1}^N \frac{1}{\sqrt{N}} \left\{ \left(\frac{K_i\Yione }{p_i} - \Yione\right) - \left(\frac{(1 - K_i)\Yizero}{1 - p_i} - \Yizero\right)\right\} \\
&= \sum_{i=1}^N \frac{K_i-p_i}{\sqrt{N}}  \left( \frac{\Yizero}{1 - p_i} + \frac{\Yione}{p_i}\right) \;=\; \sum_{i=1}^N \xi_i,
\end{align*}
where $\xi_i = \tfrac{K_i-p_i}{\sqrt{N}}  \left(  \tfrac{\Yizero}{1 - p_i} + \tfrac{\Yione}{p_i}\right).$
%\[\xi_i = \frac{K_i-p_i}{\sqrt{N}}  \left(  \frac{\Yizero}{1 - p_i} + \frac{\Yione}{p_i}\right).\]
Now, 
\begin{align*}
\E{\left[\xi_i \,\middle|\, \sigmafield\right]} = \frac{1}{\sqrt{N}}\left( \frac{\Yizero}{1 - p_i} + \frac{\Yione}{p_i}\right) \E{[K_i-p_i | \sigmafield]}  = 0,
\end{align*}
implying $\{\xi_i\}_{i=1}^N$ are terms of a martingale difference sequence, and that $\snipw$ is an unbiased estimator for $\ate$. Next, we compute the total conditional variance:   
\begin{align*}
\sum_{i=1}^N\E{\left[\xi_i^2 \,\middle|\, \sigmafield\right]} &= \sum_{i=1}^N\frac{1}{N}\left(\frac{\Yizero}{1-p_i} + \frac{\Yione}{p_i}\right)^2\E{\left[(K_i - p_i)^2 \,\middle|\, \sigmafield\right]} \\ %&= \sum_{i=1}^N\frac{1}{N}\left(\frac{\Yizero}{1-p_i} + \frac{\Yione}{p_i}\right)^2p_i(1-p_i) \\ 
&= \frac{1}{N}\sum_{i=1}^N \frac{p_i}{1-p_i}\Yizero^2 + \frac{1}{N}\sum_{i=1}^N \frac{1-p_i}{p_i}\Yione^2 + \frac{2}{N}\sum_{i=1}^N \Yizero\Yione.    
\end{align*}
In order to invoke the martingale central limit theorem \cite[Chapter 3]{Hall1980-ny}, we need to ensure that the total conditional variance converges in probability  to a constant and that the Lindeberg condition is satisfied. 

\vspace{10pt}
\noindent 
\underline{\textbf{For Strongly Stable Design:}} Under the assumption of strong design stability, we have $p_i \stackrel{\mathbb{P}}{\rightarrow} \pstar.$
%\begin{align*}
%p_i \stackrel{\mathbb{P}}{\rightarrow} \pstar.
%\end{align*} 
Invoking the continuous mapping theorem in conjunction with Assumption~\ref{assn:IPW}(a) yields $\tfrac{p_i}{1-p_i} \stackrel{\mathbb{P}}{\rightarrow} \tfrac{\pstar}{1-\pstar}.$
%\begin{align*}
%\frac{p_i}{1-p_i} \stackrel{\mathbb{P}}{\rightarrow} \frac{\pstar}{1-\pstar}.
%\end{align*}
Therefore, Assumption~\ref{assn:IPW} along with Lemma~\ref{lemma:aibi} implies 
\begin{align*}
\frac{1}{N}\sum_{i=1}^N \frac{p_i}{1-p_i}\Yizero^2 \stackrel{\mathbb{P}}{\rightarrow} \moment_{0} \frac{\pstar}{1-\pstar}.  
\end{align*}
Similarly,
\begin{align*}
\frac{1}{N} \sum_{i=1}^N \frac{1-p_i}{p_i} \Yione^2 \stackrel{\mathbb{P}}{\rightarrow} \moment_{1} \frac{1-\pstar}{\pstar} \qquad \text{and} \qquad \frac{2}{N} \sum_{i=1}^N \Yione\Yizero \xrightarrow{} 2\crossmoment.
\end{align*}
Overall we have,
\begin{align*}
\sum_{i=1}^N \E\left[\xi_i^2 \,\middle|\, \sigmafield \right] \stackrel{\mathbb{P}}{\rightarrow} {\moment_{0} \frac{\pstar}{1-\pstar} + \moment_{1} \frac{1 - \pstar}{\pstar} + 2\crossmoment}.   
\end{align*}
%Equivalently, \[\sum_{i=1}^N \E\left[\xi_i'^2 \,\middle|\, \sigmafield \right] \xrightarrow{\Prob} 1,\] where $\xi_i'= \sqrt{\frac{N}{\vipw}}\xi_i,$ implying scaled conditional variance converges in probability.

\vspace{10pt}
\noindent 
\underline{\textbf{For Weakly Stable Design:}} Under weak design stability, it follows that 
%\begin{align*}
%\frac{1}{N}\sum_{i=1}^N\frac{1}{p_i} \stackrel{\mathbb{P}}{\rightarrow} \frac{1}{\ponestar} \qquad \text{and} \qquad \frac{1}{N}\sum_{i=1}^N\frac{1}{1-p_i} \stackrel{\mathbb{P}}{\rightarrow} \frac{1}{1-\ptwostar}. 
%\end{align*}
%It follows that
\begin{align*}
\frac{1}{N}\sum_{i=1}^N\frac{1-p_i}{p_i} \stackrel{\mathbb{P}}{\rightarrow} \frac{1-\ponestar}{\ponestar} \qquad \text{and} \qquad \frac{1}{N}\sum_{i=1}^N\frac{p_i}{1-p_i} \stackrel{\mathbb{P}}{\rightarrow} \frac{\ptwostar}{1-\ptwostar}.
\end{align*}
Therefore, Assumption~\ref{assn:IPW} and Lemma~\ref{lemma:aiibii} give
\begin{align*}
\sum_{i=1}^N \E\left[\xi_i^2 \,\middle|\, \sigmafield \right] \stackrel{\mathbb{P}}{\rightarrow} {\moment_{0} \frac{\ptwostar}{1-\ptwostar} + \moment_{1} \frac{1 - \ponestar}{\ponestar} + 2\crossmoment}.   
\end{align*}
Combining the two cases, the asymptotic variance of the IPW estimator is
\begin{align}
\vipw =
\begin{cases}
\vipwstrong = \moment_{0}\dfrac{\pstar}{1-\pstar} + \moment_{1}\dfrac{1-\pstar}{\pstar} + 2\crossmoment 
& \text{(strong design stability)}, \\[1.2em]
\vipwweak = \moment_{0}\dfrac{\ptwostar}{1-\ptwostar} + \moment_{1}\dfrac{1-\ponestar}{\ponestar} + 2\crossmoment 
& \text{(weak design stability)}.
\end{cases}
\label{eqn:vipw_cases_proof}
\end{align}

\medskip 
Next, the boundedness of $p_i$ and $Y_i$ ensures that
\begin{align*}
    |\xi_i| = \left|\frac{K_i-p_i}{\sqrt{N}}\left( \frac{\Yizero}{1-p_i} + \frac{\Yione}{p_i}\right)\right|
    = \frac{1}{\sqrt{N}}|K_i-p_i|\left|\frac{\Yizero}{1-p_i} + \frac{\Yione}{p_i}\right| \leq \frac{4M}{\sqrt{N}\delta}.
\end{align*}
Fix $\varepsilon > 0.$ For any $N > \left(\tfrac{4M}{\delta\varepsilon}\right)^2,$ we have $\mathbf{1}_{\{|\xi_i| > \varepsilon\}} = 0$ a.s. Consequently, for such $N,$
\begin{align*}
    \sum_{i=1}^N \E{\left[\xi_i^2\mathbf{1}_{\{|\xi_i| > \varepsilon\}} \,\middle|\, \sigmafield\right]} = 0,
\end{align*}
%\begin{align*}
%\mathbf{1}_{\{|\xi_i| > \varepsilon\}} < \mathbf{1}_{\left\{\frac{4M}{\sqrt{N}\delta} > \varepsilon\right\}} = 0.   
%\end{align*}
and therefore
\begin{align*}
    \lim_{N \to \infty} \sum_{i=1}^N \E{\left[\xi_i^2\mathbf{1}_{\{|\xi_i| > \varepsilon\}} \,\middle|\, \sigmafield\right]} = 0,
\end{align*}
which verifies the Lindeberg condition. Putting together the pieces and  invoking martingale central limit theorem~\cite[Chapter 3]{Hall1980-ny} yields $\sqrt{N}\left(\snipw - \ate\right) \xrightarrow{d} \mathcal{N}\left(0, \vipw\right),$
%\begin{align*}
% \sqrt{N}(\snipw - \ate) \xrightarrow{d} \mathcal{N}\left(0, \vipw\right),   
%\end{align*}
with asymptotic variance $\vipw$ as specified in~\eqref{eqn:vipw_cases_proof}.

\subsection{Proof of Theorem~\ref{corr:vipw_est} \\ (Variance estimation of the IPW estimator under strong design stability)}

\label{sec:Proof-of-Corr:vipw_est}
%Before proceeding to the proof of the theorem, we first show that the estimator $\widehat{p}^\star$ proposed in Remark~\eqref{remark:phat} is consistent for $\pstar$. 

%\textcolor{red}{Change in order needed? TD comment: yes, let's make things consistent with the results}

\vspace{10pt}
%noindent\textbf{\underline{Consistency of $\widehat{p}^\star$ (as proposed in Remark~\ref{remark:phat}):}} 

\noindent
First, we establish the the consistency of $\widehat{m}_0^2$ and $\widehat{m}_1^2$. Before doing this, we first show that
\begin{align}
\frac{N_1}{N} 
= \frac{1}{N}\sum_{i=1}^N K_i 
\stackrel{\mathbb{P}}{\rightarrow} \pstar.
\label{eqn:ki}
\end{align}
We decompose
\begin{align*}
%\label{eqn:pstarconsistencyeqn1}
\frac{1}{N}\sum_{i=1}^N K_i
= \frac{1}{N}\sum_{i=1}^N (K_i - p_i) 
  + \frac{1}{N}\sum_{i=1}^N p_i.
\end{align*}
Under a strongly stable design, since $p_i \stackrel{\mathbb{P}}{\rightarrow} \pstar$, the second term, being the Cesàro mean of the sequence $\{p_i\}_{i \geq 1}$, also converges in probability to $\pstar$.
Hence, it remains to show that
\begin{align}
\label{eqn:pstarconsistencyeqn2}
\frac{1}{N}\sum_{i=1}^N (K_i - p_i) \stackrel{\mathbb{P}}{\rightarrow} 0.
\end{align}
Since $\E\left[K_i - p_i \,\middle|\, \sigmafield\right] = 0$, the summands form a martingale difference sequence. Moreover, by uniform boundedness of $p_i$ and Chebyshev’s inequality, it follows that for any fixed~$\varepsilon > 0$,
\begin{align*}
\Prob\left(\left|\frac{1}{N}\sum_{i=1}^N(K_i-p_i)\right| \geq \varepsilon\right) \leq  \frac{1}{N^2\varepsilon^2}\sum_{i=1}^N\E\!\left[(K_i - p_i)^2\right]
= \frac{1}{N^2\varepsilon^2} \sum_{i=1}^N \E\!\left[p_i(1 - p_i)\right]
\leq \frac{(1-\delta)^2}{N\varepsilon^2} \to 0,
\end{align*} 
implying claim~\eqref{eqn:pstarconsistencyeqn2}. 

\vspace{10pt}
\noindent\textbf{\underline{Consistency of $\widehat{m}^2_{1}$ \& $\widehat{m}^2_{0}$:}}   We now establish the consistency of $\widehat{m}^2_{1}$; the argument for $\widehat{m}^2_{0}$ follows analogously. Recalling from the main paper,
\begin{align*}
\widehat{m}^2_{1}
= \frac{1}{\max\{N_1,1\}}\sum_{i=1}^N K_i \Yione^2,
\quad \text{where} \quad N_1 = \sum_{i=1}^N K_i.
\end{align*}
To establish the consistency of $\widehat{m}_1^2$, it suffices to show that
\begin{align}
\frac{1}{N}\sum_{i=1}^N K_i \Yione^2 \stackrel{\mathbb{P}}{\rightarrow} \pstar \moment_{1}.
\label{eqn:show}
\end{align}
Note that
\begin{align*}
\frac{\max\{N_1,1\}}{N}
= \frac{N_1}{N} + \frac{1}{N}\,\mathbf{1}_{\{N_1=0\}},
\end{align*}
where the second term converges to zero as $N \to \infty$, and $\tfrac{N_1}{N} \stackrel{\mathbb{P}}{\rightarrow} \pstar$ by~\eqref{eqn:ki}. Hence,  
\begin{align}
\label{eqn:maxn1}
\frac{\max\{N_1,1\}}{N} \stackrel{\mathbb{P}}{\rightarrow} \pstar.
\end{align}
Therefore, once~\eqref{eqn:show} holds, Slutsky’s theorem implies the consistency of $\widehat{m}_1^2$. 

Note that $\tfrac{1}{N}\sum_{i=1}^N (K_i - p_i)\Yione^2$ is sum of a martingale difference sequence.
By the boundedness of \(p_i\) and \(Y_i\), together with Chebyshev’s inequality, for any fixed $\varepsilon > 0$,
\begin{align*}
\Prob\left(\left|\frac{1}{N}\sum_{i=1}^N (K_i - p_i)\Yione^2\right| \geq \varepsilon\right) \leq \frac{1}{N^2\varepsilon^2} \sum_{i=1}^N  \E\left[(K_i - p_i)^2 \Yione^4\right]
\leq \frac{M^4(1-\delta)^2}{N\varepsilon^2} \to 0,
\end{align*}
thus implying 
\begin{align}
\label{eqn:m1consistency1}
\frac{1}{N}\sum_{i=1}^N (K_i - p_i)\Yione^2 \stackrel{\mathbb{P}}{\rightarrow} 0.
\end{align}
By strong stability of the design and Assumption~\ref{assn:IPW}, in conjunction with Lemma~\ref{lemma:aibi}, we have 
\begin{align}
\label{eqn:m1consistency2}
\frac{1}{N}\sum_{i=1}^N p_i \Yione^2 \stackrel{\mathbb{P}}{\rightarrow} \pstar \moment_{1}.
\end{align}
Combining implications~\eqref{eqn:m1consistency1} and~\eqref{eqn:m1consistency2} then gives
\begin{align*}
\frac{1}{N}\sum_{i=1}^N K_i \Yione^2 \stackrel{\mathbb{P}}{\rightarrow} \pstar \moment_{1},
%\label{eqn:above}
\end{align*}
proving our claim~\eqref{eqn:show}, and thereby establishing the consistency of $\widehat{m}^2_{1}$.
%Finally, combining~\eqref{eqn:ki} with the above argument~\eqref{eqn:above} and applying Slutsky's theorem, we obtain $\widehat{m}_1^2 \stackrel{\mathbb{P}}{\rightarrow} \moment_1$.

\vspace{10pt}
\noindent\textbf{\underline{Bounding the cross-moment term:}}  By the Cauchy-Schwarz inequality,
\begin{align*}
\left|\frac{1}{N}\sum_{i=1}^N \Yizero\Yione \right|
\leq \left( \frac{1}{N}\sum_{i=1}^N \Yizero^2 \right)^{1/2}
\left( \frac{1}{N}\sum_{i=1}^N \Yione^2 \right)^{1/2}.
\end{align*}
Taking the limit as $N \to \infty$ yields
\begin{align}
|\crossmoment| \leq m_{0} m_{1}.
\label{eqn:inequality2}
\end{align}
Hence, under strong design stability, we have
\begin{align}
\vipwstrong
= \moment_{0}\frac{\pstar}{1-\pstar}
+ \moment_{1}\frac{1-\pstar}{\pstar}
+ 2\crossmoment 
\leq \left(m_{0}\sqrt{\frac{\pstar}{1-\pstar}}
+ m_{1}\sqrt{\frac{1-\pstar}{\pstar}}\right)^2.
\label{eqn:inequality1}
\end{align}
Since $\mhat_0^2$ and $\mhat_1^2$ are consistent for $\moment_0$ and $\moment_1$; non-negativity and the continuous mapping theorem ensures $\mhat_j \stackrel{\mathbb{P}}{\rightarrow} m_j$ for $j\in \{0,1\}$, and hence the variance estimator 
\begin{align*}
\widehat{\vipwstronghat} 
= \left(\widehat{m}_0 \sqrt{\frac{\pstar}{1-\pstar}} 
+ \widehat{m}_1 \sqrt{\frac{1-\pstar}{\pstar}}\right)^2
\end{align*}
is a consistent estimator of 
\begin{align*}
\left(m_{0}\sqrt{\tfrac{\pstar}{1-\pstar}} 
+ m_{1}\sqrt{\tfrac{1-\pstar}{\pstar}}\right)^2,
\end{align*} and hence implies $\widehat{\vipwstrong}$ conservatively estimates $\vipwstrong.$

For $\widehat{\vipwstrong}$ to be consistent for $\vipwstrong$, equality must hold in inequality~\eqref{eqn:inequality1}. This corresponds to the equality case of the Cauchy–Schwarz inequality in~\eqref{eqn:inequality2}, which implies
\begin{align*}
\frac{\Yione}{\Yizero} = c \quad \text{for all } i,
\end{align*}
for some constant $c \in \mathbb{R}$, and hence the potential outcomes are additive on the log scale, i.e., satisfy~\eqref{eqn:log-additive-model}. This completes the proof of the theorem.

\vspace{10pt}
\noindent\textbf{\underline{Consistency of $\widehat{p}^\star$ (proposed in Remark~\ref{remark:phat} under unknown $\pstar$:}}
Here we show that when $\pstar$ is unknown or difficult to compute explicitly, the estimator $\widehat{p}^\star$ proposed in Remark~\ref{remark:phat} is consistent for $\pstar$.
Recall that under the sequential treatment assignment,
\begin{align*}
p_i \in \sigmafield 
\quad \text{and} \quad 
\Prob(K_i = 1 \mid \sigmafield) = p_i,
\end{align*}
where $\sigmafield = \sigma(K_1, Y_1, \ldots, K_{i-1}, Y_{i-1})$ denotes the sigma-field generated by the past treatment assignments and outcome history. Hence, given the past history, the inclusion probabilities $p_i$ are known to the experimenter. Under strong design stability, we have $p_i \stackrel{\mathbb{P}}{\rightarrow} \pstar$. Consequently, by Lemma~\ref{lemma:cesaro_inprob} 
\begin{align}
\widehat{p}^\star 
= \frac{1}{N}\sum_{i=1}^N p_i 
\stackrel{\mathbb{P}}{\rightarrow} \pstar,
\label{eqn:pwidehatstarconsistency}
\end{align}
establishing the consistency of $\widehat{p}^\star$. Hence, $\widehat{p}^\star$ can be substituted into $\widehat{\vipwstrong}$, which would still consistently estimate $\left(m_{0}\sqrt{\tfrac{\pstar}{1-\pstar}} + m_{1}\sqrt{\tfrac{1-\pstar}{\pstar}}\right)^2$. The remaining arguments then follow analogously to the case with known $\pstar$.

%\textcolor{red}{Do we keep this? This is not part of Theorem 2. TDcomment: I commented this part out.}

%Consequently,
%\begin{align*}
%&\lim_{N\to\infty} \Prob\!\left(
%\ate \in
%\left[
%\snipw - z_{1-\alpha/2} \sqrt{\frac{\widehat{\vipwstronghat}}{N}},
%\ \snipw + z_{1-\alpha/2} \sqrt{\frac{\widehat{\vipwstronghat}}{N}}
%\right]
%\right) \\
%&\quad= 2\Phi\!\left(z_{1-\alpha/2} \sqrt{\frac{\left(m_{0}\sqrt{\frac{\pstar}{1-\pstar}} + m_{1}\sqrt{\frac{1-\pstar}{\pstar}}\right)^2}{\vipw}}\right) - 1 \\
%&\quad\geq 2\Phi(z_{1-\alpha/2}) - 1 \quad \text{(by inequality~\eqref{eqn:inequality1})}\\
%&\quad= 1-\alpha,
%\end{align*} proving our claim. Exact coverage is attained when inequality~\eqref{eqn:inequality1} holds with equality, which corresponds to equality in~\eqref{eqn:inequality2}, that is, the equality case of the Cauchy-Schwarz inequality, yielding $\tfrac{\Yione}{\Yizero}= c.$

%%%%%% PROOF OF THEOREM 3 %%%%%%%%%%%%%%%

\subsection{Proof of Theorem~\ref{corr:vipw_est_weak} \\ (Variance estimation of the IPW estimator under weak design stability)}
\label{sec:Proof-of-Corr:vipw_est_weak}
We first consider the case where~$\ponestar, \ptwostar$ and $\widetilde{p}$ are known.

\vspace{10pt} 
\noindent\textbf{\underline{Consistency of $\widetilde{m}_0^2$ and $\widetilde{m}_1^2$:}} We establish the consistency of $\widetilde{m}_1^2$; the proof for $\widetilde{m}_0^2$ follows analogously. 
Under the additional restriction~\eqref{eqn:extra_assumption} of the main paper,
\begin{align*}
\frac{1}{N}\sum_{i=1}^N p_i \stackrel{\mathbb{P}}{\rightarrow} \widetilde{p},
%\label{eqn:proof_m1_part2}
\end{align*}
Hence, under weak design stability, Assumption~\ref{assn:IPW} together with Lemma~\ref{lemma:aiibii} implies that
\begin{align}
\frac{1}{N}\sum_{i=1}^N K_i \Yione^2 \stackrel{\mathbb{P}}{\rightarrow} \widetilde{p}\,\moment_{1}.
\label{eqn:proof_m1_part1}
\end{align}
Since $\widetilde{p}$ is known, $\widetilde{m}_1^2 = \tfrac{1}{N\widetilde{p}} \sum_{i=1}^N K_i\Yione^2$ serves as a consistent estimator of $\moment_1$.
%By the same reasoning used in establishing the consistency of $\widehat{p}_1^\star$ and $\widehat{p}_2^\star$, $\widetilde{p}$ can be consistently estimated by $\frac{1}{N}\sum_{i=1}^N p_i$. Hence, by Assumption~\ref{assn:IPW}(a) and the continuous mapping theorem,
%\begin{align}
%\frac{N}{\sum_{i=1}^N p_i} \stackrel{\mathbb{P}}{\rightarrow} \frac{1}{\widetilde{p}}.
%\label{eqn:onebyptilde}
%\end{align}
%Applying Slutsky's theorem to~\eqref{eqn:proof_m1_part1} and~\eqref{eqn:onebyptilde} gives
%\begin{align*}
%\widehat{m}_1^2 
%= \frac{1}{\sum_{i=1}^N p_i}\sum_{i=1}^N K_i Y_i^2 
%\stackrel{\mathbb{P}}{\rightarrow} \moment_1,
%\label{eqn:proof_m1_final}
%\end{align*}
%which establishes the consistency of $\widehat{m}_1^2$.

\vspace{10pt} 
\noindent\textbf{\underline{Bounding the cross-moment term:}} We have already established in~\eqref{eqn:inequality2} that $|\crossmoment| \leq m_{0} m_{1}.$
%\begin{align*}
%|\crossmoment| \leq m_{0} m_{1}.
%\end{align*}
Hence, under weak design stability,
\begin{align}
\vipwweak
= \moment_{0}\frac{\ptwostar}{1-\ptwostar} 
  + \moment_{1}\frac{1-\ponestar}{\ponestar} 
  + 2\crossmoment
\leq \moment_{0}\frac{\ptwostar}{1-\ptwostar} 
  + \moment_{1}\frac{1-\ponestar}{\ponestar} 
  + 2 m_{0} m_{1}.
\label{eqn:inequality3}
\end{align}
By arguments analogous to those in the proof of Theorem~\ref{corr:vipw_est}, 
$\widehat{\vipwweakhat}$ consistently estimates 

$\left(\moment_{0}\tfrac{\ptwostar}{1-\ptwostar} 
+ \moment_{1}\tfrac{1-\ponestar}{\ponestar} 
+ 2 m_{0}m_{1}\right)$ 
under weak design stability. 
Analogous arguments as in the proof of Theorem~\ref{corr:vipw_est} 
together with inequality~\eqref{eqn:inequality3}, 
yields the desired result, with consistency attained when the potential outcomes are additive on log scale, i.e., satisfy~\eqref{eqn:log-additive-model}. This completes the proof of the theorem.

\vspace{10pt}
%\textcolor{red}{Again, not part of main theorem. TD: Let's keep in the following manner:} 

%Note that if $\ponestar, \ptwostar$, and $\widetilde{p}$ are unknown or difficult to compute explicitly, the consistent estimators $\widehat{p}_1^\star, \widehat{p}_2^\star$, and $\widehat{\widetilde{p}}$, as proposed in Remark~\ref{remark:ipw_weak_remark}, may be substituted into $\widehat{\vipwweak}$. This substitution would still yield a consistent estimator of $\bigl(\moment_{0}\tfrac{\ptwostar}{1-\ptwostar} + \moment_{1}\tfrac{1-\ponestar}{\ponestar} + 2m_{0}m_{1}\bigr)$, and the remaining arguments follow analogously.

\noindent\textbf{\underline{Consistency of $\widehat{p}_1^\star$, $\widehat{p}_2^\star$, and $\widetildepest$ 
(proposed in Remark~\ref{remark:ipw_weak_remark}) under unknown $\ponestar$, $\ptwostar$, and $\widetilde{p}$:
}} 

\vspace{3pt}
\noindent Under weak design stability,
\begin{align*}
\frac{1}{N} \sum_{i = 1}^N \frac{1}{p_i} \stackrel{\mathbb{P}}{\rightarrow} \frac{1}{\ponestar}
\qquad \text{and} \qquad
\frac{1}{N} \sum_{i = 1}^N \frac{1}{1 - p_i} \stackrel{\mathbb{P}}{\rightarrow} \frac{1}{1 - \ptwostar}.
\end{align*}
Moreover, since $p_i \in \sigmafield$, the current inclusion probability is known to the experimenter given the past assignment history and potential outcomes. Hence, $\tfrac{1}{N}\sum_{i=1}^N\tfrac{1}{p_i}$ and $\tfrac{1}{N}\sum_{i=1}^N\tfrac{1}{1-p_i}$ can be viewed as consistent estimators of $\tfrac{1}{\ponestar}$ and $\tfrac{1}{1-\ptwostar}$, respectively. Finally, Assumption~\ref{assn:IPW}(a) together with the continuous mapping theorem implies that $\widehat{p}_1^\star$ and $\widehat{p}_2^\star$ as in Remark~\ref{remark:ipw_weak_remark} consistently estimate $\ponestar$ and $\ptwostar$, respectively. By similar reasoning, $\widetilde{p}$ can be consistently estimated by $\widetildepest = \tfrac{1}{N}\sum_{i=1}^N p_i$, under the additional restriction~\eqref{eqn:extra_assumption}.

%%%%%% THEOREM 4 %%%%%%%%%%%%%%%%%%%%%%%%

\subsection{Proof of Theorem~\ref{Thm:aipw} (CLT for the AIPW estimator)}
\label{sec:Proof-of-Thm:aipw}

%\begin{itemize}
% \item \textcolor{red}{@Saikat: Please fix all broken references.}
% %
% \item \textcolor{red}{@Saikat: Use the decomposition for $\sqrt{N}(\snaipw - \ate)$ directly.} 
% %
% \item \textcolor{red}{@Saikat: Modify the proofs with the new definition of stability.}  
% %
% \item \textcolor{red}{@Saikat: Proofs still has $c_1$, $\Yvar_{1}$ etc. Please fix these.}
% \end{itemize}

The proof of this theorem differs from that of Theorem~\ref{Thm:ipw}, as it is not straightforward to apply the martingale central limit theorem~\cite[Chapter 3]{Hall1980-ny} directly. Instead, we first analyze a proxy estimator $\tnaipw$ defined  as
\begin{align*}
\tnaipw = \frac{1}{N} \sum_{i=1}^{N} \left[\left\{  
    \frac{K_i\left(\Yione - \overline{Y}_{i-1}(1)\right) }{p_i} + \overline{Y}_{i-1}(1) 
\right\}
- \left\{
    \frac{(1 - K_i)\left(\Yizero - \overline{Y}_{i-1}(0)\right) }{1 - p_i} + \overline{Y}_{i-1}(0) 
\right\} \right],
\end{align*}
where $\overline{Y}_{i-1}(l) = \tfrac{1}{i-1} \sum_{j<i} Y_j(l), \text{ for } l \in \{0, 1\}.$
The analytically tractable estimator $\tnaipw$, though not directly estimable from the observed data, is constructed to closely mimic the behavior of the actual estimator $\snaipw$. A central limit theorem for $\tnaipw$ can be established using the martingale central limit theorem~\cite[Chapter 3]{Hall1980-ny}. The crucial step then is to show that the difference between $\snaipw$ and $\tnaipw$ is asymptotically negligible, in the sense that
\begin{align}
\frac{\E\left[\snaipw-\tnaipw\right]^2}{\Var\left[\tnaipw\right]} \to 0 \text{ as } N \to \infty.
\label{eqn:hajek}   
\end{align}
This allows us to invoke H\'{a}jek's Lemma (see Lemma~\ref{lemma:hajek_proof}), which implies that the asymptotic distribution of $\snaipw$ matches that of $\tnaipw$, thereby establishing the central limit theorem for $\snaipw$. 

We start by observing that $\tnaipw$ after proper centering and scaling can be written as a sum of martingale difference sequence. 
\begin{align*}
\sqrt{N}\left(\tnaipw - \ate\right)
%&= \sum_{i=1}^N \frac{1}{\sqrt{N}} \left\{ \left( \frac{K_i}{p_i} - 1 \right)\left(\Yione - \overline{Y}_{i-1}(1)\right)  - \left( \frac{1 - K_i}{1 - p_i} - 1 \right)\left(\Yizero - \overline{Y}_{i-1}(0)\right)  \right\} \\
&= \sum_{i=1}^N \frac{K_i - p_i}{\sqrt{N}}\left( \frac{\Yizero - \overline{Y}_{i-1}(0)}{1 - p_i} + \frac{\Yione - \overline{Y}_{i-1}(1)}{p_i}  \right) \;=\; \sum_{i=1}^N \ze_i,
\end{align*}
where $\ze_i = \tfrac{K_i - p_i}{\sqrt{N}}\left( \tfrac{\Yizero - \overline{Y}_{i-1}(0)}{1 - p_i} + \tfrac{\Yione - \overline{Y}_{i-1}(1)}{p_i}  \right).$
%\begin{align*}
%    \ze_i = \frac{K_i - p_i}{\sqrt{N}}\left( \frac{\Yizero - \overline{Y}_{i-1}(0)}{1 - p_i} + \frac{\Yione - \overline{Y}_{i-1}(1)}{p_i}  \right).
%\end{align*}
Now,
\begin{align*}
\E\left[\ze_i \,\middle|\, \sigmafield\right] 
&= \frac{1}{\sqrt{N}}\left(\frac{\Yizero - \overline{Y}_{i-1}(0)}{1 - p_i} + \frac{\Yione - \overline{Y}_{i-1}(1)}{p_i}\right)
\E\left[ K_i - p_i \,\middle|\, \sigmafield \right]
= 0,
\end{align*}
implying $\{\ze_i\}_{i=1}^N$ are terms of a martingale difference sequence and that $\tnaipw$ is an unbiased estimator for $\ate$. The  total conditional variance of $\{ \ze_i  \}_{i \geq 1}$ is given by 
\begin{align*}
\sum_{i=1}^N \E\left[\ze_i^2 \,\middle|\, \sigmafield\right] 
&= \sum_{i=1}^N \frac{1}{N} \left(\frac{\Yizero - \overline{Y}_{i-1}(0)}{1 - p_i} + \frac{\Yione - \overline{Y}_{i-1}(1)}{p_i}\right)^2\E\left[(K_i-p_i)^2\,\middle|\,\sigmafield\right] \\ 
%&=  \sum_{i=1}^N \frac{1}{N} \left(\frac{\Yizero - \overline{Y}_{i-1}(0)}{1 - p_i} + \frac{\Yione - \overline{Y}_{i-1}(1)}{p_i}\right)^2p_i(1-p_i) \\ 
&= \frac{1}{N}\sum_{i=1}^N \frac{p_i}{1-p_i}\left(\Yizero - \overline{Y}_{i-1}(0)\right)^2 + \frac{1}{N}\sum_{i=1}^N \frac{1-p_i}{p_i}\left(\Yione - \overline{Y}_{i-1}(1)\right)^2 \\  &+ \frac{2}{N}\sum_{i=1}^N\left(\Yizero - \overline{Y}_{i-1}(0)\right)\left(\Yione - \overline{Y}_{i-1}(1)\right).
\end{align*}
Next, we verify that the total conditional variance converges in probability to a constant and that the Lindeberg condition holds.

\vspace{10pt}
\noindent 
\underline{\textbf{For Strongly Stable Design:}} Under strong design stability and Assumption~\ref{assn:AIPW}(a), the continuous mapping theorem implies that $\tfrac{p_i}{1-p_i} \stackrel{\mathbb{P}}{\rightarrow} \tfrac{\pstar}{1-\pstar}.$
%\begin{align*}
%\frac{p_i}{1-p_i} \stackrel{\mathbb{P}}{\rightarrow} \frac{\pstar}{1-\pstar}.
%\end{align*} 
Therefore, Lemma~\ref{lemma:derived_from_ass_aipw_1}, together with Lemma~\ref{lemma:aibi} and Assumption~\ref{assn:AIPW}(a)–(b), implies
\begin{align*}
\frac{1}{N}\sum_{i=1}^N \frac{p_i}{1-p_i}\left(\Yizero - \overline{Y}_{i-1}(0)\right)^2 \stackrel{\mathbb{P}}{\rightarrow} \Yvar_{0}\frac{\pstar}{1-\pstar}.
\end{align*}Similar arguments yield, 
\begin{align*}
\frac{1}{N}\sum_{i=1}^N \frac{1-p_i}{p_i}\left(\Yione - \overline{Y}_{i-1}(1)\right)^2 &\stackrel{\mathbb{P}}{\rightarrow} \Yvar_{1}\frac{1-\pstar}{\pstar}, \\ \text{and} \qquad
 \frac{2}{N}\sum_{i=1}^N \left(\Yizero - \overline{Y}_{i-1}(0)\right)\left(\Yione - \overline{Y}_{i-1}(1)\right) &\xrightarrow{} 2\Ycov.
\end{align*} Overall, 
\begin{align*}
\sum_{i=1}^N \E\left[\ze_i^2 \,\middle|\, \sigmafield\right] \stackrel{\mathbb{P}}{\rightarrow} \Yvar_{0}\frac{\pstar}{1-\pstar} + \Yvar_{1}\frac{1-\pstar}{\pstar} + 2\Ycov.
\end{align*}

\vspace{10pt}
\noindent 
\underline{\textbf{For Weakly Stable Design:}} By arguments analogous to the weakly stable case in the proof of Theorem~\ref{Thm:ipw}, it follows that 
\begin{align*}
\sum_{i=1}^N \E\left[\ze_i^2 \,\middle|\, \sigmafield\right] \stackrel{\mathbb{P}}{\rightarrow} \Yvar_{0}\frac{\ptwostar}{1-\ptwostar} + \Yvar_{1}\frac{1-\ponestar}{\ponestar} + 2\Ycov.
\end{align*} Combining the two cases, the asymptotic variance of the AIPW estimator is
\begin{align}
\vaipw =
\begin{cases}
\vaipwstrong = \Yvar_{0}\dfrac{\pstar}{1-\pstar} + \Yvar_{1}\dfrac{1-\pstar}{\pstar} + 2\Ycov & \text{under strong design stability}, \\[1.2em]
\vaipwweak = \Yvar_{0}\dfrac{\ptwostar}{1-\ptwostar} + \Yvar_{1}\dfrac{1-\ponestar}{\ponestar} + 2\Ycov & \text{under weak design stability}.
\end{cases}
\label{eqn:vaipw_cases_proof}
\end{align}

\vspace{10pt}
Next, note that the uniform boundedness of $p_i$ and $Y_i$ imply that
\begin{align*}
|\ze_i|  
%= \left|\frac{K_i - p_i}{\sqrt{N}} 
%\left( \frac{\Yizero - \overline{Y}_{i-1}(0)}{1 - p_i} + \frac{\Yione - \overline{Y}_{i-1}(1)}{p_i}\right) \right| \\
= \frac{1}{\sqrt{N}}|K_i - p_i| \left|
\frac{\Yizero - \overline{Y}_{i-1}(0)}{1 - p_i} + \frac{\Yione - \overline{Y}_{i-1}(1)}{p_i}\right| 
\leq \frac{8M}{\sqrt{N}\delta}.
\end{align*} 
Fix $\varepsilon > 0.$ For any $N > \left(\tfrac{8M}{\delta\varepsilon}\right)^2,$ we have $\mathbf{1}_{\{|\ze_i| > \varepsilon\}} = 0$ a.s. Consequently, for such $N,$
\begin{align*}
    \sum_{i=1}^N \E{\left[\ze_i^2\mathbf{1}_{\{|\ze_i| > \varepsilon\}} \,\middle|\, \sigmafield\right]} = 0,
\end{align*}
%\begin{align*}
%\mathbf{1}_{\{|\xi_i| > \varepsilon\}} < \mathbf{1}_{\left\{\frac{4M}{\sqrt{N}\delta} > \varepsilon\right\}} = 0.   
%\end{align*}
and therefore
\begin{align*}
    \lim_{N \to \infty} \sum_{i=1}^N \E{\left[\ze_i^2\mathbf{1}_{\{|\ze_i| > \varepsilon\}} \,\middle|\, \sigmafield\right]} = 0,
\end{align*}
which verifies the Lindeberg condition. Putting together the pieces and applying the martingale central limit theorem \cite[Theorem 3]{Hall1980-ny} yields 
\begin{align}
\sqrt{N}\left(\tnaipw - \ate\right) \xrightarrow{d} \mathcal{N}\left(0, \vaipw\right). \label{clt_tnaipw}
\end{align}
It now remains to verify the condition \eqref{eqn:hajek}. 

\vspace{10pt}
\noindent 
\underline{\textbf{Verifying condition~\eqref{eqn:hajek}:}}
Observe that $\snaipw-\tnaipw = \sum_{i = 1}^N \Delta_i,$
%
%\begin{align*}
%\snaipw-\tnaipw = \sum_{i = 1}^N \Delta_i 
%\end{align*}
%
where
\begin{align*}
    \Delta_i = \frac{K_i-p_i}{N} \left( \frac{\widehat{Y}_{i-1}(1)-\overline{Y}_{i-1}(1)}{p_i} + \frac{\widehat{Y}_{i-1}(0)-\overline{Y}_{i-1}(0)}{1-p_i} \right).
\end{align*} It is easy to verify that $\{ \Delta_i \}_{i \geq 1}$ is a martingale difference with respect to filtration $\{\sigmafield\}_{i \geq 1}$, and hence 
\begin{align}
\E\left[\left(\snaipw-\tnaipw\right)^2\right]
&= \sum_{i=1}^N \E\big[\Delta_i^2\big]. \label{eq:ms_diff}
\end{align}  
Using boundedness of $p_i$,
\begin{align*}
\E\left[\Delta_i^2\right] = \E\left[\E\left[\Delta_i^2 \,\middle|\, \sigmafield\right]\right]
&= \frac{1}{N^2} \E\!\left[ p_i(1-p_i) 
\left( \frac{\widehat{Y}_{i-1}(1)-\overline{Y}_{i-1}(1)}{p_i} 
      + \frac{\widehat{Y}_{i-1}(0)-\overline{Y}_{i-1}(0)}{1-p_i} \right)^{\!2} \right] \\
&\le \frac{(1-\delta)^2}{N^2} \E\!\left[ A_i^2 + 2 A_i B_i + B_i^2 \right],
\end{align*}
where
\begin{align*}
    A_i = \frac{\widehat{Y}_{i-1}(1)-\overline{Y}_{i-1}(1)}{p_i} \qquad \text{and} \qquad  B_i = \frac{\widehat{Y}_{i-1}(0)-\overline{Y}_{i-1}(0)}{1-p_i}.
\end{align*}
Since $\E\left[\frac{Y_j(1)(K_j-p_j)}{p_j}\big|\mathcal{F}_{j-1}\right]=0, A_i$ can be expressed as sum of a martingale difference sequence. Hence, by Assumption~\ref{assn:AIPW}(a)-(b),
\begin{align}
\E\left[A_i^2\right] 
= \frac{1}{(i-1)^2} \sum_{j<i} \E\!\left[ \frac{Y_j(1)^2 (K_j-p_j)^2}{p_i^2p_j^2} \right] 
\le \frac{M^2(1-\delta)^2}{(i-1)\delta^4}.\label{eq:Ai_bound}
\end{align}
Similarly, 
\begin{align}
\E\left[B_i^2\right] \le \frac{M^2(1-\delta)^2}{(i-1)\delta^4}, \quad \text{and} \quad \left|\E\left[A_i B_i\right]\right| \le \sqrt{\E\left[A_i^2\right] \E\left[B_i^2\right]} \le \frac{M^2(1-\delta)^2}{(i-1)\delta^4}.
\label{eq:Bi_bound}
\end{align}
%By Cauchy-Schwarz inequality,
%\begin{align}
%\left|\E\left[A_i B_i\right]\right| \le \sqrt{\E\left[A_i^2\right] \E\left[B_i^2\right]} \le \frac{M^2(1-\delta)^2}{(i-1)\delta^4}. \label{eq:cross_bound}
%\end{align}
Combining \eqref{eq:ms_diff}-\eqref{eq:Bi_bound}, we have
\begin{align}
\E\left[\left(\snaipw-\tnaipw\right)^2\right]
\le \frac{(1-\delta)^2}{N^2}\sum_{i<N}\frac{4M^2(1-\delta)^2}{(i-1)\delta^4} = \mathcal{O}\!\left( \frac{\log N}{N^2} \right). \label{eq:diff_final_bound}
\end{align}
Assumption~\ref{assn:AIPW}(a)-(b) ensures that 
%\begin{align*}
%\left|\Ybar_N(1)\right|, \; \left|\overline{Y}_{i-1}(1)\right| \le M, \qquad
%\frac{p_i}{1-p_i} \le \frac{1-\delta}{\delta}, \qquad
%\frac{1-p_i}{p_i} \le \frac{\delta}{1-\delta}.
%\end{align*}
%Hence, we conclude that
\begin{align}
\Var\left[\tnaipw\right] 
&= \E\left[ \frac{1}{N^2} \sum_{i=1}^N 
 \left\{ \frac{p_i}{1-p_i} \left(\Yizero-\overline{Y}_{i-1}(0)\right)^2
       + \frac{1-p_i}{p_i} \left(\Yione-\overline{Y}_{i-1}(1)\right)^2 \right. \right. \nonumber \\
&\quad\quad\quad\quad\quad\left.\left. +\, 2 \left(\Yizero-\overline{Y}_{i-1}(0)\right) \left(\Yione-\overline{Y}_{i-1}(1)\right) \right\} \right] \nonumber \\
&\le \frac{1}{N^2} \sum_{i=1}^N \left( \frac{4M^2(1-\delta)}{\delta} + \frac{4M^2\delta}{1-\delta} + 8M^2 \right) = \mathcal{O}\!\left( \frac{1}{N} \right).
\label{eq:var_tnaipw}
\end{align}
%The second line above follows via invoking  Assumption~\ref{assn:AIPW}(a)-(b).
Combining the bounds in~\eqref{eq:diff_final_bound} and~\eqref{eq:var_tnaipw}, we conclude that
\begin{align*}
\frac{\E\left[\left(\snaipw-\tnaipw\right)^2\right]}{\Var\left[\tnaipw\right]}
= \mathcal{O}\!\left( \frac{\log N}{N} \right) \to 0 \quad \text{as } N\to\infty,   
\end{align*}
which completes the proof of~\eqref{eqn:hajek}. Hence, it follows that $\sqrt{N}\left(\snaipw - \ate\right) \xrightarrow{d} \mathcal{N}\left(0, \vaipw\right),$
%\begin{align}
%\sqrt{N}\left(\snaipw - \ate\right) \xrightarrow{d} \mathcal{N}\left(0, \vaipw\right),   
%\end{align}
with asymptotic variance $\vaipw$ specified in~\eqref{eqn:vaipw_cases_proof}. %\textcolor{red}{Is the proof for verifying (50) enough? TDcomment: Let Koulik take a look.}
%  Hence the central limit theorem for $\snaipw$ follows,
% \begin{align}
% \label{eqn:cltsnaipw}
% \sqrt{N}\left(\snaipw - \ate\right) \xrightarrow{d} \mathcal{N}\left(0, \vaipw\right).
% \end{align}

\subsection{Proof of Theorem~\ref{corr:vaipw_est} \\ (Variance estimation of the AIPW estimator under strong design stability)}
\label{sec:Proof-of-Corr:vaipw_est}

We begin by considering the case in which $\pstar$ is known.

\medskip
\noindent\textbf{\underline{Consistency of $\widehat{\sigma}^2_{1}$ \& $\widehat{\sigma}^2_{0}$:}} Recalling from the main paper, 
\begin{align*}
\widehat{\sigma}^2_1 
&= \frac{1}{\max\{N_1,1\}} \sum_{i=1}^N K_i \left(\Yione - \widehat{Y}_{i-1}(1)\right)^2,
\end{align*}
where $N_1 = \sum_{i=1}^N K_i.$ Set $\widehat{Y}_1(1) = 0$ and for $i \geq 2$,
\begin{align*}
\widehat{Y}_{i-1}(1) 
&= \frac{1}{i-1} \sum_{j=1}^{i-1} \frac{K_j Y_j(1)}{p_j}.
\end{align*}
Under strong design stability, Lemma~\ref{lemma:bigintosmall} and Assumption~\ref{assn:AIPW}(a)–(b), together with Lemma~\ref{lemma:aibi}, imply
\begin{align*}
\frac{1}{N} \sum_{i=1}^N 
  p_i \left(\Yione - \widehat{Y}_{i-1}(1)\right)^2 
\stackrel{\mathbb{P}}{\rightarrow} \pstar \Yvar_{1}.
\end{align*}
Since we have already established in~\eqref{eqn:maxn1} that, under strong design stability, $\tfrac{\max\{N_1,1\}}{N} \stackrel{\mathbb{P}}{\rightarrow} \pstar$, Slutsky’s theorem implies
\begin{align*}
\widehat{\sigma}^2_1 \stackrel{\mathbb{P}}{\rightarrow} \Yvar_{1}.   
\end{align*} The proof for $\widehat{\sigma}^2_0$ is analogous. %\textcolor{red}{Maybe break this proof into parts. TDcomment: I don't have a preference here.}

\vspace{10pt}
\noindent\textbf{\underline{Bounding the covariance term:}} By the Cauchy-Schwarz inequality,
\begin{align}
\left| 
  \frac{1}{N} \sum_{i=1}^N 
  \left(\Yizero - \overline{Y}_{i-1}(0)\right)
  \left(\Yione - \overline{Y}_{i-1}(1)\right) 
\right|
&\le 
\left( 
  \frac{1}{N} \sum_{i=1}^N 
  \left(\Yizero - \overline{Y}_{i-1}(0)\right)^2 
\right)^{1/2}
\nonumber\\
&\quad\times
\left( 
  \frac{1}{N} \sum_{i=1}^N 
  \left(\Yione - \overline{Y}_{i-1}(1)\right)^2 
\right)^{1/2}.
\label{boundcs}
\end{align}
Taking limits as $N \to \infty$ yields $|\Ycov| \le \sigma_0 \sigma_1$, and hence
\begin{align}
\vaipwstrong
= \Yvar_{0}\frac{\pstar}{1-\pstar} 
+ \Yvar_{1}\frac{1-\pstar}{\pstar} 
+ 2\Ycov
\le 
\left(\sigma_0\sqrt{\frac{\pstar}{1-\pstar}} 
+ \sigma_1\sqrt{\frac{1-\pstar}{\pstar}}\right)^2.
\label{eqn:inequality4}
\end{align}
Using the consistency results $\widehat{\sigma}_0^2 \stackrel{\mathbb{P}}{\rightarrow} \Yvar_0$, 
and $\widehat{\sigma}_1^2 \stackrel{\mathbb{P}}{\rightarrow} \Yvar_1$, 
together with nonnegativity of $\widehat{\sigma}_0$ and $\widehat{\sigma}_1$ 
and the continuous mapping theorem, 
it follows that $\widehat{\sigma}_j \stackrel{\mathbb{P}}{\rightarrow} \sigma_j$ for $j \in \{0,1\}$. 
Consequently, $\widehat{\vaipwstronghat}$, as defined in Section~\ref{sec:aipw_results} of the main paper, 
consistently estimates 
$\left(\sigma_{0}\sqrt{\frac{\pstar}{1-\pstar}} + \sigma_{1}\sqrt{\frac{1-\pstar}{\pstar}}\right)^{\!2}$. Therefore, $\widehat{\vaipwstrong}$ estimates $\vaipwstrong$ conservatively.

For $\widehat{\vaipwstrong}$ to be consistent for $\vaipwstrong$, equality must hold in inequality~\eqref{eqn:inequality4}. 
This corresponds to the equality case of the Cauchy–Schwarz inequality in~\eqref{boundcs}, which implies that 
\begin{align*}
\frac{\Yione - \overline{Y}_{i-1}(1)}{\Yizero - \overline{Y}_{i-1}(0)} = c \quad \text{for all } i,
\end{align*}
for some constant $c \in \mathbb{R}$. Consequently, the potential outcomes satisfy generalized treatment effect homogeneity (Definition~\ref{definition:additive-model}).  
This completes the proof of the theorem.

\medskip

If $\pstar$ is unknown or difficult to compute, the consistent estimator $\widehat{p}^\star$ defined in Remark~\ref{remark:phat} may be used; see Remark~\ref{rem:APIWunknown} for further details.

%equality holds in~\eqref{eqn:coverageineq} and hence the asymptotic coverage of the proposed confidence interval is exactly equal to the theoretical coverage.
%\vspace{10pt}
%\textcolor{red}{Not part of the theorem statement. TDcomment: Let's remove}

%\begin{align}
%&\lim_{N\to\infty} \Prob\!\left(
%\ate \in
%\left[
%\snaipw - z_{1-\alpha/2} \sqrt{\frac{\widehat{\vaipwstronghat}}{N}},
%\ \snaipw + z_{1-\alpha/2} \sqrt{\frac{\widehat{\vaipwstronghat}}{N}}
%\right]
%\right) \\
%&\quad= 2\Phi\!\left(z_{1-\alpha/2} \sqrt{\frac{\left(\sigma_{0}\sqrt{\frac{\pstar}{1-\pstar}} + \sigma_{1}\sqrt{\frac{1-\pstar}{\pstar}}\right)^2}{\vaipw}}\right) - 1 \\
%&\quad\geq 2\Phi(z_{1-\alpha/2}) - 1 \label{eqn:coverageineq} \\
%&\quad= 1-\alpha,
%\end{align}
%where the inequality follows from~\eqref{eqn:inequality4}.

%\newpage
\subsection{Proof of Theorem~\ref{corr:vaipw_est_weak} \\ (Variance estimation of the AIPW estimator under weak design stability)}
\label{sec:Proof-of-Corr:vaipw_est_weak}

We first consider the case in which $\ponestar$, $\ptwostar$, and $\widetilde{p}$ are known.

\noindent\textbf{\underline{Consistency of $\widetilde{\sigma}^2_{1}$ \& $\widetilde{\sigma}^2_{0}$:}} The argument follows along the same lines as the proof of Theorem~\ref{corr:vipw_est_weak}. 
We first establish the consistency of $\widetilde{\sigma}_1^2$; the proof for $\widetilde{\sigma}_0^2$ is analogous. 
Assuming the additional condition~\eqref{eqn:extra_assumption}, and invoking Lemma~\ref{lemma:bigintosmall}, weak design stability, and Lemma~\ref{lemma:aiibii}, we obtain
\begin{align}
\frac{1}{N}\sum_{i=1}^N K_i \left(\Yione - \widehat{Y}_{i-1}(1)\right)^2 
\stackrel{\mathbb{P}}{\rightarrow} \widetilde{p}\,\Yvar_{1}.
\label{eqn:proof_sigma1_part1}
\end{align}
%As in the proof of Corollary~\ref{corr:vipw_est_weak}, 
%$\frac{1}{\widetilde{p}}$ can be consistently estimated by $\frac{N}{\sum_{i=1}^N p_i}$. 
%Applying Slutsky’s theorem to~\eqref{eqn:proof_sigma1_part1} yields
Therefore, under known $\widetilde{p},$
\begin{align*}
\widetilde{\sigma}_1^2 
= \frac{1}{N\widetilde{p}}\sum_{i=1}^N K_i \left(\Yione - \widehat{Y}_{i-1}(1)\right)^2 
\stackrel{\mathbb{P}}{\rightarrow} \Yvar_{1},
\end{align*}
establishing the consistency of $\widetilde{\sigma}_1^2.$

\vspace{10pt}
\noindent\textbf{\underline{Bounding the covariance term:}} As established in the proof of Theorem~\ref{corr:vaipw_est}, by the Cauchy–Schwarz inequality, the cross-moment term satisfies 
$|\Ycov| \le \sigma_0\sigma_1$, and hence
\begin{align}
\vaipwweak 
= \Yvar_{0}\frac{\ptwostar}{1-\ptwostar} 
+ \Yvar_{1}\frac{1-\ponestar}{\ponestar} 
+ 2\Ycov
\le 
\Yvar_{0}\frac{\ptwostar}{1-\ptwostar} 
+ \Yvar_{1}\frac{1-\ponestar}{\ponestar} 
+ 2\sigma_0\sigma_1.
\label{eqn:inequality5}
\end{align}
By arguments analogous to those in the proof of Theorem~\ref{corr:vaipw_est}, 
$\widehat{\vaipwweakhat}$ consistently estimates

$\left(\Yvar_{0}\frac{\ptwostar}{1-\ptwostar} 
+ \Yvar_{1}\frac{1-\ponestar}{\ponestar} 
+ 2\sigma_0\sigma_1\right),$ and hence conservatively estimates $\vaipwweak.$$\widehat{\vaipwweak}$ is consistent for $\vaipwweak$ whenever equality holds in~\eqref{eqn:inequality5}, i.e., when the potential outcomes satisfy generalized treatment effect homogeneity (Definition~\ref{definition:additive-model}).

If $\ponestar, \ptwostar$, and $\widetilde{p}$ are unknown or difficult to compute, the consistent estimators $\widehat{p}_1^\star, \widehat{p}_2^\star$, and $\widetildepest$, defined in Remark~\ref{remark:ipw_weak_remark}, may be used; see Remark~\ref{rem:APIWunknown} for further details.

%The proof is analogous to that of Corollary~\ref{corr:vaipw_est}.

\section{Proofs of Lemmas}
In this section, we present the proofs of our main Lemmas~\ref{lemma:Wei_stability}-\ref{lemma:efron_as_half} along with several additional lemmas.

\subsection{Proof of Lemma~\ref{lemma:Wei_stability}}
\label{sec:Proof-of-Lemma:Wei_stability}
Under Wei’s adaptive coin design~\cite{Wei1978-gd}, the $i$th unit is assigned to treatment with probability  
\begin{align}
\label{eqn:weipi}
   p_i = f\!\left(R_{i-1}\right), 
\end{align}
where $R_{i-1} = \tfrac{D_{i-1}}{i-1}$ denotes the normalized treatment--control imbalance after $(i-1)$ assignments, and $f:[-1,1]\to[0,1]$ is a non-increasing function satisfying $f(0)=\tfrac{1}{2}$ and continuous at zero. To ensure that the variance estimators $\snipw$ and $\snaipw$ are well defined, it is necessary that the assignment probabilities be bounded away from $0$ and $1$.  
If $f$ does not automatically satisfy this condition, we consider its truncated version  
\begin{align}
\label{eqn:truncated-p-wei}
p_i = \min\!\left\{\max\!\left\{f(R_{i-1}),\, \delta\right\},\, 1-\delta\right\}, 
\qquad \delta \in (0, \tfrac{1}{2}],
\end{align}
which guarantees $p_i \in [\delta,\, 1-\delta]$ for all $i$. By Theorem~1 of \cite{Wei1978-gd}, the assignment probabilities in~\eqref{eqn:weipi} satisfy  
\begin{align*}
p_i \stackrel{\mathbb{P}}{\rightarrow} \tfrac{1}{2}.
\end{align*}
Since the truncation in~\eqref{eqn:truncated-p-wei} is a continuous transformation, the continuous mapping theorem implies that the truncated inclusion probabilities also converge in probability to $\tfrac{1}{2}$.  
Therefore, Wei’s adaptive coin design satisfies strong design stability with limiting inclusion probability $\pstar = \tfrac{1}{2}$.

%Consequently, the central limit theorems for $\snipw$ and $\snaipw$ follow directly from Theorems~\ref{Thm:ipw} and~\ref{Thm:aipw}, respectively, 
%with asymptotic variances given by~\eqref{eqn:vipwwei} and~\eqref{eqn:vaipwwei}, 
%obtained by substituting $\pstar = \tfrac{1}{2}$ into~\eqref{eqn:vipw_cases} and~\eqref{eqn:vaipw_cases}.

\subsection{Proof of Lemma~\ref{lemma:efron_as_half}}
\label{sec:Proof-of-Lemma:efron_as_half}
We begin by showing that Efron’s biased coin design \cite{Efron1971-yh} satisfies weak stability. 
Suppose a total of $k$ units have been assigned to treatment or control. 
Let $m_k$ and $n_k$ denote, respectively, the number of units assigned to the treatment and control groups, so that $m_k + n_k = k$. 
The corresponding treatment–control imbalance after $k$ assignments is given by $D_k = m_k - n_k$.
Under Efron’s biased coin design ($\eta$) the probability of assigning the $(k+1)$th unit to treatment, denoted by $p_{k+1}$ is given by
\begin{align*}
p_{k+1} =
\begin{cases}
\eta & \text{if } D_k < 0, \\
\frac{1}{2} & \text{if } D_k = 0, \\
1 - \eta & \text{if } D_k > 0.
\end{cases}
\end{align*}
Observe that $\{D_k\}_{k\geq1}$ is a Markov chain and the state space is $\mathbb{Z}.$ Since we can always move from $D_k = a$ to $D_{k+1} = (a-1) \text{ or } D_{k+1} = (a+1)$ in a step, the Markov chain is irreducible.  We begin by recalling Foster’s Theorem \cite{Foster1953-av}, which provides a condition for positive recurrence in Markov chains with a countable state space. 
\begin{theorem}[\cite{Foster1953-av}]
\label{thm:foster}
Consider an irreducible discrete-time Markov chain on a countable state space \( S \), 
with transition probability matrix \( P = (p_{i,j})_{i,j \in S} \), 
where \( p_{i,j} \) denotes the probability of transitioning from state \( i \) to state \( j \). The Markov chain is positive recurrent if and only if there exists a Lyapunov function \( V: S \to \mathbb{R} \), such that \( V(i) \geq 0 \) for all \( i \in S \), and

\begin{align*}
\sum_{j \in S} p_{i,j} V(j) &< \infty \quad \text{for } i \in F, \\
\sum_{j \in S} p_{i,j} V(j) &\leq V(i) - \varepsilon \quad \text{for all } i \notin F,
\end{align*}
for some finite set \( F \subset S \) and strictly positive constant \( \varepsilon > 0 \).
%wikipedia
\end{theorem}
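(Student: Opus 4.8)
The plan is to complete the argument sketched in the remark preceding the lemma: identify the stationary distribution of the imbalance chain $\{D_k\}_{k\ge 1}$ and then read off the three Ces\`aro limits from the ergodic theorem. I first dispose of the boundary case $\eta=\tfrac12$, where the design reduces to fair Bernoulli assignment, so every $p_i=\tfrac12$ deterministically; the three averages then equal $\tfrac12$, $2$, and $2$ exactly, which matches the stated limits since the rational expressions for $\ponestar$ and $\ptwostar$ both collapse to $\tfrac12$ at $\eta=\tfrac12$. For the remainder I assume $\eta\in(\tfrac12,1)$, which is precisely the regime in which the chain is positive recurrent.

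The first step is positive recurrence. I apply Foster's Theorem (Theorem~\ref{thm:foster}) with the Lyapunov function $V(a)=|a|$ and the finite set $F=\{0\}$. For $a\ge 1$ the chain moves to $a+1$ with probability $1-\eta$ and to $a-1$ with probability $\eta$, so the conditional drift is $(1-\eta)(|a|+1)+\eta(|a|-1)=|a|-(2\eta-1)$; by the symmetry $a\mapsto -a$ of the transition rule the identical bound holds for $a\le -1$. Thus with $\varepsilon=2\eta-1>0$ the drift condition $\sum_j p_{a,j}V(j)\le V(a)-\varepsilon$ holds off $F$, while at $a=0$ the one-step expectation of $V$ equals $1<\infty$. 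Combined with the irreducibility already established, Foster's Theorem delivers positive recurrence, hence the existence of a unique stationary distribution $\pi$.

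The second step is to compute $\pi$. Since $\{D_k\}$ is a nearest-neighbour (birth--death) chain it is reversible, so $\pi$ is pinned down by the detailed-balance equations $\pi(a)\,p_{a,a+1}=\pi(a+1)\,p_{a+1,a}$. For $a\ge 1$ these give $\pi(a+1)=\tfrac{1-\eta}{\eta}\,\pi(a)$, and the boundary relation at the origin gives $\pi(1)=\pi(0)/(2\eta)$; using $\pi(-a)=\pi(a)$ and normalizing the resulting geometric series (ratio $r=\tfrac{1-\eta}{\eta}<1$) yields closed forms whose only aggregate consequences I need are
\begin{align*}
\pi(D=0)=\frac{2\eta-1}{2\eta},\qquad \pi(D>0)=\pi(D<0)=\frac{1}{4\eta}.
\end{align*}
The third step passes to the limits. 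Because $p_i=f(D_{i-1})$ is a bounded function of the chain state, the mean ergodic theorem for positive-recurrent irreducible chains gives, for any bounded $g$, that $\tfrac1N\sum_{i=1}^N g(p_i)\to\E_\pi[g(f(D))]$ almost surely, and hence in probability; the single index shift and the initial transient contribute $o(1)$, and periodicity of the chain is irrelevant since only time averages appear. Evaluating with $g(x)=x$, $g(x)=1/x$, and $g(x)=1/(1-x)$ against the three atoms above yields $\E_\pi[f(D)]=\tfrac12$ (proving $\tfrac1N\sum p_i\xrightarrow{p}\tfrac12$) and, after putting the terms over the common denominator $4\eta^2(1-\eta)$, $\E_\pi[1/f(D)]=\tfrac{1-4\eta+12\eta^2-8\eta^3}{4\eta^2(1-\eta)}$; the symmetry $\pi(D>0)=\pi(D<0)$ then forces $\E_\pi[1/(1-f(D))]$ to equal the same value.

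The main obstacle is the bookkeeping in the last two steps: solving the detailed-balance recursion with its nonstandard boundary at $0$, and then algebraically reconciling the ergodic averages with the stated rational expressions. A direct computation shows $\ponestar=1-\ptwostar=\tfrac{4\eta^2(1-\eta)}{1-4\eta+12\eta^2-8\eta^3}$, so that $1/\ponestar$ and $1/(1-\ptwostar)$ coincide with the single ratio found above, which is exactly what the symmetry of the chain predicts. The conceptual content --- positive recurrence via a linear Lyapunov function and the ergodic theorem for time averages --- is routine; the genuine risk lies only in the algebra, in particular in verifying that both inverse-probability averages collapse to this common value.
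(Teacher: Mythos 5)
There is a genuine gap, and it is a fundamental one: you have not proved the statement in question at all. The statement is Foster's Theorem itself --- a general if-and-only-if criterion for positive recurrence of an irreducible countable-state Markov chain in terms of a Lyapunov function. Your proposal instead proves Lemma~\ref{lemma:efron_as_half} (weak stability of Efron's biased coin design), and in doing so it explicitly \emph{invokes} Theorem~\ref{thm:foster} as a black box (``I apply Foster's Theorem (Theorem~\ref{thm:foster}) with the Lyapunov function $V(a)=|a|$ and the finite set $F=\{0\}$''). As a proof of Foster's Theorem this is circular; as a response to the statement it is a category error, confusing an application of the criterion with the criterion itself. Note that the paper does not prove this theorem either --- it is quoted from the cited reference \cite{Foster1953-av} precisely so that it can be applied inside the proof of Lemma~\ref{lemma:efron_as_half} --- so the expected content here is the classical argument, not a verification of the Efron example.

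What a proof of the actual statement would require is entirely absent from your write-up. For sufficiency, one must show that the drift condition $\sum_{j} p_{i,j}V(j) \leq V(i) - \varepsilon$ off $F$ forces positive recurrence: the standard route is to observe that $V(X_{n\wedge \tau_F}) + \varepsilon (n\wedge \tau_F)$ is a nonnegative supermartingale (where $\tau_F$ is the hitting time of $F$), apply optional stopping to conclude $\E_i[\tau_F] \leq V(i)/\varepsilon < \infty$ for $i \notin F$, and then use the first condition together with finiteness of $F$ and irreducibility to bound the mean return time to a state in $F$, whence positive recurrence of the whole chain. For necessity, one must \emph{construct} a Lyapunov function from positive recurrence, e.g.\ $V(i) = \E_i[\tau_F]$ for a finite set $F$ containing a fixed state, verifying it satisfies the two displayed inequalities (with $\varepsilon = 1$) via the one-step decomposition of hitting times. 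None of these ideas --- the supermartingale/optional-stopping argument, the passage from hitting times of $F$ to return times, or the converse construction --- appears in your proposal. For what it is worth, your argument for Lemma~\ref{lemma:efron_as_half} is itself essentially sound and even streamlines the paper's computation by exploiting reversibility of the nearest-neighbour imbalance chain (detailed balance in place of solving the full balance equations, as the paper does in Section~\ref{sec:stat_distn}), but that is an answer to a different question.
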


We will now show that $\{D_k\}_{k\geq1}$ is positive recurrent using the above theorem. Consider the Lyapunov function $V(s) = |s| \text{ for } s \in S = \mathbb{Z}$, which is non-negative for all $s \in S,$ and take $F = \{0\}.$ For $i \in F \text{ i.e. } i=0,$ 
\begin{align*}
\sum_{j \in S}p_{0,j}V(j) = p_{0,-1}V(-1) + p_{0,1}V(1) = p_{0,-1} + p_{0,1} = 1 < \infty.
\end{align*}
Now we will consider the case $i \notin F \text{ i.e. } i \neq 0.$ If $i>0,$
\begin{align*}
\sum_{j \in S} p_{i,j}V(j) - V(i) = p_{i,{i+1}}V(i+1) + p_{i,{i-1}}V(i-1) - V(i) \\ = (1-\eta)(i+1) + \eta(i-1) - i = 1 - 2\eta < 0. 
\end{align*}
If $i<0,$\begin{align*} 
\sum_{j \in S} p_{i,j}V(j) - V(i) = p_{i,{i+1}}V(i+1) + p_{i,{i-1}}V(i-1) - V(i) \\ = \eta(-i-1) + (1-\eta)(-i+1) - (-i) = 1 - 2\eta < 0.
\end{align*}
Taking $\varepsilon = 2\eta - 1 > 0$ and noting that ${\{D_k\}}_{k \geq 1}$ is an irreducible discrete time Markov chain on the countable state space $\mathbb{Z}$, we conclude that the conditions of Foster's Theorem are satisfied. Therefore, ${\{D_k\}}_{k \geq 1}$ is positive recurrent. Since ${\{D_k\}}_{k \geq 1}$ is irreducible, positive recurrent discrete-time Markov chain, it has unique stationary distribution $\boldsymbol{\pi}$, which we have computed in Section~\ref{sec:stat_distn}. By the mean ergodic theorem,
\begin{align*}
\frac{1}{N} \sum_{i=1}^N \frac{1}{p_i} 
&\xrightarrow{a.s.} \E_{\boldsymbol{\pi}}\!\left[\frac{1}{p_i}\right] \\
&= 2\pi(0) + \sum_{d>0} \frac{\pi(d)}{1 - \eta} + \sum_{d<0} \frac{\pi(d)}{\eta} \\
&= 2\pi(0) + \frac{1 - \pi(0)}{2} \left( \frac{1}{1 - \eta} + \frac{1}{\eta} \right) \\
&= \frac{1 - 4\eta + 12\eta^2 - 8\eta^3}{4\eta^2(1 - \eta)}.
\end{align*}
A similar calculation for $\tfrac{1}{1-p_i}$ yields,
\begin{align*}
\frac{1}{N} \sum_{i=1}^N \frac{1}{1-p_i} 
&\xrightarrow{a.s.} \E_{\boldsymbol{\pi}}\!\left[\frac{1}{1 - p_i}\right] \\
&= 2\pi(0) + \sum_{d>0} \frac{\pi(d)}{\eta} + \sum_{d<0} \frac{\pi(d)}{1 - \eta} \\
&= 2\pi(0) + \frac{1 - \pi(0)}{2} \left( \frac{1}{1 - \eta} + \frac{1}{\eta} \right) \\
&= \frac{1 - 4\eta + 12\eta^2 - 8\eta^3}{4\eta^2(1 - \eta)},
\end{align*}
implying Efron's design is weakly stable with $\ponestar = \tfrac{4\eta^2(1-\eta)}{1-4\eta+12\eta^2-8\eta^3}$ and $\ptwostar = \tfrac{1-4\eta+8\eta^2-4\eta^3}{1-4\eta+12\eta^2-8\eta^3}.$
Moreover, mean ergodic theorem also gives,
\begin{align*}
\frac{1}{N} \sum_{i=1}^N p_i 
&\xrightarrow{a.s.} \E_{\boldsymbol{\pi}}(p_i), \\
\E_{\boldsymbol{\pi}}[p_i] 
&= \frac{\pi(0)}{2} 
  + \sum_{d>0} \pi(d)(1-\eta) 
  + \sum_{d<0} \pi(d)\eta \\
&= \frac{\pi(0)}{2} + \sum_{d>0} \pi(d) \\
&= \frac{\pi(0)}{2} + \frac{1-\pi(0)}{2} \\
&= \frac{1}{2},
\end{align*}
implying Efron's design satisfies the extra condition~\eqref{eqn:extra_assumption}.
%Consequently, the central limit theorems for $\snipw$ and $\snaipw$ follow directly from Theorems~\ref{Thm:ipw} and~\ref{Thm:aipw}, with asymptotic variances given by~\eqref{eqn:vipwefron} and~\eqref{eqn:vaipwefron}, obtained by substituting
%$\ponestar = \frac{4\eta^2(1-\eta)}{1-4\eta+12\eta^2-8\eta^3}$ and
%$\ptwostar = \frac{1-4\eta+8\eta^2-4\eta^3}{1-4\eta+12\eta^2-8\eta^3}$
%into~\eqref{eqn:vipw_cases} and~\eqref{eqn:vaipw_cases}.

\setcounter{lemma}{2}
\begin{lemma}\label{lemma:derived_from_ass_aipw_1}
Under Assumption~\ref{assn:AIPW}(b)-(c), as $N \to \infty$,
\begin{align*}
\frac{1}{N} \sum_{i=1}^N \left(\Yione - \overline{Y}_{i-1}(1)\right)^2 \to \Yvar_1
\quad \text{ and } \quad
\frac{1}{N} \sum_{i=1}^N \left(\Yizero - \overline{Y}_{i-1}(0)\right)^2 \to \Yvar_0.
\end{align*}
\end{lemma}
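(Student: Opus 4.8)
The plan is to exploit the fact that, in the design-based framework, the potential outcomes $\{\Yione\}$ are fixed, so every quantity appearing here is a deterministic sequence and each limit is an ordinary (non-stochastic) limit. The central observation is that the prefix average $\overline{Y}_{i-1}(1) = \tfrac{1}{i-1}\sum_{j<i} Y_j(1)$ is precisely the sequence $\overline{Y}_m(1)$ evaluated at $m = i-1$; hence Assumption~\ref{assn:AIPW}(c) already guarantees $\overline{Y}_{i-1}(1) \to \Ybar_1$ as $i \to \infty$. I will carry out the argument for the treatment arm, the control arm being identical.

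First I would recenter each deviation at the limiting mean $\Ybar_1$ rather than at the moving average, writing
\[
\big(\Yione - \overline{Y}_{i-1}(1)\big)^2 = \big(\Yione - \Ybar_1\big)^2 + 2\big(\Yione - \Ybar_1\big)\big(\Ybar_1 - \overline{Y}_{i-1}(1)\big) + \big(\Ybar_1 - \overline{Y}_{i-1}(1)\big)^2,
\]
then average over $i$ and treat the three resulting Cesàro sums separately. For the leading term I would invoke the parallel-axis identity
\[
\frac{1}{N}\sum_{i=1}^N (\Yione - \Ybar_1)^2 = \frac{1}{N}\sum_{i=1}^N \big(\Yione - \overline{Y}_N(1)\big)^2 + \big(\overline{Y}_N(1) - \Ybar_1\big)^2;
\]
the first piece converges to $\Yvar_1$ by Assumption~\ref{assn:AIPW}(c), while the second vanishes since $\overline{Y}_N(1) \to \Ybar_1$, so the leading term contributes exactly the target limit $\Yvar_1$.

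For the remaining two terms the engine is the classical Cesàro lemma: if $b_i \to 0$ then $\tfrac1N\sum_{i\le N} b_i \to 0$. The quadratic term $\tfrac{1}{N}\sum_{i=1}^N (\Ybar_1 - \overline{Y}_{i-1}(1))^2 \to 0$ follows directly, since $(\Ybar_1 - \overline{Y}_{i-1}(1))^2 \to 0$. For the cross term I would bound $|\Yione - \Ybar_1| \le 2M$ using Assumption~\ref{assn:AIPW}(b), so that the cross sum is dominated in absolute value by $4M \cdot \tfrac{1}{N}\sum_{i=1}^N |\Ybar_1 - \overline{Y}_{i-1}(1)|$, which again vanishes by Cesàro. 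The only boundary subtlety is the $i=1$ term, where $\overline{Y}_0(1)$ is taken to be $0$ (matching the convention used for $\widehat{Y}$); this single bounded summand contributes $O(1/N)$ and is therefore negligible.

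I do not anticipate a genuine obstacle here: the argument is a routine real-analysis computation once one recognizes that the moving averages inherit their convergence from Assumption~\ref{assn:AIPW}(c). The only points requiring mild care are (i) using the parallel-axis decomposition to transfer the assumed convergence of the sample variance (centered at $\overline{Y}_N(1)$) into convergence of the sum centered at the fixed limit $\Ybar_1$, and (ii) the uniform bound on $|\Yione - \Ybar_1|$ that collapses the cross term into a Cesàro average of a vanishing sequence.
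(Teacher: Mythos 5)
Your proof is correct, and it takes a route that differs from the paper's in two concrete ways. The paper expands the square around the \emph{sample} mean $\Ybar_N(1)$, writing the sum as $A_N + B_N + C_N$ with $A_N = \tfrac{1}{N}\sum_i (\Yione - \Ybar_N(1))^2$, $B_N = \tfrac{1}{N}\sum_i (\Ybar_N(1) - \overline{Y}_{i-1}(1))^2$, and a cross term $C_N$; the term $A_N$ then converges by Assumption~\ref{assn:AIPW}(c) with no extra work, but the summands of $B_N$ depend on both $i$ and $N$, so the paper must run an explicit $\varepsilon$--$K$ splitting argument rather than quote a Ces\`aro lemma, and it kills $C_N$ via Cauchy--Schwarz, $|C_N| \le 2A_N^{1/2}B_N^{1/2}$. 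You instead recenter at the \emph{fixed} limit $\Ybar_1$, which costs you the parallel-axis identity
\begin{align*}
\frac{1}{N}\sum_{i=1}^N (\Yione - \Ybar_1)^2 = \frac{1}{N}\sum_{i=1}^N \left(\Yione - \Ybar_N(1)\right)^2 + \left(\Ybar_N(1) - \Ybar_1\right)^2
\end{align*}
to recover the leading term, but buys you summands in the remainder terms that depend only on $i$: since $\overline{Y}_{i-1}(1) \to \Ybar_1$ by Assumption~\ref{assn:AIPW}(c), both $\tfrac{1}{N}\sum_i (\Ybar_1 - \overline{Y}_{i-1}(1))^2$ and the cross term (after the uniform bound $|\Yione - \Ybar_1| \le 2M$ from Assumption~\ref{assn:AIPW}(b)) vanish by a direct application of the classical Ces\`aro lemma for deterministic null sequences. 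The two proofs are of essentially equal length and rigor; yours is arguably cleaner in that the hand-crafted $\varepsilon$--$K$ argument is replaced by a standard lemma, while the paper's avoids any recentering step. Your handling of the $i=1$ boundary term as an $O(1/N)$ contribution is also correct and is a detail the paper glosses over.
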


\begin{proof}
We begin by proving the first part. The proof of the second part proceeds analogously. %Assumption~\ref{assn:AIPW}(c) implies that
%\begin{align*}
%\Ybar_N(1) \to \Ybar_1, 
%\quad \text{and} \quad
%\frac{1}{N}\sum_{i=1}^N \bigl(\Yione - \Ybar_N(1)\bigr)^2 \to \Yvar_1,
%\end{align*}
%as $N \to \infty$. 
Consider the decomposition
\begin{align*}
\frac{1}{N}\sum_{i=1}^N \left(\Yione - \overline{Y}_{i-1}(1)\right)^2
&= 
\underbrace{\frac{1}{N}\sum_{i=1}^N \left(\Yione - \Ybar_N(1)\right)^2}_{A_N}
+ 
\underbrace{\frac{1}{N}\sum_{i=1}^N \left(\Ybar_N(1) - \overline{Y}_{i-1}(1)\right)^2}_{B_N} \\
&\quad + 
\underbrace{\frac{2}{N}\sum_{i=1}^N 
\left(\Yione - \Ybar_N(1)\right)
\left(\Ybar_N(1) - \overline{Y}_{i-1}(1)\right)}_{C_N}.
\end{align*}
By Assumption~\ref{assn:AIPW}(c), $A_N \to \Yvar_1$.  
Next, we show that $B_N \to 0$. Fix $\varepsilon > 0$.  
By Assumption~\ref{assn:AIPW}(c), $\Ybar_N(1) \to \Ybar_1$, so there exists $K \in \mathbb{N}$ such that for all $i \ge K + 1$,
\begin{align*}
\left|\,\Ybar_N(1) - \overline{Y}_{i-1}(1)\,\right| \le 2\varepsilon.
\end{align*}
Using the boundedness of $\Yione$ (Assumption~\ref{assn:AIPW}(b)), we can decompose $B_N$ as
\begin{align*}
B_N
= \frac{1}{N}\sum_{i=1}^K \left(\Ybar_N(1) - \overline{Y}_{i-1}(1)\right)^2
  + \frac{1}{N}\sum_{i=K+1}^N \left(\Ybar_N(1) - \overline{Y}_{i-1}(1)\right)^2.
\end{align*}
The first term is bounded by $\tfrac{4KM^2}{N}$ and the second by $4\varepsilon^2$, yielding
\begin{align*}
   B_N \le \frac{4KM^2}{N} + 4\varepsilon^2. 
\end{align*}
Letting $N \to \infty$ and subsequently $\varepsilon \downarrow 0$ gives $B_N \to 0$. Finally, by the Cauchy-Schwarz inequality,
\begin{align*}
\left|C_N\right| \le 2 A_N^{1/2} B_N^{1/2} \to 0,
\end{align*}
since $A_N \to \Yvar_1$ and $B_N \to 0$.  
Combining these results yields
\begin{align*}
\frac{1}{N}\sum_{i=1}^N \left(\Yione - \overline{Y}_{i-1}(1)\right)^2 \to \Yvar_1.
\end{align*}
The other part proceeds analogously.
\end{proof}

\begin{lemma}\label{lemma:derived_from_ass_aipw_2}
Under Assumption~\ref{assn:AIPW}(b)-(c), as $N \to \infty$,
\begin{align*}
\frac{2}{N}\sum_{i=1}^N \left(\Yizero-\overline{Y}_{i-1}(0)\right)\left(\Yione-\overline{Y}_{i-1}(1)\right) \to 2\Ycov.
\end{align*}
\end{lemma}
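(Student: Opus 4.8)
The plan is to mirror the proof of Lemma~\ref{lemma:derived_from_ass_aipw_1}, exploiting that the averaged cross-product splits into a leading term matching Assumption~\ref{assn:AIPW}(c) together with several cross terms that each vanish. Since the potential outcomes are fixed, every quantity appearing here is deterministic, so the statement is a purely analytic convergence claim rather than a convergence-in-probability one.

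First I would introduce, for each $i$, the centered quantities
\begin{align*}
a_i = \Yizero - \Ybar_N(0), \quad b_i = \Ybar_N(0) - \overline{Y}_{i-1}(0), \quad c_i = \Yione - \Ybar_N(1), \quad d_i = \Ybar_N(1) - \overline{Y}_{i-1}(1),
\end{align*}
so that $\Yizero - \overline{Y}_{i-1}(0) = a_i + b_i$ and $\Yione - \overline{Y}_{i-1}(1) = c_i + d_i$. Expanding the product then gives
\begin{align*}
\frac{1}{N}\sum_{i=1}^N \left(\Yizero-\overline{Y}_{i-1}(0)\right)\left(\Yione-\overline{Y}_{i-1}(1)\right)
&= \frac{1}{N}\sum_{i=1}^N a_i c_i + \frac{1}{N}\sum_{i=1}^N a_i d_i \\
&\quad + \frac{1}{N}\sum_{i=1}^N b_i c_i + \frac{1}{N}\sum_{i=1}^N b_i d_i.
\end{align*}
The first term is exactly the empirical covariance appearing in Assumption~\ref{assn:AIPW}(c), hence converges to $\Ycov$; multiplying by two yields the claimed $2\Ycov$ once the remaining three terms are shown to be negligible.

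The main work, and the main obstacle, lies in controlling the three cross terms, which hinges on showing that the drift averages $\tfrac{1}{N}\sum_{i=1}^N b_i^2$ and $\tfrac{1}{N}\sum_{i=1}^N d_i^2$ tend to zero. But these are precisely the quantities denoted $B_N$ in the proof of Lemma~\ref{lemma:derived_from_ass_aipw_1}, where it was established — using $\Ybar_N(\ell)\to\Ybar_\ell$ from Assumption~\ref{assn:AIPW}(c), the uniform bound from Assumption~\ref{assn:AIPW}(b), and a split of the index range at a fixed threshold $K$ — that $B_N\to 0$. By the symmetry of Assumption~\ref{assn:AIPW}(b)-(c) in the two potential outcomes, this argument applies verbatim for both $\ell=0$ and $\ell=1$, so I would invoke it directly.

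With $\tfrac{1}{N}\sum_{i=1}^N a_i^2 \to \Yvar_0$ and $\tfrac{1}{N}\sum_{i=1}^N c_i^2 \to \Yvar_1$ from Assumption~\ref{assn:AIPW}(c), and $\tfrac{1}{N}\sum_{i=1}^N b_i^2,\ \tfrac{1}{N}\sum_{i=1}^N d_i^2 \to 0$, the Cauchy–Schwarz inequality bounds each cross term, e.g.
\begin{align*}
\left|\frac{1}{N}\sum_{i=1}^N a_i d_i\right| \le \left(\frac{1}{N}\sum_{i=1}^N a_i^2\right)^{1/2}\left(\frac{1}{N}\sum_{i=1}^N d_i^2\right)^{1/2},
\end{align*}
and analogously for $\tfrac{1}{N}\sum_{i=1}^N b_i c_i$ and $\tfrac{1}{N}\sum_{i=1}^N b_i d_i$. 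In each case the right-hand side is a product of a convergent (hence bounded) factor with a factor tending to zero, so all three cross terms vanish. Collecting the pieces gives convergence of the averaged cross-product to $\Ycov$, and multiplying by two completes the proof.
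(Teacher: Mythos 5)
Your proof is correct and takes essentially the same route as the paper's: the identical four-term decomposition around $\Ybar_N(0)$ and $\Ybar_N(1)$, with the leading term converging to $\Ycov$ by Assumption~\ref{assn:AIPW}(c) and the three cross terms controlled by Cauchy--Schwarz together with the $B_N \to 0$ drift argument from the proof of Lemma~\ref{lemma:derived_from_ass_aipw_1}. You actually spell out the vanishing of the cross terms in more detail than the paper, which states only that they ``go to zero under bounds provided by the Cauchy--Schwarz inequality.''
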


\begin{proof}
Observe, 
\begin{align*}
\frac{2}{N}\sum_{i=1}^N \left(\Yizero - \overline{Y}_{i-1}(0)\right)\left(\Yione - \overline{Y}_{i-1}(1)\right)
&= \frac{2}{N}\sum_{i=1}^N \left(\Yizero - \Ybar_{N}(0)\right)\left(\Yione - \Ybar_{N}(1)\right) \\
&\quad + \frac{2}{N}\sum_{i=1}^N \left(\Ybar_{N}(0) - \overline{Y}_{i-1}(0)\right)\left(\Yione - \Ybar_{N}(1)\right) \\
&\quad + \frac{2}{N}\sum_{i=1}^N \left(\Yizero - \Ybar_{N}(0)\right)\left(\Ybar_{N}(1) - \overline{Y}_{i-1}(1)\right) \\
&\quad + \frac{2}{N}\sum_{i=1}^N \left(\Ybar_{N}(0) - \overline{Y}_{i-1}(0)\right)\left(\Ybar_{N}(1) - \overline{Y}_{i-1}(1)\right).
\end{align*}
As $N \to \infty,$ the first term on the right hand side converges to $2\Ycov$ by Assumption~\ref{assn:AIPW}(c), whereas the remaining terms go to zero under bounds provided by the Cauchy–Schwarz inequality. Hence, 
\begin{align*}
\frac{2}{N}\sum_{i=1}^N \left(\Yizero-\overline{Y}_{i-1}(0)\right)\left(\Yione-\overline{Y}_{i-1}(1)\right) \to 2\Ycov \text{ as } N \to \infty.
\end{align*}
\end{proof}

\begin{lemma}\label{lemma:bigintosmall}
Under Assumption~\ref{assn:AIPW}(b)-(c), as $N \to \infty$,
\begin{align*}
\frac{1}{N} \sum_{i=1}^N \left(\Yione - \widehat{Y}_{i-1}(1)\right)^2 \to \Yvar_1
\quad \text{ and } \quad
\frac{1}{N} \sum_{i=1}^N \left(\Yizero - \widehat{Y}_{i-1}(0)\right)^2 \to \Yvar_0.
\end{align*}
\end{lemma}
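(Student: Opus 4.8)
The plan is to reduce the statement to Lemma~\ref{lemma:derived_from_ass_aipw_1}, which already gives $\tfrac{1}{N}\sum_{i=1}^N(\Yione - \overline{Y}_{i-1}(1))^2 \to \Yvar_1$, by showing that replacing the unobservable running average $\overline{Y}_{i-1}(1) = \tfrac{1}{i-1}\sum_{j<i}Y_j(1)$ by the IPW running average $\widehat{Y}_{i-1}(1)$ does not alter the limit. Since $\widehat{Y}_{i-1}(1)$ depends on the random assignments, the convergence asserted here is in probability. Writing $E_{i-1} := \widehat{Y}_{i-1}(1) - \overline{Y}_{i-1}(1)$, I would expand
\[
\left(\Yione - \widehat{Y}_{i-1}(1)\right)^2 = \left(\Yione - \overline{Y}_{i-1}(1)\right)^2 - 2\left(\Yione - \overline{Y}_{i-1}(1)\right)E_{i-1} + E_{i-1}^2,
\]
so that it suffices to show the averages of the last two terms vanish in probability.

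The heart of the argument is to control $\tfrac{1}{N}\sum_{i=1}^N E_{i-1}^2$. Because the potential outcomes are fixed and $p_j \in \sigmafield$, the increments $\tfrac{(K_j - p_j)Y_j(1)}{p_j}$ form a martingale difference sequence, and $E_{i-1} = \tfrac{1}{i-1}\sum_{j<i}\tfrac{(K_j - p_j)Y_j(1)}{p_j}$ is a normalized martingale sum. This is exactly the quantity $p_i A_i$ analyzed in the proof of Theorem~\ref{Thm:aipw}; invoking the second-moment bound~\eqref{eq:Ai_bound} together with $p_i \le 1-\delta$ (Assumption~\ref{assn:AIPW}(a)) and $|Y_j(1)| \le M$ (Assumption~\ref{assn:AIPW}(b)) gives $\E\!\left[E_{i-1}^2\right] = \mathcal{O}(1/(i-1))$. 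Averaging over $i$ and using $\sum_{i=2}^N \tfrac{1}{i-1} = \mathcal{O}(\log N)$ yields $\E\!\left[\tfrac{1}{N}\sum_{i=1}^N E_{i-1}^2\right] = \mathcal{O}(\log N / N) \to 0$, so by nonnegativity and Markov's inequality $\tfrac{1}{N}\sum_{i=1}^N E_{i-1}^2 \xrightarrow{p} 0$. The $i=1$ boundary term (where $\widehat{Y}_0(1)=0$ by convention) is bounded by $M^2$ and contributes $\mathcal{O}(1/N)$, hence is negligible.

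For the cross term, Cauchy--Schwarz gives
\[
\left|\frac{2}{N}\sum_{i=1}^N \left(\Yione - \overline{Y}_{i-1}(1)\right)E_{i-1}\right| \le 2\left(\frac{1}{N}\sum_{i=1}^N \left(\Yione - \overline{Y}_{i-1}(1)\right)^2\right)^{1/2}\left(\frac{1}{N}\sum_{i=1}^N E_{i-1}^2\right)^{1/2},
\]
where the first factor converges to $\Yvar_1^{1/2}$ by Lemma~\ref{lemma:derived_from_ass_aipw_1} and the second tends to $0$ in probability; hence the cross term vanishes by Slutsky's theorem. Combining the three pieces establishes the first claim, and the argument for $\tfrac{1}{N}\sum_{i=1}^N(\Yizero - \widehat{Y}_{i-1}(0))^2 \to \Yvar_0$ is identical after swapping $p_j \mapsto 1-p_j$ and $K_j \mapsto 1-K_j$. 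I expect the main obstacle to be the $E_{i-1}^2$ bound: the key is recognizing $E_{i-1}$ as a martingale sum, so that the off-diagonal cross-products vanish in expectation and only the $\mathcal{O}(1/(i-1))$ diagonal survives; everything downstream is routine Cauchy--Schwarz and Ces\`aro averaging.
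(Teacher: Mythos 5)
Your proof is correct, and it takes a genuinely, if modestly, different route than the paper's. Both arguments reduce the claim to Lemma~\ref{lemma:derived_from_ass_aipw_1}, and both rest on the same core fact: that $E_{i-1} = \widehat{Y}_{i-1}(1) - \overline{Y}_{i-1}(1)$ is a normalized martingale sum with $\E\left[E_{i-1}^2\right] = \mathcal{O}\left(1/(i-1)\right)$. The mechanics differ from there. The paper factors the difference of squares,
\begin{align*}
\left(\Yione - \overline{Y}_{i-1}(1)\right)^2 - \left(\Yione - \widehat{Y}_{i-1}(1)\right)^2
= \left(\overline{Y}_{i-1}(1) - \widehat{Y}_{i-1}(1)\right)\left(\overline{Y}_{i-1}(1) + \widehat{Y}_{i-1}(1) - 2\Yione\right),
\end{align*}
bounds the second factor by a constant, and so needs only $\tfrac{1}{N}\sum_{i}\left|E_{i-1}\right| \xrightarrow{p} 0$; it gets this by showing $E_{i-1}\xrightarrow{p}0$ pointwise via Chebyshev and then invoking the Ces\`{a}ro-in-probability Lemma~\ref{lemma:cesaro_inprob}. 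You instead expand the square fully, control $\tfrac{1}{N}\sum_{i} E_{i-1}^2$ directly in expectation, obtaining the explicit rate $\mathcal{O}(\log N / N)$ before applying Markov's inequality, and dispose of the cross term by Cauchy--Schwarz. Your version buys a quantitative rate and avoids the paper's auxiliary Ces\`{a}ro machinery; the paper's factorization buys a slightly shorter argument, since the single product term subsumes what you must split into a cross term and a squared term. One shared caveat: like the paper's own proof, your argument uses $p_i \in [\delta, 1-\delta]$, i.e., Assumption~\ref{assn:AIPW}(a), which the lemma statement omits from its hypotheses; this is an imprecision inherited from the paper, not a gap in your proof. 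Your observation that the convergence must be interpreted in probability (since $\widehat{Y}_{i-1}(\ell)$ is random) likewise matches the paper's own conclusion.
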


\begin{proof}
We first establish the result for the first part; the proof of the second part follows analogously. We first show that
\begin{align}
\label{eqn:hatbar}
\widehat{Y}_{i-1}(1) - \overline{Y}_{i-1}(1) \stackrel{\mathbb{P}}{\rightarrow} 0.
\end{align}
Note that,
\begin{align*}
\widehat{Y}_{i-1}(1) - \overline{Y}_{i-1}(1)
= \frac{1}{i-1} \sum_{j=1}^{i-1} \frac{(K_j - p_j) Y_j(1)}{p_j}.
\end{align*}
Since $\E\!\left[\frac{(K_j - p_j) Y_j(1)}{p_j} \,\middle|\, \mathcal{F}_{j-1}\right] = 0$, the summands form a martingale difference sequence. 
Under Assumption~\ref{assn:AIPW}(a)–(b),
\begin{align*}
\sum_{j=1}^{i-1} \frac{1}{(i-1)^2} 
\E\!\left[\!\left(\frac{(K_j - p_j) Y_j(1)}{p_j}\right)^{\!2}\right]
&= \sum_{j=1}^{i-1} \frac{1}{(i-1)^2} 
\E\!\left[\frac{Y_j(1)^2 p_j(1 - p_j)}{p_j^2}\right] \\
&\leq \frac{M^2 \delta}{(i-1)(1 - \delta)}.
\end{align*}
Hence, by Chebyshev’s inequality, claim~\eqref{eqn:hatbar} follows.

Next, consider
\begin{align*}
&\frac{1}{N} \sum_{i=1}^N \!\left[\!
  \left(\Yione - \overline{Y}_{i-1}(1)\right)^2 
  - \left(\Yione - \widehat{Y}_{i-1}(1)\right)^2
\!\right] \\
&= \frac{1}{N} \sum_{i=1}^N 
  \left(\overline{Y}_{i-1}(1) - \widehat{Y}_{i-1}(1)\right)
  \left(\overline{Y}_{i-1}(1) + \widehat{Y}_{i-1}(1) - 2\Yione\right).
\end{align*}
By boundedness of $\Yione$ and $p_i$,
\begin{align*}
\bigl|\overline{Y}_{i-1}(1) + \widehat{Y}_{i-1}(1) - 2\Yione\bigr| 
\leq 3M + \frac{M}{\delta}.    
\end{align*}
Hence, for any $\varepsilon > 0$,
\begin{align}
\label{eqn:bound}
\Prob\!\left[\!
\left|\frac{1}{N} \sum_{i=1}^N 
  \left(\Yione - \overline{Y}_{i-1}(1)\right)^2 
  - \frac{1}{N} \sum_{i=1}^N 
  \left(\Yione - \widehat{Y}_{i-1}(1)\right)^2
\right| \geq \varepsilon
\!\right]
\leq 
\Prob\!\left[\!
\frac{C}{N} \sum_{i=1}^N 
  \left|\overline{Y}_{i-1}(1) - \widehat{Y}_{i-1}(1)\right| 
\geq \varepsilon
\!\right],
\end{align}
where $C = 3M + \tfrac{M}{\delta}$. 
By Lemma~\ref{lemma:cesaro_inprob} and~\eqref{eqn:hatbar}, 
the upper bound in~\eqref{eqn:bound} converges to zero as $N \to \infty$, giving
\begin{align}
\label{eqn:bound1}
\frac{1}{N} \sum_{i=1}^N 
  \left(\Yione - \overline{Y}_{i-1}(1)\right)^2 
- \frac{1}{N} \sum_{i=1}^N 
  \left(\Yione - \widehat{Y}_{i-1}(1)\right)^2 
\stackrel{\mathbb{P}}{\rightarrow} 0.
\end{align}
Lemma~\ref{lemma:derived_from_ass_aipw_1} and~\eqref{eqn:bound1} then imply
\begin{align}
\frac{1}{N} \sum_{i=1}^N 
  \left(\Yione - \widehat{Y}_{i-1}(1)\right)^2 
\stackrel{\mathbb{P}}{\rightarrow} \Yvar_{1}.
\label{eqn:fact1}
\end{align}    
\end{proof}

\begin{lemma}[H\'{a}jek's Lemma]
\label{lemma:hajek_proof} 
Let $\{S_n\}_{n \geq 1}$ and $\{T_n\}_{n \geq 1}$ be sequences of random variables, and let $L$ be a random variable. 
If 
\begin{align}
\frac{T_n - \E(T_n)}{\sqrt{\Var(T_n)}} \;\xrightarrow{d}\; L
\quad\text{and}\quad
\frac{\E\!\left[(T_n - S_n)^2\right]}{\Var(T_n)} \;\to\; 0, \label{eqn:condition}
\end{align}
then
\begin{align*}
\frac{S_n - \E(S_n)}{\sqrt{\Var(S_n)}} \;\xrightarrow{d}\; L.
\end{align*}
\end{lemma}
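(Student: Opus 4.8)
The plan is to establish the conclusion in two stages, each reducing to an application of Slutsky's theorem, using only the $L^2$ hypothesis in~\eqref{eqn:condition}. The key observation is that $\sqrt{\Var(X)}$ is precisely the $L^2$ norm of the centered variable $X - \E(X)$; consequently both the pathwise closeness of $S_n$ to $T_n$ and the ratio of their standard deviations can be controlled by the single quantity $\E[(T_n - S_n)^2]/\Var(T_n)$ that tends to zero by assumption.

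First I would compare the two sequences under a \emph{common} normalization by $\sqrt{\Var(T_n)}$. Writing
\[
\frac{S_n - \E(S_n)}{\sqrt{\Var(T_n)}} - \frac{T_n - \E(T_n)}{\sqrt{\Var(T_n)}}
= \frac{(S_n - T_n) - \E(S_n - T_n)}{\sqrt{\Var(T_n)}},
\]
the right-hand side is the centered version of $(S_n - T_n)/\sqrt{\Var(T_n)}$, so its second moment equals $\Var(S_n - T_n)/\Var(T_n) \le \E[(S_n - T_n)^2]/\Var(T_n)$, using $\Var(X) \le \E(X^2)$. By~\eqref{eqn:condition} this tends to $0$, so the difference converges to $0$ in $L^2$ and hence in probability. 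Combining with the hypothesis $\frac{T_n - \E(T_n)}{\sqrt{\Var(T_n)}} \xrightarrow{d} L$ via Slutsky's theorem would give $\frac{S_n - \E(S_n)}{\sqrt{\Var(T_n)}} \xrightarrow{d} L$.

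Second, I would swap the normalizing constant $\sqrt{\Var(T_n)}$ for $\sqrt{\Var(S_n)}$. Invoking the reverse triangle inequality for the $L^2$ norm on centered variables,
\[
\left| \sqrt{\Var(S_n)} - \sqrt{\Var(T_n)} \right| \le \sqrt{\Var(S_n - T_n)} \le \sqrt{\E[(S_n - T_n)^2]},
\]
and dividing through by $\sqrt{\Var(T_n)}$ shows $\sqrt{\Var(S_n)}/\sqrt{\Var(T_n)} \to 1$, equivalently $\sqrt{\Var(T_n)}/\sqrt{\Var(S_n)} \to 1$. Factoring
\[
\frac{S_n - \E(S_n)}{\sqrt{\Var(S_n)}}
= \frac{S_n - \E(S_n)}{\sqrt{\Var(T_n)}} \cdot \frac{\sqrt{\Var(T_n)}}{\sqrt{\Var(S_n)}}
\]
and applying Slutsky's theorem once more (a sequence converging in distribution times a deterministic sequence converging to $1$) yields the desired conclusion.

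The individual steps are routine; the only point requiring genuine care is the variance-ratio estimate, i.e.\ proving $\sqrt{\Var(S_n)}/\sqrt{\Var(T_n)} \to 1$ rather than merely controlling the numerators, since the conclusion normalizes by $\sqrt{\Var(S_n)}$ whereas the hypothesis is phrased through $\sqrt{\Var(T_n)}$. The reverse triangle inequality in $L^2$ is exactly the tool that bridges this gap, and the elementary bound $\Var(X) \le \E(X^2)$ lets both stages draw on the same ratio supplied by~\eqref{eqn:condition}.
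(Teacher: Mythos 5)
Your proposal is correct and follows essentially the same two-stage argument as the paper: first show the two sequences are asymptotically equivalent under the common normalization $\sqrt{\Var(T_n)}$ and apply Slutsky, then show $\Var(S_n)/\Var(T_n) \to 1$ and apply Slutsky again. The only cosmetic difference is in the variance-ratio step, where you invoke the reverse triangle inequality in $L^2$ while the paper expands $\Var(T_n - S_n)$ and applies the Cauchy--Schwarz inequality to $\Cov(T_n, S_n)$; these are equivalent manipulations of the same bound.
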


\begin{proof}
We first compare the standardized versions of $T_n$ and $S_n$ under the variance of $T_n$. Observe that
\begin{align*}
\E\!\left[
  \left(
    \frac{T_n - \E(T_n)}{\sqrt{\Var(T_n)}}
    - \frac{S_n - \E(S_n)}{\sqrt{\Var(T_n)}}
  \right)^{\!2}
\right]
&= \frac{\E\!\left[\,(T_n - S_n) - \E(T_n - S_n)\,\right]^{\!2}}{\Var(T_n)} 
  \;\le\; \frac{\E\!\left[(T_n - S_n)^2\right]}{\Var(T_n)} 
  \;\to\; 0.
\end{align*}
Hence,
\begin{align}
\frac{T_n - \E(T_n)}{\sqrt{\Var(T_n)}}
- \frac{S_n - \E(S_n)}{\sqrt{\Var(T_n)}} \stackrel{\mathbb{P}}{\rightarrow} 0,
\quad\text{and thus}\quad
\frac{S_n - \E(S_n)}{\sqrt{\Var(T_n)}} \xrightarrow{d} L.
\label{eqn:ratio0_refined}
\end{align}To replace $\Var(T_n)$ by $\Var(S_n)$ in the denominator, note that
\begin{align*}
\frac{\E[(T_n - S_n)^2]}{\Var(T_n)}
&\ge \frac{\Var(T_n - S_n)}{\Var(T_n)}
 = \frac{\Var(T_n) + \Var(S_n) - 2\Cov(T_n, S_n)}{\Var(T_n)}.
\end{align*}
By the Cauchy-Schwarz inequality, $\Cov(T_n, S_n) \le \sqrt{\Var(T_n)\Var(S_n)}$, so
\begin{align*}
\frac{\E[(T_n - S_n)^2]}{\Var(T_n)}
&\ge \left(1 - \sqrt{\frac{\Var(S_n)}{\Var(T_n)}}\right)^{\!2}.
\end{align*}
Since the left-hand side tends to zero by the condition~\eqref{eqn:condition}, it follows that
\begin{align}
\frac{\Var(S_n)}{\Var(T_n)} \to 1.
\label{eqn:ratio1_refined}
\end{align}
Combining~\eqref{eqn:ratio0_refined} and~\eqref{eqn:ratio1_refined} with Slutsky’s theorem yields
\[
\frac{S_n - \E(S_n)}{\sqrt{\Var(S_n)}} \xrightarrow{d} L,
\]
as required.
\end{proof}

\section{On the lack of ordering in the efficiency of variance estimators}
\label{sec:onthelack}
As noted in Remark~\ref{rem:APIWunknown}, $\vaipw \leq \vipw$, implying that $\snaipw$ is more efficient than $\snipw$. A natural question is whether the corresponding variance estimators exhibit a similar ordering. In general, this need not be the case: the variance estimators may not be ordered in terms of conservativeness, and their behaviour depends on the potential outcomes. Without additional assumptions on the performance of the regression adjustment, no general ordering can be established.

Consider the IPW estimator \(\snipw\) and the AIPW estimator \(\snaipw\) under a strongly stable design. Recall, as defined in our paper,
\begin{align*}
\widehat{\vipwstronghat}
= \left(\widehat{m}_0\sqrt{\frac{\pstar}{1-\pstar}}+\widehat{m}_1\sqrt{\frac{1-\pstar}{\pstar}}\right)^2,
\end{align*}
where \(\widehat{m}_0^2=\frac{1}{\max\{N_0,1\}}\sum_{i=1}^N(1-K_i)Y_i^2,\) and 
\(\widehat{m}_1^2=\frac{1}{\max\{N_1,1\}}\sum_{i=1}^N K_iY_i^2\), with
\(N_1=\sum_{i=1}^N K_i\) and \(N_0=N-N_1\). Similarly,
\begin{align*}
\widehat{\vaipwstronghat}
= \left(\widehat{\sigma}_0\sqrt{\frac{\pstar}{1-\pstar}}+\widehat{\sigma}_1\sqrt{\frac{1-\pstar}{\pstar}}\right)^2,
\end{align*}
where
\begin{align*}
\widehat{\sigma}_0^2=\frac{1}{\max\{N_0,1\}}\sum_{i=1}^N(1-K_i)\bigl(Y_i-\widehat{Y}_{i-1}(0)\bigr)^2,
\quad
\widehat{\sigma}_1^2=\frac{1}{\max\{N_1,1\}}\sum_{i=1}^N K_i\bigl(Y_i-\widehat{Y}_{i-1}(1)\bigr)^2,
\end{align*}
and
\begin{align*}
\widehat{Y}_{i-1}(0)=\frac{1}{i-1}\sum_{j<i}\frac{(1-K_j)Y_j}{1-p_j},
\qquad
\widehat{Y}_{i-1}(1)=\frac{1}{i-1}\sum_{j<i}\frac{K_jY_j}{p_j}.
\end{align*}
Observe that $(1-K_i)Y_i^2 \ge (1-K_i)\bigl(Y_i-\widehat{Y}_{i-1}(0)\bigr)^2$ holds if and only if the regression adjustment $\widehat{Y}_{i-1}(0)$ reduces the variability of the outcome, that is, if the predictor captures variation in $Y_i$ more effectively than the trivial predictor $0$. However, if the regression model is misspecified or unstable—for instance, in the early stages of the experiment—the prediction error may exceed the magnitude of the raw outcome. Thus, although one typically expects the AIPW variance estimator to be more efficient than the IPW variance estimator, this is an empirical property of the potential outcomes rather than a guaranteed theoretical result.

\section{Theoretical explanation of observed coverage patterns}
\label{sec:theoretical_explanation_of_observed_coverage_patterns}

The simulation results exhibit an interesting pattern: for very high nominal confidence levels, the variance estimator appears less conservative, with empirical coverage closer to the nominal level. We now show that this behaviour is consistent with the asymptotic theory. 

To illustrate the argument, we focus on the IPW estimator $\snipw$ under a strongly stable design; the other cases follow analogously. Under our assumptions, the asymptotic coverage at nominal level $\alpha\in(0,1)$ is
\begin{eqnarray*}
&& \lim_{N \to \infty}\mathbb{P}\!\left(
\ate \in 
\left[
\snipw 
- z_{1-\alpha/2}
\sqrt{\frac{\widehat{\vipwstrong}}{N}},
\;
\snipw 
+ z_{1-\alpha/2}
\sqrt{\frac{\widehat{\vipwstrong}}{N}}
\right]
\right)
\\[0.3em]
&=&
2\Phi\!\left(
z_{1-\alpha/2}
\sqrt{
\frac{
\left(
m_0\sqrt{\tfrac{\pstar}{1-\pstar}}
+
m_1\sqrt{\tfrac{1-\pstar}{\pstar}}
\right)^2
}{
\vipwstrong
}
}
\right) - 1 .
\end{eqnarray*}
Define $\Delta(\alpha)$ as the difference between asymptotic coverage and the nominal level:
\begin{align*}
   \Delta(\alpha) = 2\Phi\!\left(
z_{1-\alpha/2}
\sqrt{
\frac{
\left(
m_0\sqrt{\frac{\pstar}{1-\pstar}}
+
m_1\sqrt{\frac{1-\pstar}{\pstar}}
\right)^2
}{
\vipwstrong
}
}
\right)-1 - (1-\alpha). 
\end{align*} Differentiating $\Delta(\alpha)$ with respect to $\alpha$ yields

\begin{align*}
    \frac{d}{d\alpha} \Delta(\alpha) = 1 - k \frac{\phi(kz_{1-\alpha/2})}{\phi(z_{1-\alpha/2})},
\end{align*}
where $k = \sqrt{
\frac{
\left(
m_0\sqrt{\frac{\pstar}{1-\pstar}}
+
m_1\sqrt{\frac{1-\pstar}{\pstar}}
\right)^2
}{
\vipwstrong
}
}
.$ This expression simplifies to
\begin{align*}
    \frac{d}{d\alpha} \Delta(\alpha) = 1 - ke^{-\frac{1}{2}(k^2-1)z_{1-\alpha/2}^2}
\end{align*} By the Cauchy--Schwarz inequality, we have $k \ge 1$. The case $k=1$ corresponds to $\Delta(\alpha)\equiv 0$, i.e., the consistent variance estimator, which is exactly in agreement with our theory. When \(k > 1\), note that \(\Delta(\alpha) \to 0\) as \(\alpha \to 0\) and \(\alpha \to 1\). Moreover, since \(z_{1-\alpha/2}\) decreases as \(\alpha\) increases, the exponential term increases in \(\alpha\), implying that \(\frac{d}{d\alpha}\Delta(\alpha)\) changes sign exactly once. Consequently, \(\Delta(\alpha)\) is unimodal: it increases from zero, attains a maximum at some \(\alpha^\star \in (0,1)\), and then decreases back to zero. This explains the pattern observed in the simulation results.

\section{Practical assessment of design stability}
\label{sec:practical_assessment_of_stability}
A natural question, concerns how one might assess in practice whether a given adaptive design satisfies strong or weak stability. While the definitions are asymptotic, several simple diagnostic tools can be implemented when the assignment probabilities ${p_i}$ are explicit functions of past treatment assignments.  

For such designs, one may simulate trajectories of ${p_i}$ under the data-generating mechanism and examine their empirical behaviour as $i$ increases. A first diagnostic is to plot the sequence $p_i$ against the index $i$ across Monte Carlo replicates, which provides a direct visual assessment of whether the assignment probabilities stabilise; such stabilisation is indicative of strong stability.

A second set of diagnostics for weak stability is based on the quantities
\begin{align*} \frac{1}{N} \sum_{i=1}^N \frac{1}{p_i}, \qquad \frac{1}{N} \sum_{i=1}^N \frac{1}{1-p_i}, \end{align*} viewed as functions of $N$. Convergence of these quantities as $N$ increases is indicative of weak stability. 

As an example, we consider the empirical behaviour of the assignment probabilities under Wei’s adaptive coin design~\cite{Wei1978-gd}. For increasing indices $i$, we generate independent Monte Carlo realisations of $p_i$ according to the design. For a fixed tolerance level $\varepsilon > 0$, we estimate
\[
\mathbb{P}\bigl(|p_i - 1/2| > \varepsilon\bigr)
\]
by the proportion of simulated draws for which the deviation from $1/2$ exceeds $\varepsilon$. If $p_i \xrightarrow{\mathbb{P}} 1/2$, then for every fixed $\varepsilon > 0$ these probabilities should decay to zero as $i$ increases. Figure~\ref{fig:str_stability} displays the estimated probabilities for several values of $\varepsilon$. The clear monotone decay provides empirical evidence that $p_i$ converges in probability to $1/2$, consistent with strong stability.

\begin{figure}[H]
\centering
\includegraphics[width=0.8\textwidth]{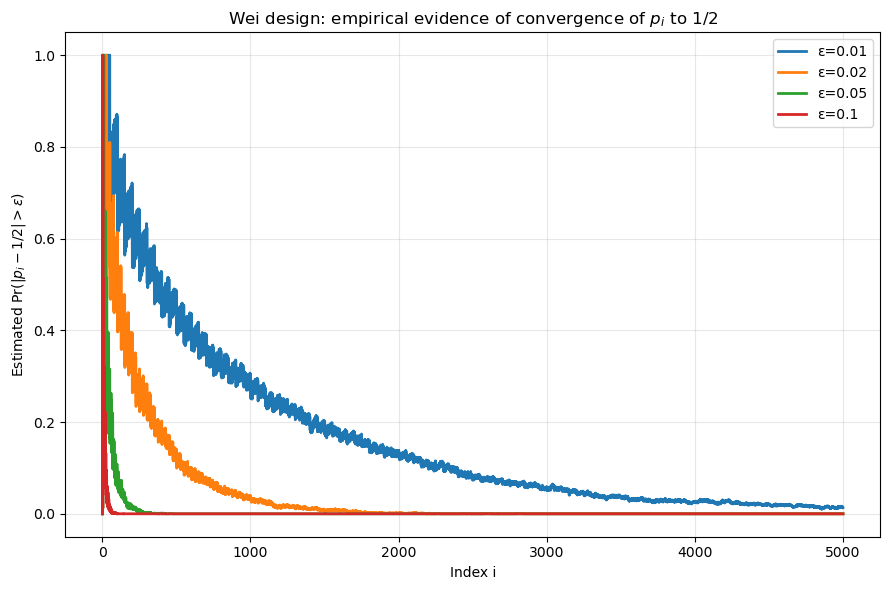}
\caption{Empirical evidence supporting the strong stability of Wei’s design~\cite{Wei1978-gd}.}
\label{fig:str_stability}
\end{figure}

%These diagnostics also provide intuition for the distinction between stability regimes. Under strong stability, the sequence $\{p_i\}$ concentrates around its limit, and the randomness in the assignment probabilities diminishes with $i$, effectively ``freezing'' the design. In contrast, under weak stability, such concentration may fail, and the variability in $p_i$ persists asymptotically. These differences are directly reflected in the behaviour of the above empirical summaries and can be readily assessed in practice through simulation.

\section{Why weak stability alone does not guarantee condition~\eqref{eqn:extra_assumption}}
\label{sec:weak_stab_alone_doesnt_imply_eqn18}
As discussed in Section~\ref{sec:ipw_results}, the additional condition
\begin{align*}
\frac{1}{N}\sum_{i=1}^N p_i \xrightarrow{\mathbb{P}} \widetilde{p},
\end{align*}
for some constant \(\widetilde{p} \in (0,1)\), is required, in addition to weak design stability, for consistent estimation of \(m_0^2\) and \(m_1^2\), and hence for the conservative variance estimator \(\widehat{\vipwweak}\) for \(\vipwweak\). A natural question is whether weak stability alone suffices to guarantee~\eqref{eqn:extra_assumption}. We show that this is not the case.

Consider two finite blocks of inclusion probabilities, \(A\) and \(B\), such that
\begin{align*}
\frac{1}{|A|} \sum_{p \in A} \frac{1}{p}
&=
\frac{1}{|B|} \sum_{p \in B} \frac{1}{p}, \\
\frac{1}{|A|} \sum_{p \in A} \frac{1}{1-p}
&=
\frac{1}{|B|} \sum_{p \in B} \frac{1}{1-p},
\end{align*}
but
\begin{align*}
\frac{1}{|A|} \sum_{p \in A} p
\neq
\frac{1}{|B|} \sum_{p \in B} p.
\end{align*}
Let the difference between these averages be \(\delta > 0\). Now construct a sequence \(\{p_i\}_{i \ge 1}\) by concatenating copies of the blocks \(A\) and \(B\). By construction, the averages of \(1/p_i\) and \(1/(1-p_i)\) are invariant to the relative frequencies of the blocks, and hence
\begin{align*}
\frac{1}{N} \sum_{i=1}^N \frac{1}{p_i}
\quad \text{and} \quad
\frac{1}{N} \sum_{i=1}^N \frac{1}{1 - p_i}
\end{align*}
converge in probability to fixed constants as \(N \to \infty\). Thus, the resulting design is weakly stable.

In contrast,
\begin{align*}
\frac{1}{N} \sum_{i=1}^N p_i
\end{align*}
depends on the relative proportions of the blocks. In particular, one can construct the sequence recursively as follows: begin with a block of type \(A\), then append copies of block \(B\) until the running average falls below \(1 - \delta/2\); subsequently append copies of block \(A\) until the running average exceeds \(1 - \delta/16\), and continue this procedure iteratively. This yields a sequence for which the running average oscillates between \(1 - \delta/2\) and \(1 - \delta/16\), and hence does not converge, violating condition~\eqref{eqn:extra_assumption}.
 For instance, one may take
\begin{align*}
A = \left\{ \tfrac{1}{11}, \tfrac{1}{6}, \tfrac{1}{6} \right\}, \quad \text{and} \quad
B = \left\{ \tfrac{1}{9}, \tfrac{1}{9}, \tfrac{1}{5} \right\},
\end{align*}
which satisfy the required equalities for the averages of \(1/p\) and \(1/(1-p)\), while having distinct averages of \(p\).

\section{Proofs of Auxiliary Lemmas} \label{sec:proof_auxiliary}
\begin{lemma}\label{lemma: one_lim_pt}
If a sequence $\{x_n\}_{n \geq 1}$ of bounded reals has exactly one limit point $\ell,$ then 
\begin{align*}
\lim_{n \to \infty} x_n = \ell.
\end{align*}
\end{lemma}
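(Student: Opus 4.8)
The plan is to argue by contradiction, using the Bolzano--Weierstrass theorem to manufacture a second limit point whenever convergence fails. Throughout I interpret a \emph{limit point} of the sequence as a subsequential limit, i.e., a real number that is the limit of some subsequence of $\{x_n\}_{n \geq 1}$; this is the natural reading, and the statement is false under the topological ``accumulation point of the range'' reading (a constant sequence has no such point), so the subsequential interpretation is the intended one.

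First I would suppose that $x_n \not\to \ell$. Negating the definition of convergence produces an $\varepsilon > 0$ together with a subsequence $\{x_{n_k}\}_{k \geq 1}$ satisfying $|x_{n_k} - \ell| \geq \varepsilon$ for every $k$. Since $\{x_n\}$ is bounded by hypothesis, this subsequence is itself bounded, so by Bolzano--Weierstrass it admits a further convergent subsequence $\{x_{n_{k_j}}\}_{j \geq 1}$, say with limit $\ell'$. This $\ell'$ is in particular a subsequential limit of the original sequence, hence a limit point of $\{x_n\}$.

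Finally I would pass to the limit in the inequality $|x_{n_{k_j}} - \ell| \geq \varepsilon$ as $j \to \infty$; continuity of the absolute value gives $|\ell' - \ell| \geq \varepsilon > 0$, so $\ell' \neq \ell$. Thus $\{x_n\}$ would possess two distinct limit points $\ell$ and $\ell'$, contradicting the hypothesis that $\ell$ is its \emph{unique} limit point, and therefore $x_n \to \ell$. There is no serious obstacle here; the only point requiring care is to invoke boundedness exactly where it is needed, namely to guarantee via Bolzano--Weierstrass that the offending subsequence has a convergent sub-subsequence, which is what converts ``failure to converge'' into ``existence of a second limit point.''
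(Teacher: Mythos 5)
Your proof is correct and follows essentially the same route as the paper's: contradiction, extraction of a subsequence bounded away from $\ell$, Bolzano--Weierstrass to obtain a convergent sub-subsequence, and the resulting second limit point $\ell'$ with $|\ell' - \ell| \geq \varepsilon$. Your remark clarifying that ``limit point'' must mean subsequential limit is a sensible addition, but the argument itself matches the paper's proof step for step.
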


\begin{proof}
We argue by contradiction. Suppose $\{x_n\}_{n \geq 1}$ does not converge to $\ell$. Then there exists $\varepsilon > 0$ and a subsequence $\{x_{n_k}\}_{k \geq 1}$ such that  
\begin{align*}
|x_{n_k} - \ell| > \varepsilon \quad \text{for all } k \in \mathbb{N}.
\end{align*}
Since $\{x_{n_k}\}_{k \geq 1}$ is bounded, the Bolzano–Weierstrass theorem ensures the existence of a further subsequence $\{x_{n_{k_j}}\}_{j \geq 1}$ converging to some $\ell'$, implying $\ell'$ is a limit point of $\{x_n\}$. However, since $|x_{n_k} - \ell| > \varepsilon$ for all $k$, we must have 
\begin{align*}
|\ell' - \ell| \ge \varepsilon > 0,
\end{align*}
and hence $\ell' \neq \ell$. This contradicts the assumption that $\ell$ is the unique limit point of $\{x_n\}$. Consequently, we conclude that $x_n \to \ell$ as $n \to \infty$.
\end{proof}

\begin{lemma}\label{lemma:cesaro_inprob}
Let $\{a_i\}_{i \geq 1}$ be a sequence of bounded random variables with $a_i \stackrel{\mathbb{P}}{\rightarrow} a^\star$, then the Cesàro mean converges in probability to the same limit i.e. 
\begin{align*}
\frac{1}{n}\sum_{i=1}^n a_i \stackrel{\mathbb{P}}{\rightarrow} a^\star.   
\end{align*}
\end{lemma}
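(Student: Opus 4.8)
The plan is to reduce the probabilistic claim to a statement about $L^1$ convergence and then pass back to convergence in probability via Markov's inequality. Write $Z_i = a_i - a^*$, where $a^*$ is the (non-random) limit; then $Z_i \xrightarrow{p} 0$, and the uniform boundedness hypothesis together with boundedness of the constant $a^*$ gives some $M' < \infty$ with $|Z_i| \le M'$ for all $i$. Since $\tfrac{1}{n}\sum_{i=1}^n a_i - a^* = \tfrac{1}{n}\sum_{i=1}^n Z_i$, it suffices to show $\tfrac{1}{n}\sum_{i=1}^n Z_i \xrightarrow{p} 0$, and for this it is enough to prove $\E\big|\tfrac{1}{n}\sum_{i=1}^n Z_i\big| \to 0$, because $L^1$ convergence implies convergence in probability.

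First I would upgrade $Z_i \xrightarrow{p} 0$ to $L^1$ convergence. Because $\{Z_i\}$ is uniformly bounded it is uniformly integrable, so convergence in probability implies $b_i := \E|Z_i| \to 0$. Next, by the triangle inequality, $\E\big|\tfrac{1}{n}\sum_{i=1}^n Z_i\big| \le \tfrac{1}{n}\sum_{i=1}^n \E|Z_i| = \tfrac{1}{n}\sum_{i=1}^n b_i$, and since $b_i \to 0$ the classical (deterministic) Cesàro mean theorem gives $\tfrac{1}{n}\sum_{i=1}^n b_i \to 0$. This establishes $L^1$ convergence of the Cesàro mean, and hence convergence in probability by Markov's inequality, as required.

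The only delicate point, and the step I expect to be the main obstacle, is the passage from convergence in probability to $L^1$ convergence; this is precisely where the boundedness hypothesis is essential, since without it the conclusion can fail. If one prefers to avoid invoking uniform integrability, the same conclusion follows by a direct truncation argument: fix $\varepsilon > 0$ and bound $\big|\tfrac{1}{n}\sum_{i=1}^n Z_i\big| \le \varepsilon + \tfrac{M'}{n}\sum_{i=1}^n \mathbf{1}_{\{|Z_i| > \varepsilon\}}$; taking expectations and using $\Prob(|Z_i| > \varepsilon) \to 0$ together with the deterministic Cesàro theorem shows $\tfrac{M'}{n}\sum_{i=1}^n \mathbf{1}_{\{|Z_i| > \varepsilon\}} \xrightarrow{p} 0$ by Markov, so the Cesàro mean is eventually within $2\varepsilon$ of zero with probability tending to one; letting $\varepsilon \downarrow 0$ finishes the proof. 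Either route reduces the probabilistic statement to the classical deterministic Cesàro fact, which is the crux of the argument.
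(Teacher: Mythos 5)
Your proposal is correct and takes essentially the same route as the paper's proof: both arguments reduce the claim to showing $\E\left[|a_i - a^*|\right] \to 0$ (you via uniform integrability/Vitali, the paper via an a.s.-convergent subsequence, dominated convergence, and a unique-limit-point lemma), then apply the deterministic Ces\`{a}ro theorem and finish with Markov's inequality. The truncation argument you sketch as a fallback is a valid, more elementary alternative, but your main line of reasoning matches the paper's.
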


\begin{proof}
By Markov’s inequality, for any $\varepsilon > 0$,
\begin{align}
\Prob\!\left( \left| \frac{1}{n} \sum_{i=1}^n a_i - a^\star \right| > \varepsilon \right) 
\leq \frac{1}{\varepsilon} \, \E\!\left[ \left| \frac{1}{n} \sum_{i=1}^n a_i - a^\star \right| \right] 
\leq \frac{1}{n \varepsilon} \sum_{i=1}^n \E\!\left[ |a_i - a^\star| \right].
\label{eqn:markov_bound1}
\end{align}
Since $a_i \stackrel{\mathbb{P}}{\rightarrow} a^\star$, we have $|a_i - a^\star| \stackrel{\mathbb{P}}{\rightarrow} 0$. Moreover, boundedness of $\{a_i\}_{i \geq 1}$ implies the existence of an integrable random variable $X$ such that $|a_i| \leq X$ for all $i$. Hence, $|a_i - a^\star| \leq 2X$ and $\E[|a_i - a^\star|] < \infty$ for every $i$. Now, from $a_i \stackrel{\mathbb{P}}{\rightarrow} a^\star$ we may extract a subsequence $a_{i_j} \xrightarrow{a.s.} a^\star$. Dominated convergence theorem then yields
\begin{align*}
\E[|a_{i_j} - a^\star|] \to 0.    
\end{align*}
Thus $0$ is the only possible subsequential limit of $\{\E[|a_i - a^\star|]\}_{i \geq 1}$, hence Lemma~\ref{lemma: one_lim_pt} implies
\begin{align*}
\E[|a_i - a^\star|] \to 0 \quad \text{as } i \to \infty.
\end{align*}
By the Cesàro mean theorem,
\begin{align}
\frac{1}{n}\sum_{i=1}^n \E[|a_i - a^\star|] \to 0.
\label{eqn:cesaro}
\end{align}
Combining~\eqref{eqn:markov_bound1} with \eqref{eqn:cesaro} gives, as $n \to \infty,$
\begin{align*}
\Prob\!\left( \left| \frac{1}{n} \sum_{i=1}^n a_i - a^\star \right| > \varepsilon \right) \to 0,
\end{align*}
and hence the result follows.
\end{proof}

\begin{lemma}\label{lemma:aibi}
Let $\{a_i\}_{i \ge 1}$ be a sequence of bounded random variables with $a_i \stackrel{\mathbb{P}}{\rightarrow} a^\star$, 
and let $\{b_i\}_{i \ge 1}$ be a sequence of bounded real numbers with 
$\frac{1}{n} \sum_{i=1}^n b_i \to b^\star$ as $n \to \infty$.  
Then the cross-average satisfies
\[
\frac{1}{n} \sum_{i=1}^n a_i b_i \stackrel{\mathbb{P}}{\rightarrow} a^\star b^\star.
\]
\end{lemma}

\begin{proof}
Decompose
\begin{align*}
\frac{1}{n}\sum_{i=1}^n a_i b_i
= \underbrace{\frac{1}{n}\sum_{i=1}^n (a_i - a^\star)\, b_i}_{\mathrm{I}}
+ \underbrace{\frac{a^\star}{n}\sum_{i=1}^n b_i}_{\mathrm{II}}.
\end{align*}
Since \(\frac{1}{n}\sum_{i=1}^n b_i \to b^\star\), Slutsky's theorem implies \(\mathrm{II} \stackrel{\mathbb{P}}{\rightarrow} a^\star b^\star\).  
Thus, it suffices to show that \(\mathrm{I} \stackrel{\mathbb{P}}{\rightarrow} 0\).
Since $\{b_i\}_{i \geq 1}$ is bounded, there exists $L>0$ with $|b_i|\le L$ for all $i$. Fix $\varepsilon>0$ and choose $\delta<\tfrac{\varepsilon}{2L}$. Decompose $\mathrm{I}$ as follows:
\begin{align*}
\mathrm{I} = \underbrace{\frac{1}{n}\sum_{i=1}^n (a_i-a^\star)\,b_i\,\mathbf{1}_{\{|a_i-a^\star|>\delta\}}}_{\mathrm{A}}
+ \underbrace{\frac{1}{n}\sum_{i=1}^n (a_i-a^\star)\,b_i\,\mathbf{1}_{\{|a_i-a^\star|\le\delta\}}}_{\mathrm{B}}.
\end{align*}
For $\mathrm{A}$, boundedness of $b_i$ implies
\begin{align*}
|\mathrm{A}| \le \frac{L}{n}\sum_{i=1}^n |a_i-a^\star|.
\end{align*}
Hence, by Markov’s inequality and result~\eqref{eqn:cesaro}, we have, as \(n \to \infty\),
\begin{align*}
\Prob\left(|\mathrm{A}|>\frac{\varepsilon}{2}\right)\le \frac{2L}{n\varepsilon}\sum_{i=1}^n\E|a_i-a^\star|\to 0.
\end{align*}
For \(\mathrm{B}\), we have \(|\mathrm{B}| \le L \delta < \tfrac{\varepsilon}{2}\), 
so that \(\Prob\big(|\mathrm{B}| > \tfrac{\varepsilon}{2}\big) = 0\).  
Hence, \(\Prob(|\mathrm{I}| > \varepsilon) \to 0\), i.e., \(\mathrm{I} \stackrel{\mathbb{P}}{\rightarrow} 0\).  
Combining this with the limit of \(\mathrm{II}\) gives
\begin{align*}
\frac{1}{n} \sum_{i=1}^n a_i b_i \stackrel{\mathbb{P}}{\rightarrow} a^\star b^\star,
\end{align*}
as desired.
\end{proof}

\begin{lemma}\label{lemma:aiibii}
Let $\{a_i\}_{i \ge 1}$ be a sequence of bounded random variables with 
$\frac{1}{n} \sum_{i=1}^n a_i \stackrel{\mathbb{P}}{\rightarrow} a^\star$, and let $\{b_i\}_{i \ge 1}$ be a sequence of bounded real numbers with 
$\frac{1}{n} \sum_{i=1}^n b_i \to b^\star$ as $n \to \infty$.  
Then the cross-average satisfies 
\[
\frac{1}{n} \sum_{i=1}^n a_i b_i \stackrel{\mathbb{P}}{\rightarrow} a^\star b^\star.
\]
\end{lemma}

\begin{proof}
Following the approach in the proof of Lemma~\ref{lemma:aibi}, write
\[
\frac{1}{n}\sum_{i=1}^n a_i b_i
= \underbrace{\frac{1}{n}\sum_{i=1}^n (a_i-a^\star)b_i}_{\mathrm{I}}
+ \underbrace{\frac{1}{n}\sum_{i=1}^n a^\star b_i}_{\mathrm{II}}.
\]
By Slutsky’s theorem,
\[
\mathrm{II} = \frac{a^\star}{n} \sum_{i=1}^n b_i \stackrel{\mathbb{P}}{\rightarrow} a^\star b^\star.
\]
It remains to show that $\mathrm{I} \stackrel{\mathbb{P}}{\rightarrow} 0$. Since $\{b_i\}_{i\ge1}$ is bounded, say $|b_i| \le L$, we have
\[
|\mathrm{I}| \le L \left| \frac{1}{n} \sum_{i=1}^n (a_i - a^\star) \right|,
\]
and therefore, for any $\varepsilon>0$,
\[
\Prob(|\mathrm{I}|>\varepsilon) \le \Prob\left( \left| \frac{1}{n} \sum_{i=1}^n (a_i - a^\star) \right| > \frac{\varepsilon}{L} \right).
\]
Noting that $\frac{1}{n} \sum_{i=1}^n a_i \stackrel{\mathbb{P}}{\rightarrow} a^\star$, the right-hand side converges to $0$ as $n \to \infty$, establishing $\mathrm{I} \stackrel{\mathbb{P}}{\rightarrow} 0$.  
Hence, the claim follows.
\end{proof}

\section{Stationary distribution of $\{S_k\}_{k \geq 1}$}\label{sec:stat_distn}
Let \( P \) be the transition matrix for the discrete-time Markov chain \( \{D_k\}_{k \geq 1} \) and $\boldsymbol{\pi}$ be the corresponding stationary distribution. The $(i,j)$th entry of \( P \), \( p_{i,j} \) denotes the probability of transitioning from state \( i \) to state \( j \) in a single step. Due to the way the setup is defined, $p_{ij} = 0$ for all $j \in \mathbb{N}$ except $j = i-1$ or $j = i+1$. The balance equations from $\boldsymbol{\pi}^\T P = \boldsymbol{\pi}^\T$ are as follows
\begin{align*}
\pi(n) = 
\begin{cases}
\eta\pi(n-1) + (1-\eta)\pi(n+1) & \text{if } n \leq -2, \\
\eta\pi(-2) + \frac{1}{2}\pi(0) & \text{if } n = -1, \\
\eta\pi(-1) + \eta\pi(1) & \text{if } n = 0, \\
\frac{1}{2}\pi(0) + \eta\pi(2) & \text{if } n = 1, \\
(1-\eta)\pi(n-1) + \eta\pi(n+1) & \text{if } n \geq 2.
\end{cases}
\end{align*}

Solving the above set of equations give
\begin{align*}
    \pi(0) = \frac{2\eta-1}{2\eta},\quad \text{and}\quad \pi(-n) = \pi(n) = \frac{2\eta-1}{4\eta(1-\eta)}\left(\frac{1-\eta}{\eta}\right)^n \text{ for } n \in \mathbb{Z} \setminus
\{0\}.
\end{align*}

\end{document}